\def\@Rref#1{\hbox{\rm \ref{#1}}}
\def\Rref#1{\@Rref{#1}}
\theoremstyle{plain}
\newtheorem{theorem}{Theorem}[section]
\newtheorem{assumption}[theorem]{Assumption}
\newtheorem{lemma}[theorem]{Lemma}
\theoremstyle{definition}
\newtheorem{definition}{Definition}[section]
\newtheorem{remark}[definition]{Remark}
\newcommand{\op}{\mathop{\mathrm{op}}\nolimits}
\newcommand{\cl}{\mathop{\mathrm{cl}}\nolimits}
\newcommand{\bop}{\mathop{\mathbf B}_{\op}\nolimits}
\newcommand{\bcl}{\mathop{\mathbf B}_{\cl}\nolimits}
\newcommand{\floor}{\mathop{\mathrm{floor}}\nolimits}
\begin{document}

\title{Self-triggered Stabilization of Contracting Systems under 
	Quantization}

\thispagestyle{plain}

\author{Masashi Wakaiki}
\address{Graduate School of System Informatics, Kobe University, Nada, Kobe, Hyogo 657-8501, Japan}
 \email{wakaiki@ruby.kobe-u.ac.jp}
 \thanks{This work was supported by JSPS KAKENHI Grant Number JP20K14362.}

\begin{abstract}
We propose self-triggered control schemes for nonlinear systems with
quantized state measurements.
Our focus lies on scenarios where both the controller and
the self-triggering mechanism 
receive only the quantized state at each sampling time.
We assume that the ideal closed-loop system without quantization or self-triggered
sampling is contracting.
Moreover, an upper bound on the 
growth rate of the open-loop system 
is assumed to be known.
We present two control schemes that achieve 
closed-loop stability 
without Zeno behavior.
The first scheme is implemented under 
logarithmic quantization and uses the quantized state
for the threshold in the triggering condition.
The second one is a joint design of zooming quantization and 
self-triggered sampling, where 
the adjustable zoom parameter for quantization changes 
based on inter-sampling times and is also used for the threshold of
self-triggered sampling. In both schemes,
the self-triggering mechanism predicts
the future state 
from the quantized data
for the computation of the next sampling time.
We employ a trajectory-based approach for stability analysis,
where contraction theory plays a key role.
\end{abstract}

\keywords{Contraction theory,
	networked control systems, quantization, self-triggered control.} 

\maketitle

\section{Introduction}
\subsubsection*{Motivation and literature review}
In modern control systems,
shared networks and digital
platforms are commonly used for implementing feedback laws.
To effectively apply control theory in these systems,
it is imperative to consider resource constraints, 
including bandwidth and energy.
Two crucial elements in such resource constraints are quantization and transmission frequency.
Furthermore, it is important to be able to deal with nonlinear dynamics.
In this paper, we address these three aspects in a unified and systematic way.

Quantized control is motivated by numerous applications with limited communication capacity. 
The necessity for quantization also arises   due to 
physical  constraints on sensors and actuators.
It has been shown in \cite{Elia2001} that the coarsest quantization that quadratically 
stabilizes a linear discrete-time system with a single input is logarithmic.
In \cite{Fu2005},
an alternative proof for this result has been provided based on the sector bounded method.
An adaptive control framework for continuous-time 
nonlinear uncertain systems with input
logarithmic quantizers has been developed in \cite{Hayakawa2009}.
Logarithmic quantizers have been applied to various classes of systems such as 
Markov jump time-delay systems \cite{Shen2019} and 
parabolic partial differential equations \cite{Selivanov2016PDE, Kang2023}.
On the other hand, zooming quantizers, i.e., finite-level  quantizers with 
adjustable zoom parameters have been developed for
global asymptotic stabilization of linear systems in \cite{Brockett2000,Liberzon2003}.
This technique has been extended to  
nonlinear systems \cite{Liberzon2003Automatica,Liberzon2005}, switched linear systems
\cite{Liberzon2014, Wakaiki2017TAC}, and so on.
See also the overview \cite{Nair2007} for quantized control.

The implementation of controllers on digital platforms requires 
time-sampling. While periodic sampling is straightforward to apply and is commonly used in control applications, it may lead to resource overconsumption.
The need to use resources efficiently in networked control systems
has motivated the study of event-based aperiodic sampling, which
comprises  two major sub-branches called event-triggered control \cite{Arzen1999,Astrom2002, 
	Tabuada2007,Heemels2008} and
self-triggered control \cite{Velasco2003,Wang2009, Anta2010,Mazo2010}.
In event-triggered control, sensors monitor the measurement data
of the plant continuously or periodically, and send the data 
to the controller only when the triggering condition is satisfied.
To reduce the effort of monitoring  the measurement data,
self-triggering mechanisms (STMs) determine the next sampling time from the  measurement data at the
present 
and past sampling times.

For self-triggered control of nonlinear systems,
various techniques have been introduced, e.g.,
approximation of isochronous manifolds \cite{Anta2012,Delimpaltadakis2021TAC,
	Delimpaltadakis2021},
polynomial approximation of Lyapunov functions \cite{Benedetto2013}, and
reduction of conservativeness by disturbance observers \cite{Tiberi2013}.
Approaches based on the small gain theorem \cite{Tolic2012, Liu_Jiang2015} and 
control Lyapunov functions \cite{Proskurnikov2020} have also been discussed.
In \cite{Theodosis2019},
an updating threshold
strategy  has been developed.  
A dynamic STM has been proposed in \cite{Hertneck2021},
by combining a hybrid Lyapunov function and a dynamic variable
encoding the past system behavior.
Moreover,
self-triggered impulsive control for  nonlinear time-delay systems has been 
applied to a dose-regimen
design in \cite{Aghaeeyan2021}.

Many methods for quantized event-triggered control have been developed;
see, e.g., \cite{Garcia2013, Li2016ETC, Tanwani2016ETC ,Du2017,Liu2018ETC, Abdelrahim2019,Wang2021EJC,Zhao2021_IJRNC, Mazenc2022,
	Fu2022,Borri2022,Liu2023,Borri2024
} and the references therein. 
However, there are relatively small number of approaches available for
quantized self-triggered control.
In \cite{Song2022}, a self-triggered sliding mode control method has been
proposed for permanent magnet synchronous motors with
logarithmic quantization errors.
A zooming quantizer
and an STM have been jointly designed for continuous-time linear 
systems in \cite{Zhou2018STC} and for
discrete-time Takagi-Sugeno fuzzy systems in \cite{Wei2023}. In these studies
\cite{Song2022,Zhou2018STC,Wei2023},
the controller uses the quantized data, whereas the STM relies on 
the unquantized data.
In \cite{Gleizer2020},  input-to-state stability with respect to
bounded disturbances and noise has been achieved by a self-triggered strategy
for continuous-time linear systems.
The co-design of a zooming quantizer
and an STM using quantized measurements has been explored for 
continuous-time input-affine nonlinear systems in \cite{Lou2021}
and for
discrete-time linear systems in \cite{Wakaiki2023, Liu2023Automatica}.

\subsubsection*{Problem description}
This paper addresses the problem of self-triggered stabilization for
nonlinear systems with quantized state measurements.
The state is measured at sampling times determined 
by the STM, and only the quantized state is available to both
the controller and the STM.
Throughout this paper, the measurement error 
refers to the difference between 
the unquantized value of the current state and 
the quantized value of the last sampled state.
The plant input is kept constant between two consecutive sampling times
in a zero-order-hold (ZOH) fashion. 
Fig.~\ref{fig:closed_loop} illustrates the closed-loop system we consider.
The objective of this study is to design strategies for 
quantized  self-triggered control
to ensure closed-loop stability while avoiding Zeno behavior.

We focus particularly 
on contracting systems, i.e., systems
whose flow is an infinitesimally contraction mapping.
Contracting systems have highly-ordered asymptotic property and robustness against disturbances and noise.
Contraction theory was applied to control problems in 
\cite{Lohmiller1998} and has been
the subject of extensive research in control theory since then; see the  overview \cite{Tsukamoto2021}, the book \cite{Bullo2024}, and the references therein.
We employ
the contraction framework
for non-Euclidean norms established in \cite{Davydov2022,Davydov2025}.

We assume that 
the following basic conditions are satisfied on a certain region containing the origin:
\begin{enumerate}
	\renewcommand{\labelenumi}{(\roman{enumi})}
	\item 
	The ideal closed-loop system without quantization or sampling
	is contracting.
	\item
	The growth rate of the open-loop system is bounded by a known constant.
	\item
	The function representing the closed-loop 
	dynamics 
	has a local Lipchitz property with respect to the measurement error.
\end{enumerate}
By conditions (i) and (iii), 
the closed-loop system is input-to-state stable
with respect to the measurement error, which
guarantees that the state converges to the origin
under quantization and self-triggered sampling.
On the other hand,
condition (ii) is used to estimate the magnitude of the
measurement error for the computation of the next sampling time.

\begin{figure}[tb]
	\centering
	\includegraphics[width = 6cm]{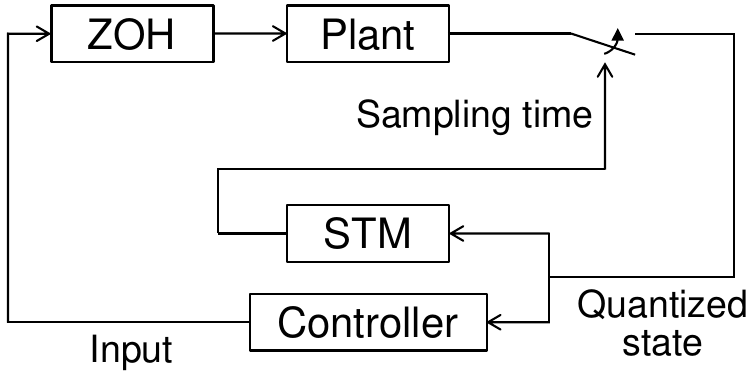}
	\caption{Closed-loop system.}
	\label{fig:closed_loop}
\end{figure}

\subsubsection*{Comparisons and contributions}
Event-triggered 
mechanisms using quantized data have been proposed, e.g.,
in \cite{Garcia2013,Liu2018ETC, Borri2022,Borri2024, Liu2023}.
In these mechanisms, triggering conditions 
are formulated based on the current quantized data being considered for transmission.
However, such data cannot be used in triggering conditions 
for self-triggered control.
To address
the lack of information, the proposed STMs predict the future state
trajectory from the quantized data. Triggering conditions
in the STMs are designed to account for prediction errors
induced by quantization.

Model uncertainties and disturbances have been addressed 
in the existing studies
on self-triggered control for nonlinear systems  \cite{Delimpaltadakis2021, Benedetto2013, Tiberi2013, Tolic2012, Liu_Jiang2015, Hertneck2021}.
On the other hand,
this paper deals with quantization errors within the 
framework 
of self-triggered control.
Quantization errors introduce initial state uncertainty 
in predicting future state trajectories, which
requires a different approach from that used for
model uncertainty and disturbances.
Therefore, the techniques proposed in the aforementioned existing studies 
cannot be directly applied to our problem setting.
Another important distinction is that quantization errors can be
regulated by setting quantizer parameters appropriately.
This motivates the development of interdependent 
design methods for quantization and 
self-triggered sampling.

The proposed design methods of
STMs using quantized measurements are
natural extensions of the discrete-time linear case 
studied in \cite{Wakaiki2023}.
In the previous study \cite{Lou2021} 
for continuous-time  input-affine nonlinear systems,
a zooming quantizer whose quantization center 
is the estimate of the plant state has been applied.
In contrast,
the quantization center of 
the zooming quantizer we use is the origin.
This eliminates the need for state estimation
at the sensor and hence 
reduces the computational burden 
on the sensor.

First, we study
self-triggered control with logarithmic quantization.
The main objective of the STM is to check whether
the measurement error exceeds a threshold like standard triggering mechanisms.
To this end, the STM also monitors whether the predicted state leaves 
the region where an upper bound on the
growth rate of the open-loop system is known.
To exploit local properties of the system behavior,
the norm of the initial state is assumed to be upper-bounded by a given constant.
Then we show that 
when the quantization density and 
the threshold parameter of self-triggered sampling
satisfy suitable conditions,
the closed-loop system has the following two properties: 
(i) inter-sampling times are uniformly lower-bounded by a strictly positive constant.
(ii) the state norm decreases monotonically and exponentially.

Our second contribution is to propose
a joint design method of a zooming quantizer and an STM.
The STM checks the conditions on the measurement error and the predicted
state as in the logarithmic quantization case. The major difference is that
the zooming quantizer and the STM are updated in
relation to each other.
The sampling times computed by 
the STM are aperiodic but belong to the discrete set $\{
p h : p = 1,2,3,\dots
\}$ for some period $h\in\mathbb{R}_{>0}$
as in
periodic event-triggered control systems studied, e.g., in 
\cite{Proskurnikov2020,
	Zhao2021_IJRNC, Fu2022,
	Heemels2013, Wang2019}.
Therefore, 
Zeno behavior does not occur.
However, the period $h$ for
the STM has to be small enough to guarantee the state convergence.
We provide a sufficient condition for stabilization, which
is described by inequalities with respect to the parameters for 
the range and step of quantization and for
the threshold and period of self-triggered sampling.

\subsubsection*{Paper organization}
The rest of this article is as follows. 
In Section~\ref{sec:basic_assump},
basic assumptions on the nonlinear system we consider are discussed. Then we
present preliminary results on the open-loop dynamics in Section~\ref{sec:open_loop_dynamics}.
In Section~\ref{sec:log_case},
we propose a self-triggered stabilization scheme incorporating logarithmic quantization.
Section~\ref{sec:zoom_case} is devoted to the problem of
jointly designing a zooming quantizer and an STM.
In Section~\ref{sec:lure}, 
the assumptions of the proposed methods
are examined for Lur'e systems.
We give a numerical example in
Section~\ref{sec:simulation}. Finally,
Section~\ref{sec:conclusion} concludes this article.

\subsubsection*{Notation}
If a function $f \colon \mathbb{R}^n \to \mathbb{R}^n$ is 
differentiable at $x \in \mathbb{R}^n$, then
we denote the Jacobian matrix of $f$ at $x$ by 
$Df(x)$.
For a matrix $A \in \mathbb{R}^{n \times n}$ with 
$(i,j)$-entry $A_{ij}$, 
the $(i,j)$-entry  of
the Metzler majorant $\lceil A\rceil_{\mathrm{Mzr}} \in \mathbb{R}^{n \times n}$ 
of $A$ is defined by
\[
(\lceil A\rceil_{\mathrm{Mzr}})_{ij} \coloneqq 
\begin{cases}
	A_{ij} & \text{if $i=j$} \\
	|A_{ij}| & \text{if $i\not=j$}. 
\end{cases}
\] 
The elementwise inverse of a vector $\theta =
[	\theta_{1}~
\cdots ~
\theta_{n}
]^{\top} \in \mathbb{R}^n_{>0}
$ is denoted by $\theta^{-1}$, i.e.,
$
\theta^{-1} \coloneqq 
[	\theta_1^{-1}~
\cdots ~
\theta_n^{-1}
]^{\top} 
$.
For a vector $\theta \in \mathbb{R}^n$, let  $[\theta] \in \mathbb{R}^{n \times n}$
denote
the diagonal matrix whose $i$-th diagonal element is equal to 
the $i$-th element of $\theta$.
When $\theta \in \mathbb{R}^n_{>0}$ and $r \in [1,\infty]$, 
we define the $\theta$-diagonally-weighted $r$-norm on $\mathbb{R}^n$ by
\[
\|x\|_{r,[\theta] } \coloneqq \big\|[\theta]  x \big\|_{r},\quad x \in \mathbb{R}^n.
\]
Let $\|\cdot\|_*$ be a norm on $\mathbb{R}^n$ and
its corresponding induced norm on $\mathbb{R}^{n \times n}$.
The logarithmic norm of a matrix $A \in \mathbb{R}^{n \times n}$
with respect to the norm $\|\cdot\|_*$ is defined by
\[
\mu_{*}(A) \coloneqq \lim_{\varepsilon  \to 0+} \frac{\|I + \varepsilon A\|_* - 1}{\varepsilon }.
\]
Let $\mathbf{B}_*(R)$ be the open ball of radius $R \in \mathbb{R}_{>0} \cup \{\infty \}$ centered at $0$
with respect to the norm $\|\cdot\|_*$, i.e.,
\[
\mathbf{B}_*(R) \coloneqq 
\begin{cases}
	\{
	x \in \mathbb{R}^n :\|x\|_* < R
	\} & \text{if $0< R < \infty$}\\
	\mathbb{R}^n & \text{if $R = \infty$},
\end{cases}
\]
and 
let $\overline{\mathbf{B}}_*(R)$
denote the closure of $\mathbf{B}_*(R)$.

\section{Basic assumptions and system properties}
\label{sec:basic_assump}
In this section, we first discuss basic assumptions on the closed-loop system and the open-loop system. Then we present useful results 
obtained under those assumptions.
These results give
growth and decay properties of the systems, which will be used for
the design of STMs and the stability analysis of quantized self-triggered control systems.

\subsection{Assumptions on nonlinear systems}
\label{sec:nonlinear_assump}
Throughout this paper,
let  $f \colon \mathbb{R}^n \times \mathbb{R}^m \to \mathbb{R}^n$ and $g \colon \mathbb{R}^n \to \mathbb{R}^m$ be continuous functions
satisfying
$f(0,g(0)) = 0$.
Consider the ordinary differential equation (ODE)
\[
\dot x(t) = f\big(x(t),u(t)\big),\quad t \in \mathbb{R}_{\geq 0};
\qquad x(0) = x_0 \in \mathbb{R}^n,
\]
where $x(t) \in \mathbb{R}^n$ and $u(t) \in \mathbb{R}^m$
are the state and the input of the nonlinear system at time $t$, respectively.
In the ideal case without quantization or self-triggered sampling,
the input $u$ is generated by the following controller:
\[
u(t) = g\big(x(t)\big),\quad t \in \mathbb{R}_{\geq 0}.
\]

For $
x,e \in \mathbb{R}^n$, we define
\[
F (x,e) \coloneqq f\big(x,g(x+e)\big) \quad \text{and} \quad 
F_{0} (x) \coloneqq F(x,0).
\]
The ideal closed-loop dynamics can be written as the ODE $\dot x = F_0(x)$.
First, we make the following assumption on $F_0$ to ensure 
the contractivity of 
the ideal closed-loop system.
\begin{assumption}
	\label{assump:closed}
	There exist
	a norm $\|\cdot\|_{\cl}$ on $\mathbb{R}^n$ and
	constants $c\in \mathbb{R}_{>0}$ and $R_1 \in \mathbb{R}_{>0} \cup \{\infty \}$ such that
	\begin{enumerate}
		\renewcommand{\labelenumi}{(\roman{enumi})}
		\item  $F_0$ is locally Lipschitz continuous on $\bcl(R_1)$; and \vspace{2pt}
		\item $\mu_{\cl}( DF_0(x)) \leq -c$ for almost all $x \in \bcl(R_1)$.
	\end{enumerate}
\end{assumption}

Note that 
if $F_0$ is locally Lipschitz continuous on $\bcl(R_1)$,
then $F_0$ is differentiable almost everywhere on $\bcl(R_1)$ by
Rademacher’s theorem; see, e.g., \cite[Theorem~6.15]{Heinonen2001}.

For a fixed $q \in \mathbb{R}^n$,
we set
\[
f_q(x) \coloneqq f\big(x,g(q)\big),\quad x \in \mathbb{R}^n.
\]
When the input $u$ is a constant function such that $u(t) \equiv g(q)$ for some 
$q \in \mathbb{R}^n$, we can write 
the open-loop dynamics as
the ODE $\dot x = f_q(x)$.
Next, we assume that $f_q$ satisfies the following properties, which give
an upper bound on the growth rate
of the open-loop system.
\begin{assumption}
	\label{assump:open}
	There exist
	a norm $\|\cdot\|_{\op}$ on $\mathbb{R}^n$ 
	and constants $d_1\in \mathbb{R}_{\geq 0}$,
	$d_2 \in \mathbb{R}_{>0}$, and $R_2 \in \mathbb{R}_{>0} \cup \{\infty \}$ 
	such that
	\begin{enumerate}
		\renewcommand{\labelenumi}{(\roman{enumi})}
		\item
		$f_q$ is 
		continuously differentiable on 
		$\bop(R_2)$ for all $q \in \bop(R_2)$; \vspace{2pt}
		\item $\mu_{\op}( Df_q(x))  \leq d_1 $ for all $x,q \in \bop(R_2)$; and \vspace{2pt}
		\item 
		$
		\|f_q(q) \|_{\op} \leq d_2 \|q\|_{\op}
		$
		for all $q \in \bop(R_2)$.
	\end{enumerate}
\end{assumption}

Note that the norm $\|\cdot\|_{\op}$ in Assumption~\ref{assump:open}
might differ from the norm $\|\cdot\|_{\cl}$ in 
Assumption~\ref{assump:closed}.
The norm $\|\cdot\|_{\cl}$ is applied for the stability analysis of quantized self-triggered
control systems, whereas 
the proposed STMs estimate the magnitude of the measurement error
by using  the norm $\|\cdot\|_{\op}$.

For the norms $\|\cdot\|_{\cl}$ and $\|\cdot\|_{\op}$ in 
Assumptions~\ref{assump:closed} and \ref{assump:open},
let a constant $\Gamma \in \mathbb{R}_{>0}$ satisfy 
\[
\|x\|_{\op} \leq \Gamma \|x\|_{\cl}
\]
for all $x \in \mathbb{R}^n$.
Such a constant exists, since any two norms on $\mathbb{R}^n$
are equivalent.
Put 
\[
R \coloneqq \min \left\{R_1,\,\frac{R_2}{\Gamma} \right\}.
\]
Then 
\[
\bcl(R) \subseteq \bcl(R_1) \cap \bop(R_2).
\]

Finally, we assume that the function 
$F(x,e)$ has a certain local Lipschitz property in 
the second argument $e$. 
This Lipschitz property will be used to obtain an upper bound on
the effect of the measurement error.
\begin{assumption}
	\label{assump:control_part}
	There exist constants $\alpha,\sigma_0  \in \mathbb{R}_{>0}$ such that 
	\begin{equation}
		\label{eq:F_e_Lip}
		\|F(x,e) - F(x,0)\|_{\cl}\leq \alpha \|e\|_{\op}
	\end{equation}
	for all $x\in \bcl(R)$ and $e \in \bop(\sigma_0 R)$.
\end{assumption}

We will 
consider state trajectories staring from
$\bcl(R)$ or a smaller ball 
in the stability analysis of quantized self-triggered control systems.
This condition on initial states is generally restrictive, and
if the initial state is outside of the ball, then
we have to drive the state into the ball by existing control methods without self-triggered sampling before applying the proposed methods.
We also make the following comment on
the selection of the norms $\|\cdot\|_{\cl}$ and $\|\cdot\|_{\op}$.
\begin{remark}[Selection of norms]
	When designing a quantizer and an STM,
	it is important to select the norms $\|\cdot\|_{\cl}$ and $\|\cdot\|_{\op}$
	so that the constants $-c$  and $d_1$ are small.
	For nonlinear systems in Lur'e form, design methods of
	such norms have been presented
	with respect to 
	weighted $2$-norms  in \cite[Section~3.7]{Bullo2024} 
	and with respect to 
	diagonally-weighted $1$-norms and $\infty$-norms in \cite[Section~VI.D]{Davydov2025}.
	We will explain
	the case of diagonally-weighted  $\infty$-norms
	in Section~\ref{sec:lure}.
	For general nonlinear systems, a typical method is to linearize
	the nonlinear system. Then,  design methods
	of norms in the linear case, which are
	summarized in
	\cite[Section~2.7]{Bullo2024}, can be applied.
	\hspace*{\fill} $\triangle$ 
\end{remark}

\subsection{Growth and decay properties}
Using the contraction theory,
we see that the closed-loop system has an
input-to-state stability property 
with respect to the measurement error
under Assumptions~\ref{assump:closed} and \ref{assump:control_part}.
Assumption~\ref{assump:open} implies
an incremental property  of the open-loop system, specifically,
an upper bound on the growth rate of
the difference between two trajectories.
In fact,
the conditions on the logarithmic norms  in 
Assumptions~\ref{assump:closed} and \ref{assump:open}
can be converted to
those on the minimal one-sided Lipschitz constants; see
\cite[Theorem~17]{Davydov2025}. Therefore,
by slightly modifying
\cite[Theorems~31 and 37]{Davydov2022},
we can obtain the following two results.
\begin{theorem}
	\label{thm:basic_prop_of_contraction3}
	Suppose that Assumptions~\ref{assump:closed} and \ref{assump:control_part} 
	hold. Let $\tau \in \mathbb{R}_{>0}$, and let $e\colon
	[0,\tau) \to \bop(\sigma_0 R)$ be continuous.
	If the ODE
	$\dot x = F(x,e)$ with $x(0) = x_0\in \bcl(R)$ has a 
	solution $x$ on $[0,\tau]$ such that $x(t) \in \bcl(R)$ for all $t \in [0,\tau)$,
	then
	\[
	\|x(t)\|_{\cl} \leq e^{-ct} \|x_0\|_{\cl} + 
	\frac{\alpha (1-e^{-ct})}{c} \sup_{0\leq s < \tau }
	\|e(s)\|_{\op}
	\]
	for all $t \in [0,\tau]$.
\end{theorem}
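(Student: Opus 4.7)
The plan is to view $\dot x = F(x,e)$ as a perturbation of the contracting unforced flow $\dot y = F_0(y)$ and to compare $x$ against the trivial trajectory. Because $F_0(0)=f(0,g(0))=0$, the constant $y\equiv 0$ solves $\dot y = F_0(y)$, so the incremental distance between $x$ and this reference coincides with $\|x(t)\|_{\cl}$; no auxiliary comparison solution needs to be constructed.

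The first step is to decompose the dynamics as $\dot x(t) = F_0(x(t)) + w(t)$, where $w(t)\coloneqq F(x(t),e(t)) - F_0(x(t))$. Because $x(t)\in\bcl(R)$ and $e(t)\in\bop(\sigma_0 R)$ for $t\in[0,\tau)$, Assumption~\ref{assump:control_part} gives the pointwise estimate $\|w(t)\|_{\cl}\leq \alpha\|e(t)\|_{\op}$, and $w$ is continuous on $[0,\tau)$.

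The second step is to promote the logarithmic-norm inequality $\mu_{\cl}(DF_0(x))\leq -c$ from Assumption~\ref{assump:closed}(ii) into an everywhere-pointwise one-sided Lipschitz inequality for $F_0$ on $\bcl(R)$, via the equivalence recalled as \cite[Theorem~17]{Davydov2025}. Combined with the standard weak-pairing characterization of the upper-right Dini derivative of $t\mapsto \|x(t)-y(t)\|_{\cl}$ specialised to $y\equiv 0$, this yields
\[
D^+\|x(t)\|_{\cl} \leq -c\,\|x(t)\|_{\cl} + \|w(t)\|_{\cl} \leq -c\,\|x(t)\|_{\cl} + \alpha\,\|e(t)\|_{\op}
\]
for almost every $t\in[0,\tau)$. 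Multiplying by the integrating factor $e^{ct}$, integrating from $0$ to $t$, pulling $\sup_{0\leq s<\tau}\|e(s)\|_{\op}$ out of the integral, and evaluating $\int_0^t e^{-c(t-s)}\,ds = (1-e^{-ct})/c$ produces the stated bound for $t\in[0,\tau)$; the extension to $t=\tau$ follows by continuity of $x$.

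The main technical obstacle is reconciling the only almost-everywhere differentiability of $F_0$ (granted by Rademacher's theorem, as noted after Assumption~\ref{assump:closed}) with the need for a pointwise comparison argument. The one-sided Lipschitz reformulation is precisely what resolves this: it converts the a.e.\ Jacobian condition into a universally valid pointwise inequality on $\bcl(R)$, after which the contraction-perturbation estimate is a standard Gr\"onwall argument. This is the essence of the ``slight modification'' of \cite[Theorems~31 and 37]{Davydov2022} alluded to in the statement.
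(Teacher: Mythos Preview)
Your proposal is correct and follows essentially the same route the paper indicates. The paper does not write out a detailed proof but states that the result is obtained by converting the logarithmic-norm condition in Assumption~\ref{assump:closed} into a one-sided Lipschitz condition via \cite[Theorem~17]{Davydov2025} and then applying a slight modification of \cite[Theorems~31 and~37]{Davydov2022}; this is exactly the perturbed-contraction Gr\"onwall argument you describe, with $y\equiv 0$ serving as the reference trajectory because $F_0(0)=0$.
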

\begin{theorem}
	\label{thm:basic_prop_of_contraction2}
	Suppose that Assumption~\ref{assump:open} holds.
	Let $\tau \in \mathbb{R}_{>0}$ and $q \in \bop(R_2)$.
	Assume that for each $i=1,2$, 
	the ODE $\dot x_i = f_q(x_i)$
	with $x_i(0) = x_{i,0} \in \bop(R_2)$ 
	has a solution $x_i$ on $[0,\tau)$
	such that
	$x_i(t) \in \bop(R_2)$ for all  $t \in [0,\tau)$. Then
	\[
	\|x_1(t)-x_2(t)\|_{\op} \leq e^{d_1t} \|x_{1,0} - x_{2,0}\|_{\op}
	\]
	for all  $t \in [0,\tau)$.
\end{theorem}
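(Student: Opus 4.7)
The plan is to reduce the statement to a standard Coppel-type differential inequality on $V(t) := \|x_1(t) - x_2(t)\|_{\op}$, for which Grönwall's lemma then yields the exponential bound. The contracting framework is exactly what converts the pointwise logarithmic-norm bound in Assumption~\ref{assump:open}(ii) into an incremental growth bound for any pair of trajectories sharing the same constant input $g(q)$.

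First I would set $z(t) := x_1(t) - x_2(t)$. Since $\bop(R_2)$ is convex (being an open ball in a norm) and $x_1(t), x_2(t) \in \bop(R_2)$ for each $t \in [0,\tau)$, the segment $\{x_2(t) + s\,z(t) : s \in [0,1]\}$ lies in $\bop(R_2)$, where $f_q$ is $C^1$ by Assumption~\ref{assump:open}(i). Applying the fundamental theorem of calculus to $s \mapsto f_q(x_2(t) + s z(t))$ gives the integral mean-value representation
\[
\dot{z}(t) = f_q(x_1(t)) - f_q(x_2(t)) = A(t)\,z(t), \qquad A(t) := \int_0^1 Df_q\big(x_2(t) + s\,z(t)\big)\,ds,
\]
and continuity of $Df_q$ on $\bop(R_2)$ makes $A$ continuous on $[0,\tau)$.

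Next, $\mu_{\op}(\cdot)$ is sublinear (positively homogeneous and subadditive, as the right-derivative at the identity of the convex induced norm), so from Assumption~\ref{assump:open}(ii)
\[
\mu_{\op}(A(t)) \leq \int_0^1 \mu_{\op}\big(Df_q(x_2(t)+s\,z(t))\big)\,ds \leq d_1.
\]
Combining this with the standard identity $D^+\|w(t)\|_{\op} \leq \mu_{\op}(A(t))\,\|w(t)\|_{\op}$ valid for every solution of a linear ODE $\dot w = A(t)w$ (proved directly from the definition of $\mu_{\op}$ by expanding $\|w(t+h)\|_{\op} = \|(I + hA(t))w(t)\|_{\op} + o(h)$), I obtain $D^+V(t) \leq d_1 V(t)$ on $[0,\tau)$. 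A comparison lemma for Dini derivatives then delivers $V(t) \leq e^{d_1 t} V(0)$, which is the claim. The alternative route hinted at by the authors---invoking \cite[Theorem~17]{Davydov2025} to convert the $\mu_{\op}$-bound into a one-sided Lipschitz constant for $f_q$ on the convex set $\bop(R_2)$---gives the Dini inequality directly without introducing $A(t)$, and is essentially the modification of \cite[Theorems~31 and 37]{Davydov2022} referenced just above.

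The main obstacle is purely the non-Euclidean nature of $\|\cdot\|_{\op}$: one cannot differentiate $V(t)^2$ as in the Hilbert case, so the argument has to be phrased in terms of the upper-right Dini derivative of the norm together with the defining property of $\mu_{\op}$. Everything else---convexity of $\bop(R_2)$, the integral mean-value formula, sublinearity of $\mu_{\op}$, and the final Grönwall step---is routine once that device is in hand.
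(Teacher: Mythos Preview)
Your argument is correct and is precisely the standard Coppel-type derivation underlying the references the paper invokes; the paper itself gives no detailed proof but defers to \cite[Theorems~31 and 37]{Davydov2022} together with the logarithmic-norm/one-sided-Lipschitz equivalence of \cite[Theorem~17]{Davydov2025}, and your write-up (including your remark on the alternative one-sided Lipschitz route) unpacks exactly that machinery.
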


\section{Preliminary results on open-loop dynamics}
\label{sec:open_loop_dynamics}
Let $q \in \bop(R_2)$, and consider the ODE
\begin{equation}
	\label{eq:x_q_ODE}
	\dot x_q(t) = f_q\big(x_q(t)\big) = f\big(x_q(t),g(q) \big),\quad x_q(0) =q.
\end{equation}
We will use the ODE \eqref{eq:x_q_ODE} in order for 
the proposed STMs to predict the state trajectory on sampling intervals.
In this setting, the initial value $q$ is the quantized state at a sampling time.
For the design of STMs, here we present three properties of the ODE \eqref{eq:x_q_ODE}.

The first lemma is used to obtain an upper bound of the measurement error.
It can be proved easily but is the basis for the 
design of STMs.
\begin{lemma}
	\label{lem:x_q_diff}
	Suppose that Assumption~\ref{assump:open} holds.
	Let $\tau \in \mathbb{R}_{>0}$ and 
	$q,x_0 \in \bop(R_2)$.
	Assume that the ODEs
	\eqref{eq:x_q_ODE}
	and 
	\[
	\dot x(t) = f\big(x(t),g(q)\big),\quad x(0) = x_0
	\] 
	have solutions
	$x_q$ and $x$ on $[0,\tau)$, respectively.
	If $x_q(t),x(t)  \in \bop(R_2)$ for all $t \in [0,\tau )$, then
	\begin{equation}
		\label{eq:x_q_diff}
		\| x(t) - q \|_{\op} \leq 
		e^{d_1 t} \|x_0 - q\|_{\op}  + 
		\|x_{q}(t) - q \|_{\op}
	\end{equation}
	for all $t \in [0,\tau )$.
\end{lemma}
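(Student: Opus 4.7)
The plan is to derive inequality \eqref{eq:x_q_diff} by combining the triangle inequality with the incremental growth estimate from Theorem~\ref{thm:basic_prop_of_contraction2}. Specifically, I would first add and subtract $x_q(t)$ inside the norm to obtain
\[
\|x(t) - q\|_{\op} \leq \|x(t) - x_q(t)\|_{\op} + \|x_q(t) - q\|_{\op},
\]
so the only quantity that needs estimating is $\|x(t) - x_q(t)\|_{\op}$.

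Next, I would observe that $x$ and $x_q$ are two solutions of the \emph{same} autonomous open-loop ODE $\dot y = f_q(y)$, driven by the common constant input $g(q)$, but with different initial conditions $x_0$ and $q$. Since $q \in \bop(R_2)$ by hypothesis, and since $x_q(t), x(t) \in \bop(R_2)$ for all $t \in [0,\tau)$, the hypotheses of Theorem~\ref{thm:basic_prop_of_contraction2} are met with $x_1 := x$, $x_2 := x_q$, $x_{1,0} := x_0$, $x_{2,0} := q$. Applying that theorem immediately yields
\[
\|x(t) - x_q(t)\|_{\op} \leq e^{d_1 t}\|x_0 - q\|_{\op}
\]
for every $t \in [0,\tau)$, using $x_q(0) = q$.

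Substituting this bound into the triangle inequality above gives \eqref{eq:x_q_diff}, completing the argument.

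There is no real obstacle here: the statement is essentially a bookkeeping combination of two facts, the triangle inequality and the incremental bound already established in the preceding section. The only small check worth doing carefully is confirming that the regularity assumption (i) of Assumption~\ref{assump:open} together with the containment $x, x_q \in \bop(R_2)$ is exactly what Theorem~\ref{thm:basic_prop_of_contraction2} requires, so that the application is valid on the whole interval $[0,\tau)$ rather than merely on a subinterval.
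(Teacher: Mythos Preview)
Your proposal is correct and follows exactly the same approach as the paper: split $\|x(t)-q\|_{\op}$ via the triangle inequality into $\|x(t)-x_q(t)\|_{\op}+\|x_q(t)-q\|_{\op}$, then bound the first term by $e^{d_1 t}\|x_0-q\|_{\op}$ using Theorem~\ref{thm:basic_prop_of_contraction2}. There is nothing to add.
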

\begin{proof}
	By the triangle inequality, we have 
	\begin{align*}
		\| x(t) - q\|_{\op}  \leq 
		\| x(t) -  x_{q}(t) \|_{\op} + 
		\| x_{q}(t) - q \|_{\op}.
	\end{align*}
	Applying Theorem~\ref{thm:basic_prop_of_contraction2} 
	to $\| x(t) -  x_{q}(t) \|_{\op}$,
	we immediately obtain the desired conclusion.
\end{proof}

Next, we derive an upper bound on
the difference between the solution $x_q(t)$ of the ODE \eqref{eq:x_q_ODE} and the initial value $q$. 
\begin{lemma}
	\label{lem:is_diff}
	Suppose that Assumption~\ref{assump:open} holds.
	Let $\tau \in \mathbb{R}_{>0}$  and 
	$q \in \bop(R_2)$.
	Assume that 
	the ODE \eqref{eq:x_q_ODE} has a solution $x_q$
	on $[0,\tau)$ such that
	$x_q(t) \in \bop(R_2)$ for all $t \in [0,\tau)$. Then
	\begin{equation}
		\label{eq:xq_q_bound}
		\|x_q(t) - q 
		\|_{\op} \leq \nu(t) \|q\|_{\op} 
	\end{equation}
	for all $t \in [0,\tau)$, where $\nu(t)$ is defined by
	\begin{equation}
		\label{eq:nu_def}
		\nu(t) \coloneqq 
		d_2 \int_0^t e^{d_1s} ds
		=
		\begin{cases}
			\dfrac{d_2(e^{d_1 t} - 1)}{d_1} & \text{if $d_1 \not=0$} \vspace{5pt}\\
			d_2 t & \text{if $d_1 =0$}
		\end{cases}
	\end{equation}
	for $t \in \mathbb{R}_{\geq 0}$.
\end{lemma}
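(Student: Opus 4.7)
The plan is to bound the velocity $\dot x_q(t) = f_q(x_q(t))$ in the $\|\cdot\|_{\op}$ norm and then integrate. Once I have an estimate of the form $\|f_q(x_q(t))\|_{\op} \leq d_2 e^{d_1 t}\|q\|_{\op}$, the bound \eqref{eq:xq_q_bound} follows immediately from
\[
\|x_q(t) - q\|_{\op} = \left\| \int_0^t f_q(x_q(s))\,ds \right\|_{\op} \leq \int_0^t \|f_q(x_q(s))\|_{\op}\,ds,
\]
combined with the definition of $\nu(t)$ in \eqref{eq:nu_def}.

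The clean way to obtain the velocity bound is to reuse Theorem~\ref{thm:basic_prop_of_contraction2}. For any sufficiently small $\varepsilon > 0$ such that $t + \varepsilon < \tau$, consider the two trajectories $t \mapsto x_q(t)$ and $t \mapsto x_q(t+\varepsilon)$. Both satisfy the ODE $\dot x = f_q(x)$ on $[0,\tau - \varepsilon)$, remain in $\bop(R_2)$ by hypothesis, and differ only in their initial conditions (which are $q$ and $x_q(\varepsilon)$, respectively). Theorem~\ref{thm:basic_prop_of_contraction2} therefore yields
\[
\|x_q(t+\varepsilon) - x_q(t)\|_{\op} \leq e^{d_1 t} \|x_q(\varepsilon) - q\|_{\op}.
\]
Dividing by $\varepsilon$ and sending $\varepsilon \to 0^+$, and using that $x_q$ is $C^1$ (which is guaranteed by the continuous differentiability of $f_q$ in Assumption~\ref{assump:open}(i)) so that both difference quotients converge in $\|\cdot\|_{\op}$, I obtain
\[
\|f_q(x_q(t))\|_{\op} = \|\dot x_q(t)\|_{\op} \leq e^{d_1 t}\|\dot x_q(0)\|_{\op} = e^{d_1 t}\|f_q(q)\|_{\op}.
\]
Applying Assumption~\ref{assump:open}(iii) then gives the velocity estimate $\|f_q(x_q(t))\|_{\op} \leq d_2 e^{d_1 t}\|q\|_{\op}$, and integrating closes the argument.

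The only mildly delicate point is justifying the passage to the limit $\varepsilon \to 0^+$ in the $\|\cdot\|_{\op}$ norm, but this is routine: $f_q$ is continuous and $x_q \in C^1$, so $\dot x_q$ is continuous on $[0,\tau)$, and continuity of the norm together with the uniform continuity of $\dot x_q$ on compact subintervals makes the limit legitimate. Everything else is essentially bookkeeping, so there is no serious obstacle; the content of the lemma comes entirely from Theorem~\ref{thm:basic_prop_of_contraction2} applied to the time-shifted trajectory and from Assumption~\ref{assump:open}(iii) used at the initial time.
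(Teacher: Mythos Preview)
Your proof is correct and follows essentially the same approach as the paper: bound the velocity by $\|f_q(x_q(t))\|_{\op}\le e^{d_1t}\|f_q(q)\|_{\op}\le d_2e^{d_1t}\|q\|_{\op}$ and then integrate. The only difference is cosmetic: the paper obtains the velocity bound by citing \cite[Theorem~3.9.(ii)]{Bullo2024}, whereas you derive it in-house from Theorem~\ref{thm:basic_prop_of_contraction2} via the standard time-shift argument, which makes your version slightly more self-contained.
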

\begin{proof}
	By modifying \cite[Theorem~3.9.(ii)]{Bullo2024} slightly,
	we have
	\[
	\big\|f_q\big(x_q(s)\big) \big\|_{\op} 
	\leq e^{d_1s} \big\|f_q\big(x_q(0) \big) \big\|_{\op}
	\]
	for all $s \in [0,\tau)$.
	Since $x_q(0) = q$, it follows from Assumption~\ref{assump:open} that
	\[
	\big\|f_q\big(x_q(s)\big)\big\|_{\op} \leq 
	e^{d_1s} \|f_q(q)\|_{\op} \leq d_2e^{d_1s} \|q\|_{\op}
	\]
	for all $s \in [0,\tau)$.
	Thus,
	\begin{align*}
		\|x_q(t) - q 
		\|_{\op} 
		\leq 
		\int_{0}^t \left\|f_q\big(x_q(s) \big) \right\|_{\op} ds
		\leq 
		d_2 \int_0^t e^{d_1s} ds\|q\|_{\op} 
	\end{align*}
	is obtained for all $t \in [0,\tau)$.
\end{proof}

Let $\lambda \in (0,1)$.
Finally, we give a time period on which
all trajectories starting in $\bop(\lambda R_2)$
are guaranteed to stay in $\bop(R_2)$.
Denote by $\widetilde\tau_{\min} = \widetilde\tau_{\min}(\lambda)$ the solution of the equation
$
\lambda \left(
1+ \nu(t)
\right) = 1,
$
i.e.,
\begin{align}
	\label{eq:tilde_tau_def}
	\widetilde\tau_{\min}(\lambda) \coloneqq
	\begin{cases}
		\dfrac{1}{d_1} \log\left(
		1 + \dfrac{d_1(1-\lambda)}{d_2 \lambda} 
		\right) & \text{if $d_1 \not=0$} \vspace{5pt}\\
		\dfrac{1-\lambda}{d_2 \lambda} & \text{if $d_1 = 0$}.
	\end{cases}
\end{align}
\begin{lemma}
	\label{lem:open_loop_bound}
	Suppose that Assumption~\ref{assump:open} holds,
	and let $\lambda \in (0,1)$ and $q \in \bop(\lambda R_2)$.
	Then
	there exists a unique solution $x_q$
	of the ODE \eqref{eq:x_q_ODE} on 
	$[0, \widetilde \tau_{\min}]$.
	Moreover, $x_q$ satisfies
	\begin{equation}
		\label{eq:x_q_bound}
		x_q(t) \in \bop(R_2)
	\end{equation}
	for all $t \in
	[0, \widetilde \tau_{\min}]$.
\end{lemma}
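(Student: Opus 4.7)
The plan is to combine a standard ODE continuation argument with the a-priori bound provided by Lemma \ref{lem:is_diff}. Since $q \in \bop(\lambda R_2) \subseteq \bop(R_2)$ and $f_q$ is continuously differentiable on $\bop(R_2)$ by Assumption \ref{assump:open}(i), the Picard–Lindel\"of theorem supplies a unique local $C^1$ solution of \eqref{eq:x_q_ODE} in a neighborhood of $t=0$. Let $\tau_{\max}\in (0,\infty]$ be the supremum of all $\tau>0$ for which \eqref{eq:x_q_ODE} admits a unique solution $x_q$ on $[0,\tau)$ with $x_q(t)\in\bop(R_2)$ for every $t\in[0,\tau)$. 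The goal reduces to showing $\tau_{\max} > \widetilde\tau_{\min}$.

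For any $\tau\in(0,\tau_{\max})$, the solution $x_q$ satisfies the hypotheses of Lemma \ref{lem:is_diff}, so by the triangle inequality,
\[
\|x_q(t)\|_{\op} \leq \|x_q(t)-q\|_{\op} + \|q\|_{\op} \leq \big(1+\nu(t)\big)\|q\|_{\op}
\quad\text{for all } t\in[0,\tau).
\]
Letting $\tau\uparrow\tau_{\max}$, this bound holds on the entire interval $[0,\tau_{\max})$. Since $\|q\|_{\op}<\lambda R_2$ by hypothesis and $\nu$ is nondecreasing (this is visible from \eqref{eq:nu_def}), I then use the definition \eqref{eq:tilde_tau_def} of $\widetilde\tau_{\min}$, which is exactly the value at which $\lambda(1+\nu(\widetilde\tau_{\min}))=1$, to conclude that for every $t\in[0,\tau_{\max})\cap[0,\widetilde\tau_{\min}]$,
\[
\|x_q(t)\|_{\op} < \big(1+\nu(t)\big)\lambda R_2 \leq R_2.
\]

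The final step is a contradiction argument to rule out $\tau_{\max}\leq \widetilde\tau_{\min}$. If $\tau_{\max}\leq \widetilde\tau_{\min}$, the previous inequality shows that $x_q(\cdot)$ stays strictly inside $\bop(R_2)$ as $t\to\tau_{\max}^-$, with the bound $\|x_q(t)\|_{\op}<\|q\|_{\op}/\lambda<R_2$ holding uniformly on $[0,\tau_{\max})$. Standard ODE extension theory (the solution approaches neither the boundary of the domain of $f_q$ nor infinity) then permits extending the unique solution past $\tau_{\max}$ while remaining in $\bop(R_2)$, contradicting the maximality of $\tau_{\max}$. Hence $\tau_{\max}>\widetilde\tau_{\min}$, which yields both existence/uniqueness on $[0,\widetilde\tau_{\min}]$ and the containment \eqref{eq:x_q_bound}.

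The mildly delicate point, which I expect to be the main thing to write carefully, is the bootstrap: Lemma \ref{lem:is_diff} requires $x_q(t)\in\bop(R_2)$ as a hypothesis, so one must first argue on subintervals $[0,\tau)\subset[0,\tau_{\max})$ before promoting the estimate to the closed interval and invoking maximality. Everything else is just unwinding the definition of $\widetilde\tau_{\min}$.
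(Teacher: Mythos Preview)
Your proposal is correct and follows essentially the same strategy as the paper: establish the a-priori bound $\|x_q(t)\|_{\op}\le (1+\nu(t))\|q\|_{\op}$ via Lemma~\ref{lem:is_diff}, then argue by contradiction that the solution cannot escape before $\widetilde\tau_{\min}$. The only packaging difference is that you work with the maximal existence time in $\bop(R_2)$ and invoke a standard continuation theorem, whereas the paper defines the first time $s_1$ at which the bound $(1+\nu(t))\|q\|_{\op}$ is violated and derives the contradiction directly from Lemma~\ref{lem:is_diff} on $[0,s_1+\delta)$; both routes are equally valid and rest on the same estimate.
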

\begin{proof}
	It is enough to show that the following two statements hold for
	fixed $\lambda \in (0,1)$ and 
	$q \in \bop(\lambda R_2)$:
	\begin{enumerate}
		\renewcommand{\labelenumi}{(\roman{enumi})}
		\item The ODE \eqref{eq:x_q_ODE} 
		has a unique solution  $x_q$
		on $
		[0, \widetilde \tau_{\min}]
		$.
		\item The solution $x_q$ satisfies
		\begin{equation}
			\label{eq:xq_bound_proof}
			\|x_q(t)\|_{\op} \leq
			\big(
			1+ \nu(t)
			\big) \|q\|_{\op}\quad \text{for all
				$t \in
				[0, \widetilde \tau_{\min} ]$.}
		\end{equation}
	\end{enumerate}
	Indeed, since 
	$
	\lambda (1+\nu(t) ) \leq 1
	$ 
	for all $t \in [0,\widetilde \tau_{\min}]$,
	the inequality \eqref{eq:xq_bound_proof} yields
	\begin{equation}
		\label{eq:xq_bound}
		x_q(t) \in \overline{\mathbf{B}}_{\op}( \|q\|_{\op} / \lambda  )
	\end{equation}
	for all 
	$t \in
	[0, \widetilde \tau_{\min} ]$.
	From $\|q\|_{\op} < \lambda R_2$, we obtain
	\begin{equation}
		\label{eq:bop_q_lam}
		\overline{\mathbf{B}}_{\op}( \|q\|_{\op} / \lambda  )  \subset \bop(R_2).
	\end{equation}
	Combining 
	\eqref{eq:xq_bound} and \eqref{eq:bop_q_lam},
	we conclude 
	that \eqref{eq:x_q_bound} holds.
	
	We prove that statements (i) and (ii) are true, by obtaining a contradiction.
	Assume that 
	\begin{itemize}
		\item
		the  solution
		of the ODE \eqref{eq:x_q_ODE} either does not exist
		or is not unique on
		$
		[0, \widetilde \tau_{\min}]
		$; or that
		\item
		there exists a unique solution $x_q$
		of the ODE \eqref{eq:x_q_ODE} on
		$
		[0, \widetilde \tau_{\min}]$, but
		$x_q$ does not satisfy the inequality \eqref{eq:xq_bound_proof}.
	\end{itemize}
	In both cases, we can deduce that 
	there exists $s_0 \in (0,\widetilde \tau_{\min}]$ such that 
	the ODE \eqref{eq:x_q_ODE}  has a unique  solution $x_q$
	on
	$
	[0, s_0]$ satisfying
	\[
	\|x_q(s_0)\|_{\op} > \big(
	1+\nu(s_0)
	\big) \|q\|_{\op}.
	\]
	Define
	\[
	s_1 \coloneqq 
	\inf\left\{
	t \in \mathbb{R}_{> 0} :
	\|x_q(t)\|_{\op} >
	\big(
	1+ \nu(t)
	\big) \|q\|_{\op}
	\right\} \in [0,s_0].
	\]
	Since $x_q$ and $\nu$ are continuous, 
	it follows that 
	$
	s_1 < s_0 \leq \widetilde  \tau_{\min},
	$
	and then,
	for all $t \in [0, s_1]$, 
	\[
	\|x_q(t)\|_{\op} \leq 
	\big(
	1+ \nu(t)
	\big) \|q\|_{\op}< \lambda \big(
	1+ \nu(\widetilde  \tau_{\min})
	\big)R_2 =  R_2.
	\]
	Hence
	there exists $\delta \in (0,s_0-s_1)$ such that
	$
	\|x_q(t)\|_{\op} < R_2
	$
	for all $t \in [0,s_1+ \delta )$. Using 
	Lemma~\ref{lem:is_diff}, we obtain
	\[
	\|x_q(t)\|_{\op} \leq 
	\|q\|_{\op} + \|x_q(t) - q\|_{\op} \leq
	\big(
	1+\nu(t)
	\big) \|q\|_{\op}
	\]
	for all $t \in [0,s_1+\delta)$.
	This contradicts the definition of $s_1$.
\end{proof}

\section{Self-triggered stabilization under logarithmic quantization}
\label{sec:log_case}
In this section, we study the problem of self-triggered stabilization for
contracting systems under logarithmic quantization.
First, we describe the logarithmic quantizer used in this paper and 
present some basic properties.
Next, we propose an STM that computes inter-sampling times from
the system model and the quantized state.
Finally, we show that the state converges to the origin
without Zeno behavior if the parameters of
quantization and self-triggered sampling
are chosen suitably.

\subsection{System model}
\subsubsection{Logarithmic quantizer}
Let $\rho \in (0,1)$ and $\chi_{i,0} \in \mathbb{R}_{>0}$ for $i=1,\dots,n$, where $n$ is the 
dimension of the state space.
Set $\chi_{i,j} \coloneqq \rho^j \chi_{i,0}$ for $i=1,\dots,n$ and $j \in \mathbb{Z}$.
For $i=1,\dots,n$,
define the logarithmic quantization function 
$Q_{\log,i} \colon \mathbb{R} \to \mathbb{R}$ by
\begin{align*}
	Q_{\log,i}(z) \coloneqq
	\begin{cases}
		\dfrac{\chi_{i,j} + \chi_{i,j+1}}{2} & \text{if $\chi_{i,j+1} \leq z < \chi_{i,j}$} \vspace{3pt}\\
		0 & \text{if $z = 0$} \\
		-Q_{\log,i}(-z) & \text{if $z < 0$}.
	\end{cases}
\end{align*}
Using 
the scalar function
$Q_{\log,i}$, we also define the vector function 
$Q_{\log}\colon \mathbb{R}^n \to \mathbb{R}^n$ by
\begin{align*}
	Q_{\log}
	\left(
	\begin{bmatrix}
		z_1 \\ \vdots \\ z_n
	\end{bmatrix}
	\right) &\coloneqq
	\begin{bmatrix}
		Q_{\log,1}(z_1) \\ \vdots \\ 	Q_{\log,n}(z_n)
	\end{bmatrix}.
\end{align*}
The logarithmic quantizer by the function $Q_{\log}$ becomes 
coarser as the quantization density $\rho$ decreases.

To exploit  the property that the $i$-th element of $Q_{\log}(x)$ depends
only on the $i$-th element of $x$, 
we use diagonally-weighted norms in this section.
Take $\theta_{\cl},\theta_{\op} \in \mathbb{R}^n_{>0}$ and $r \in [1,\infty]$ 
arbitrarily. 
Since any two norms on $\mathbb{R}^n$ are equivalent,
it follows that
for the norms 
$\|\cdot\|_{\cl}$ and $\|\cdot\|_{\op}$
in
Assumptions~\ref{assump:closed} and \ref{assump:open}, 
there exist
constants $L_{\cl,1},L_{\cl,2},L_{\op,1},L_{\op,2} \in \mathbb{R}_{>0}$ such that
\begin{equation}
	\label{eq:cl_norm_equiv}
	L_{\cl,1} \|x \|_{\infty,[\theta_{\cl}]} \leq 
	\|x\|_{\cl} \leq L_{\cl,2} \|x \|_{\infty,[\theta_{\cl}]} 
\end{equation}
and
\begin{equation}
	\label{eq:op_norm_equiv}
	L_{\op,1} \|x \|_{r,[\theta_{\op}]} \leq 
	\|x\|_{\op} \leq L_{\op,2} \|x \|_{r,[\theta_{\op}]} 
\end{equation}
for all $x \in \mathbb{R}^n$. 
Notice that 
only $\infty$-norms are chosen for $\|\cdot\|_{\cl}$, because
a property of $\infty$-norms will be used
to derive 
a condition on the parameter $\chi_{i,0}$
of $Q_{\log, i}$ 
in Assumption~\ref{assump:initial_cond}
and Lemma~\ref{lem:Qlog_prop} below.
Define 
the constants $L_{\cl},L_{\op} \geq 1$ by
\begin{equation}
	\label{eq:Lcl_Lop_def}
	L_{\cl} \coloneqq \frac{L_{\cl,2}}{L_{\cl,1}}
	\quad \text{and} \quad 
	L_{\op} \coloneqq \frac{L_{\op,2}}{L_{\op,1}}.
\end{equation}
If $\|\cdot \|_{\cl} =  \|\cdot \|_{\infty,[\theta_{\cl}]} $ and 
$\|\cdot \|_{\op} =  \|\cdot \|_{r,[\theta_{\op}]} $, then 
$L_{\cl} = L_{\op} = 1$.
\begin{assumption}
	\label{assump:initial_cond}
	The parameter $\chi_{i,0}$ of the quantization function $Q_{\log,i}$
	is chosen as $\chi_{i,0} = \chi_0 / \theta_{\cl,i}$ for each $i=1,\dots,n$, 
	where
	$[\theta_{\cl,1}~~\theta_{\cl,2}~~\cdots~~\theta_{\cl,n} ] \coloneqq \theta_{\cl}^{\top}$
	and $\chi_0 \in \mathbb{R}_{>0}$ satisfies
	\[
	\frac{R}{L_{\cl,2}} \leq \chi_{0} < \frac{2R}{L_{\cl,2}(1+\rho)}
	\]
	for the constant $R \in \mathbb{R}_{>0}$ given in Section~\ref{sec:nonlinear_assump}.
\end{assumption}

The reason for the choice of the parameter 
$\chi_{0}$ in Assumption~\ref{assump:initial_cond}
is that
the quantized value of $x \in \bcl( R / L_{\cl})$ 
belongs to $\bcl(\lambda R)$ for some
$\lambda \in (0,1)$, which is proved in the next lemma.
\begin{lemma}
	\label{lem:Qlog_prop}
	Suppose that Assumptions~\ref{assump:closed}, \ref{assump:open}, and
	\ref{assump:initial_cond} hold.
	Define 
	\[
	\lambda_0 \coloneqq \frac{(1+\rho) L_{\cl,2} \chi_0}{2R}  < 1
	\]
	and let $\lambda \in (\lambda_0, 1)$.
	Then $Q_{\log}(x) \in \bcl(\lambda R)$ 
	for all $x \in \bcl( R / L_{\cl})$.
\end{lemma}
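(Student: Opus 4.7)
The plan is to exploit the componentwise structure of the logarithmic quantizer via the fact that $\|\cdot\|_{\infty,[\theta_{\cl}]}$ acts coordinatewise. I would move from the bound on $\|x\|_{\cl}$ to a bound on each component $|x_i|$, use the quantizer definition to bound each $|Q_{\log,i}(x_i)|$ by $\chi_{i,0}(1+\rho)/2$, and finally convert back through the norm equivalence \eqref{eq:cl_norm_equiv}.

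First, I would translate $x \in \bcl(R/L_{\cl})$ into a componentwise estimate. By the left inequality in \eqref{eq:cl_norm_equiv} and the definition $L_{\cl} = L_{\cl,2}/L_{\cl,1}$ from \eqref{eq:Lcl_Lop_def}, any $x$ with $\|x\|_{\cl} < R/L_{\cl}$ satisfies $\|x\|_{\infty,[\theta_{\cl}]} < R/L_{\cl,2}$, i.e.\ $\theta_{\cl,i}|x_i| < R/L_{\cl,2}$ for every $i$. Combining this with Assumption~\ref{assump:initial_cond}, which gives $\chi_{i,0} = \chi_0/\theta_{\cl,i} \geq R/(L_{\cl,2}\theta_{\cl,i})$, I obtain the strict inequality $|x_i| < \chi_{i,0}$ for every $i$.

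Next, I would unfold the definition of $Q_{\log,i}$. Whenever $|x_i| < \chi_{i,0}$, the point $|x_i|$ lies in some interval $[\chi_{i,j+1},\chi_{i,j})$ with $j \geq 0$ (or $x_i = 0$). In this interval $|Q_{\log,i}(x_i)| = \chi_{i,j}(1+\rho)/2 \leq \chi_{i,0}(1+\rho)/2$, and the case $x_i = 0$ is trivial. Multiplying by $\theta_{\cl,i}$ and taking the supremum over $i$ yields
\[
\|Q_{\log}(x)\|_{\infty,[\theta_{\cl}]} \leq \frac{(1+\rho)\chi_0}{2}.
\]
Applying the right half of \eqref{eq:cl_norm_equiv} then gives $\|Q_{\log}(x)\|_{\cl} \leq L_{\cl,2}(1+\rho)\chi_0/2 = \lambda_0 R < \lambda R$, so $Q_{\log}(x) \in \bcl(\lambda R)$ as desired.

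There is no real obstacle; the argument is essentially bookkeeping across the two norm equivalences. The only point worth emphasizing is why $\infty$-norms are essential here: the componentwise bound $|Q_{\log,i}(x_i)| \leq \chi_{i,0}(1+\rho)/2$ passes directly through $\|\cdot\|_{\infty,[\theta_{\cl}]}$, whereas for a general weighted $r$-norm one would incur a dimension-dependent factor. This is exactly the structural reason highlighted after \eqref{eq:op_norm_equiv} for restricting $\|\cdot\|_{\cl}$ to the $\infty$-norm family.
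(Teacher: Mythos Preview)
Your proof is correct and follows essentially the same route as the paper: pass from $\|\cdot\|_{\cl}$ to $\|\cdot\|_{\infty,[\theta_{\cl}]}$ via \eqref{eq:cl_norm_equiv}, bound each $|Q_{\log,i}(x_i)|$ by $(1+\rho)\chi_{i,0}/2$ using $|x_i|<\chi_{i,0}$, and convert back. The paper's version is slightly terser (it introduces $\widetilde R=R/L_{\cl,2}$ and works directly with the strict inequality $\lambda_0<\lambda$ at the componentwise stage), but the logic is identical.
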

\begin{proof}
	Let	$i=1,\dots,n$ be given, and define $\widetilde R \coloneqq 
	R / L_{\cl,2}$.
	By Assumption~\ref{assump:initial_cond}, we obtain
	$\widetilde R / \theta_{\cl,i} \leq \chi_{i,0}$.
	If $z \in \mathbb{ R}$ satisfies $|z| < \widetilde R/\theta_{\cl,i}$, then
	\[
	\left|Q_{\log,i} \left(
	z
	\right) \right|
	\leq \frac{1+\rho }{2}\chi_{i,0} = \frac{\lambda_0 \widetilde R}{\theta_{\cl,i}} < 
	\frac{\lambda \widetilde R}{\theta_{\cl,i}}.
	\]
	Therefore, if $\|x\|_{\infty,[\theta]_{\cl}} < \widetilde R$, then
	$\|Q_{\log}(x)\|_{\infty,[\theta]_{\cl}} < \lambda \widetilde R$.
	This and the inequalities in \eqref{eq:cl_norm_equiv} show that 
	$Q_{\log}(x) \in \bcl(\lambda R)$ for all $x \in \bcl(R / L_{\cl})$.
\end{proof}

We present useful inequalities on quantized values and quantization errors.
Routine calculations show that
for all $\theta \in \mathbb{R}^n_{>0}$, $r \in [1,\infty]$, and $x \in \mathbb{R}^n$,
the quantization function $Q_{\log}$ satisfies
\[
\|Q_{\log}(x)\|_{r,[\theta]}  \leq \frac{1+\rho}{2\rho} \|x\|_{r,[\theta]}
\]
and
\[
\|Q_{\log}(x) - x\|_{r,[\theta]}  \leq \frac{1-\rho}{1+\rho} \|Q_{\log}(x)\|_{r,[\theta]}.
\]
Using \eqref{eq:cl_norm_equiv} and \eqref{eq:op_norm_equiv}, we obtain
\begin{equation}
	\label{eq:log_prop2}
	\|Q_{\log}(x)\|_{\cl}  \leq \frac{L_{\cl}(1+\rho)}{2\rho} \|x\|_{\cl}
\end{equation}
and
\begin{equation}
	\label{eq:log_prop1}
	\|Q_{\log}(x) - x\|_{\op}  \leq \frac{L_{\op}(1-\rho)}{1+\rho} \|Q_{\log}(x)\|_{\op}.
\end{equation}

\subsubsection{Self-triggered control system}
Let 
$\{t_k\}_{k \in \mathbb{Z}_{\geq 0} } 
$  be a strictly increasing sequence with $t_0 \coloneqq 0$.
Consider the following closed-loop system:
\begin{equation}
	\label{eq:closed_loop}
	\left\{
	\begin{aligned}
		\dot x(t) &= f\big(x(t),u(t)\big), && 
		t \in \mathbb{R}_{\geq 0};\qquad x(0) = x_0 \\
		u(t) &= g(q_k ),&&  t \in [t_k, t_{k+1}),~k \in \mathbb{Z}_{\geq 0} \\
		q_k &= Q_{\log}\big(x(t_k)\big),&&  k \in \mathbb{Z}_{\geq 0}.
	\end{aligned}
	\right.
\end{equation}
Inspired by 
the inequality \eqref{eq:x_q_diff} and 
the property \eqref{eq:log_prop1} of the logarithmic quantizer,
we define the function $\psi_{\log}$ by
\begin{equation}
	\label{eq:psi_log}
	\psi_{\log}(\tau,q) \coloneqq  
	\frac{L_{\op}(1-\rho)}{1+\rho} e^{d_1 \tau} \|q\|_{\op} +  
	\|x_{q}(\tau)- q \|_{\op}
\end{equation}
for $\tau \in \mathbb{R}_{\geq 0}$ and $q \in \bcl(R)$ such that there exists
a unique solution $x_q$ of the ODE \eqref{eq:x_q_ODE} on $[0, \tau]$.
Let $\tau_{\max} \in \mathbb{R}_{>0}$.
For the computation of the sampling times $\{t_k\}_{k \in \mathbb{Z}_{\geq 0} }$,
we propose 
the  STM given by 	
\begin{subequations}
	\label{eq:STM}
	\begin{empheq}[left = {\empheqlbrace \,}, right = {}]{align}
		t_{k+1} &\coloneqq t_k + \tau_k \\
		\tau_{k} &\coloneqq 
		\begin{cases}
			\widetilde \tau_k & \text{if 
				$\psi_{\log}(\tau, q_k) \leq \sigma \|q_k\|_{\cl}$
				for all $\tau \in (0, \widetilde \tau_k)$} \\
			\inf \{
			\tau \in  \mathbb{R}_{>0}  : 
			\psi_{\log}(\tau, q_k) > \sigma \|q_k\|_{\cl}
			\}
			&  \text{otherwise} 
		\end{cases} \label{eq:log_SMT2}\\
		\widetilde \tau_{k} &\coloneqq 
		\begin{cases}
			\tau_{\max} &
			\text{if $\|x_{q_k}(\tau)\|_{\op} < R_2$
				for all $\tau \in (0, \tau_{\max})$} \\
			\inf \{
			\tau \in \mathbb{R}_{>0} : 
			\|x_{q_k}(\tau)\|_{\op} \geq R_2
			\} \hspace{18pt}
			& \text{otherwise},
		\end{cases} \label{eq:log_SMT3}
	\end{empheq}
\end{subequations}
where
$x_{q_k}$ is the solution of the ODE \eqref{eq:x_q_ODE}  with $q=q_k$.
In the STM \eqref{eq:STM}, 
$\tau_{\max} $ is an upper bound of the
inter-sampling times. 
Indeed,
since $\tau_k \leq \widetilde \tau_k \leq \tau_{\max}$ by definition, we have 
$t_{k+1} - t_k \leq \tau_{\max}$.
\stepcounter{equation}

The objective of the SMT \eqref{eq:STM} is that
the measurement error, i.e.,
the difference between the state $x(t_k+\tau)$
and the quantized state $q_k$ at the sampling time $t= t_k$ is 
upper-bounded by
$\sigma \|q_k\|_{\cl}$.
We now briefly explain how the STM \eqref{eq:STM} works.
To compute the inter-sampling time $\tau_k$,
the STM~\eqref{eq:STM} 
predicts the future state trajectory from
$q_k$.
The predicted state is given by
the solution $x_{q_k}$ of the ODE \eqref{eq:x_q_ODE} 
with  $q=q_k$ and is 
computed  in practice by numerical methods for solving 
ODEs.
Then $x_{q_k}$ is used to evaluate the triggering conditions in \eqref{eq:log_SMT2}
and \eqref{eq:log_SMT3}.
Note that 
the true state $x(t_k+\tau)$ and 
the predicted state $x_{q_k}(\tau)$ have trajectories with different 
values at $\tau = 0$ 
due to the quantization error $x(t_k) - q_k$.

The roles of the triggering conditions in \eqref{eq:log_SMT2}
and \eqref{eq:log_SMT3} are as follows:
From \eqref{eq:log_SMT2}, we obtain 
\[
\psi_{\log}(\tau, q_k) \leq \sigma \|q_k\|_{\cl}
\]
for all $\tau \in [0,  \tau_k)$.
On the other hand, by
\eqref{eq:log_SMT3}, 
there exists a unique solution $x_{q_k}$ of the ODE \eqref{eq:x_q_ODE} with $q=q_k$
on $[0,\widetilde \tau_k]$, and
$x_{q_k}$ satisfies 
$x_{q_k} (\tau) \in \bop(R_2)$ for all $\tau \in [0,  \widetilde \tau_k)$.
Combining this and Lemma~\ref{lem:x_q_diff},
we will show that
\[
\|x(t_k+\tau) - q_k\|_{\op} \leq \psi_{\log}(\tau, q_k)
\]
for all $\tau \in [0,  \tau_k)$.
Note that 
this inequality is obtained by using the upper bound on the growth rate of the open-loop system on $\bop(R_2)$. Hence
the STM has to monitor whether the condition $x_{q_k}(\tau) \in \bop(R_2)$ is satisfied.
In the linear case \cite{Wakaiki2023, Liu2023}, growth bounds can be obtained on $\mathbb{R}^n$, and hence this monitoring is not required.

\subsection{Stability analysis}
\subsubsection{Main result in logarithmic quantization case}
Before stating the main result of this section,
we introduce two constants $\tau_{\min}$ and $\widetilde \tau_{\min}$
to describe a lower bound of the inter-sampling times.
We define 
$\tau_{\min} \in \mathbb{R}_{>0}$ by
the solution of the equation
\begin{equation}
	\label{eq:tau_min}
	\frac{L_{\op}(1-\rho)}{1+\rho} e^{d_1 t} + 
	\nu(t)
	= \frac{\sigma}{\Gamma},
\end{equation}
where the function $\nu$ is as in \eqref{eq:nu_def}.
Since $\nu(0) = 0$ and $\nu$ is strictly increasing,
there exists a unique solution of the equation 
\eqref{eq:tau_min} on $\mathbb{R}_{>0}$ if and only if 
\begin{equation}
	\label{eq:rho_sigma_ineq}
	\frac{\Gamma L_{\op}(1-\rho)}{1+\rho} <
	\sigma.
\end{equation}
When this inequality \eqref{eq:rho_sigma_ineq} holds,
$\tau_{\min} \in \mathbb{R}_{>0}$ is given by
\[
\tau_{\min} = 
\begin{cases}
	\dfrac{1}{d_1}
	\log\left(
	\dfrac{\sigma(1+\rho)d_1 + \Gamma(1+\rho)d_2}{\Gamma L_{\op}(1-\rho) d_1+\Gamma(1+\rho)d_2 }
	\right)  & \hspace{-4.2pt}\text{if $d_1 \not=0$} \vspace{5pt}\\
	\dfrac{1}{d_2}
	\left(
	\dfrac{\sigma}{\Gamma} - \dfrac{L_{\op}(1-\rho)}{1+\rho} 
	\right) & \hspace{-4.2pt}\text{if $d_1 =0$}.
\end{cases}
\]
Fix  a constant $\lambda$ as in Lemma~\ref{lem:Qlog_prop}, and
let $\widetilde \tau_{\min} \in \mathbb{R}_{>0}$ be
the solution of the equation
$
\lambda \left(
1+ \nu(t) 
\right) = 1,
$
that is, $\widetilde \tau_{\min}$ is defined as in \eqref{eq:tilde_tau_def}.

The following theorem shows that  
the norm $\|x(t)\|_{\cl}$ of the state trajectory starting in $\bcl(R / L_{\cl})$ 
decreases monotonically and 
exponentially without Zeno behavior.
\begin{theorem}
	\label{thm:log_case}
	Suppose that Assumptions~\ref{assump:closed}--\ref{assump:control_part} and 
	\ref{assump:initial_cond} hold. 
	If the quantization density $\rho \in (0,1)$
	and the threshold parameter $\sigma\in (0,\sigma_0]$ 
	satisfy
	\begin{equation}
		\label{eq:thres_cond}
		\frac{\Gamma L_{\op}(1-\rho)}{1+\rho} 
		< \sigma < \frac{2c\rho}{\alpha L_{\cl}(1+\rho)}
		\eqqcolon \sigma_1,
	\end{equation}
	then 
	the closed-loop system \eqref{eq:closed_loop} with STM~\eqref{eq:STM} has
	the following properties for every initial state 
	$x_0 \in \bcl( R / L_{\cl})$
	and upper bound $\tau_{\max} \in \mathbb{R}_{>0}$ of the
	inter-sampling times:
	\begin{enumerate}
		\renewcommand{\labelenumi}{(\roman{enumi})}
		\item
		The inter-sampling times satisfy
		\begin{equation}
			\label{eq:min_inter_event_time}
			t_{k+1} - t_k \geq \min\{ \tau_{\max},\,
			\tau_{\min} ,\, \widetilde \tau_{\min} 
			\} >0
		\end{equation}
		for all $k \in \mathbb{Z}_{\geq 0}$.
		\item
		There exists a unique solution $x$ of the closed-loop system 
		\eqref{eq:closed_loop} with STM~\eqref{eq:STM} on $\mathbb{R}_{\geq 0}$.
		\item 
		The solution $x$ satisfies 
		\[
		\|x(t)\|_{\cl} \leq 
		e^{-\gamma t} \|x_0\|_{\cl} 
		\]
		for all $t \in \mathbb{R}_{\geq 0}$, where
		\begin{equation}
			\label{eq:gamma_def}
			\gamma \coloneqq \frac{-1}{\tau_{\max}} \log\left( e^{-c\tau_{\max} }
			\left(
			1 - \frac{\sigma }{\sigma_1 }  
			\right) + 
			\frac{\sigma }{\sigma_1 }    \right)>0.
		\end{equation}
	\end{enumerate}
	\mbox{} \vspace{-5pt}
\end{theorem}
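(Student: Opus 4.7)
\medskip

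\noindent\textbf{Proof plan.} The plan is to run an induction on $k$ that simultaneously establishes, on each interval $[t_k,t_{k+1}]$, the Zeno-free lower bound of part (i), the local well-posedness of part (ii), and a one-step contraction from $\|x(t_k)\|_{\cl}$ to $\|x(t_{k+1})\|_{\cl}$ that implies part (iii). The induction hypothesis at step $k$ is that $x(t_k)\in\bcl(R/L_{\cl})$ with a suitable decay estimate already established up to time $t_k$; for $k=0$ this is just the assumption on $x_0$.

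\emph{Step 1 (bounds on $\tau_k$ and $\widetilde\tau_k$).} By Lemma~\ref{lem:Qlog_prop}, the hypothesis $x(t_k)\in\bcl(R/L_{\cl})$ gives $q_k=Q_{\log}(x(t_k))\in\bcl(\lambda R)$ for some $\lambda\in(\lambda_0,1)$. Using $\|\cdot\|_{\op}\le\Gamma\|\cdot\|_{\cl}$ and $\Gamma R\le R_2$, this yields $q_k\in\bop(\lambda R_2)$, so Lemma~\ref{lem:open_loop_bound} gives existence and uniqueness of $x_{q_k}$ on $[0,\widetilde\tau_{\min}]$ with $x_{q_k}(\tau)\in\bop(R_2)$ throughout. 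Hence by the definition \eqref{eq:log_SMT3}, $\widetilde\tau_k\ge\min\{\tau_{\max},\widetilde\tau_{\min}\}$. For the threshold condition \eqref{eq:log_SMT2}, Lemma~\ref{lem:is_diff} bounds $\|x_{q_k}(\tau)-q_k\|_{\op}\le\nu(\tau)\|q_k\|_{\op}$, so
\[
\psi_{\log}(\tau,q_k)\le\Big(\tfrac{L_{\op}(1-\rho)}{1+\rho}e^{d_1\tau}+\nu(\tau)\Big)\Gamma\|q_k\|_{\cl},
\]
and the defining equation \eqref{eq:tau_min} of $\tau_{\min}$ shows the bracketed factor is $\le\sigma/\Gamma$ for $\tau\le\tau_{\min}$. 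Therefore $\tau_k\ge\min\{\tau_{\max},\tau_{\min},\widetilde\tau_{\min}\}$, which is part (i); the left-hand inequality in \eqref{eq:thres_cond} is precisely what makes $\tau_{\min}>0$.

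\emph{Step 2 (measurement-error bound and state remaining in $\bcl(R)$).} Applying Lemma~\ref{lem:x_q_diff} together with the logarithmic quantizer estimate \eqref{eq:log_prop1} and the $\widetilde\tau_k$-bound just obtained gives, for all $s\in[0,\tau_k)$,
\[
\|x(t_k+s)-q_k\|_{\op}\le\psi_{\log}(s,q_k)\le\sigma\|q_k\|_{\cl}\le\sigma_0\|q_k\|_{\cl}.
\]
Setting $e(s):=x(t_k+s)-q_k$, one has $e(s)\in\bop(\sigma_0 R)$ for $s\in[0,\tau_k)$ since $\|q_k\|_{\cl}<R$, so Assumption~\ref{assump:control_part} is applicable. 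This step also requires verifying that $x(t_k+s)\in\bcl(R)$ throughout the interval: from the inductive decay estimate this holds at $s=0$, and I propagate it by a standard continuity/contradiction argument (exactly as in Lemma~\ref{lem:open_loop_bound}), using Theorem~\ref{thm:basic_prop_of_contraction3} to preclude exit from $\bcl(R)$. Local Lipschitz continuity of $f$ and $g$ then gives local existence and uniqueness of $x$ on each sampling interval, proving part (ii).

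\emph{Step 3 (one-step contraction and global exponential decay).} With the error bound from Step 2, Theorem~\ref{thm:basic_prop_of_contraction3} yields, for every $s\in[0,\tau_k]$,
\[
\|x(t_k+s)\|_{\cl}\le e^{-cs}\|x(t_k)\|_{\cl}+\frac{\alpha(1-e^{-cs})}{c}\sigma\|q_k\|_{\cl}.
\]
Combining this with the quantizer estimate \eqref{eq:log_prop2} and the identity $\frac{\alpha L_{\cl}(1+\rho)}{2c\rho}=1/\sigma_1$, the coefficient becomes
\[
h(s):=e^{-cs}\Big(1-\tfrac{\sigma}{\sigma_1}\Big)+\tfrac{\sigma}{\sigma_1},
\]
which by the right-hand inequality in \eqref{eq:thres_cond} satisfies $h(s)<1$ for $s>0$. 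A short convexity argument (the secant slope of $\log h$ through $0$ is increasing in $s$) shows $h(s)\le e^{-\gamma s}$ on $(0,\tau_{\max}]$ with $\gamma$ as in \eqref{eq:gamma_def}. This yields the one-step estimate $\|x(t_k+s)\|_{\cl}\le e^{-\gamma s}\|x(t_k)\|_{\cl}$; in particular $x(t_{k+1})\in\bcl(R/L_{\cl})$ closes the induction, and iterating gives part (iii).

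\emph{Main obstacle.} The delicate point is maintaining the invariants $x(t_k)\in\bcl(R/L_{\cl})$ and $x(t)\in\bcl(R)$ for $t\in[t_k,t_{k+1}]$, since Theorem~\ref{thm:basic_prop_of_contraction3} requires the trajectory to remain in $\bcl(R)$ as a hypothesis rather than delivering it as a conclusion. I expect to handle this by the same continuity-and-contradiction argument used in Lemma~\ref{lem:open_loop_bound}: if the trajectory were to first exit $\bcl(R)$ at some time $s^*\in(0,\tau_k)$, the theorem applies on $[0,s^*)$ and produces a strict bound $<R$ at $s^*$, contradicting the exit.
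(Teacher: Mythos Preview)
Your plan is correct and mirrors the paper's proof almost exactly: the paper packages your Step~1 as Lemma~\ref{lem:mim_inter_event}, your Steps~2--3 (including the continuity/contradiction argument you flag as the main obstacle) as Lemma~\ref{lem:first_conv}, and your convexity observation that $\log h$ is convex so the secant slope $\log h(s)/s$ is nondecreasing as Lemma~\ref{lem:decay_bound}; the induction then proceeds precisely as you describe. The only cosmetic difference is that in the contradiction argument the paper tracks the invariant $\|x(t)\|_{\cl}\le\|x(t_k)\|_{\cl}$ rather than merely $x(t)\in\bcl(R)$, but since $\|x(t_k)\|_{\cl}<R/L_{\cl}\le R$ and the one-step estimate already gives $\|x(t_k+s)\|_{\cl}\le h(s)\|x(t_k)\|_{\cl}$, your version works just as well.
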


\begin{remark}[Parameter dependency]
	As the quantization density $\rho$
	increases, the range of the threshold parameter 
	$\sigma$ given by
	\eqref{eq:thres_cond} expands. 
	A large $\sigma $ leads to a decrease
	in sampling frequency. On the other hand, 
	as $\sigma$ becomes smaller, 
	the upper bound $\gamma$ of the decay rate
	given by \eqref{eq:gamma_def} increases to 
	the contraction rate $c$ in Assumption~\ref{assump:closed}, 
	and faster convergence
	can be expected.
	Note also that the upper bound $\tau_{\max} \in \mathbb{R}_{>0}$ of  the
	inter-sampling times affects only $\gamma$.
	\hspace*{\fill} $\triangle$
\end{remark}

\subsubsection{Proof of main result}
The proof of Theorem~\ref{thm:log_case} is based on three lemmas.
We begin 
with a  technical lemma, which gives  an upper 
bound of the decay rate in statement (iii) of 
Theorem \ref{thm:log_case}. This result has been used without detailed derivation in the proof of \cite[Theorem~4.1]{Wakaiki2018_EVC}.
For the sake of completeness, we provide all the details in Appendix.
\begin{lemma}
	\label{lem:decay_bound}
	Let $\varepsilon \in (0,1)$ and $c,\tau_{\max} \in \mathbb{R}_{>0}$.
	Define the functions $w$ and $W$ by
	\[
	w(t) \coloneqq e^{-ct} (1-\varepsilon ) + \varepsilon  \quad  \text{and} \quad 
	W(t) \coloneqq - \frac{\log w(t) }{t}
	\]
	for $t \in \mathbb{R}_{>0}$. 
	Set
	$\gamma \coloneqq W(\tau_{\max})$. Then
	$\gamma > 0$ and
	\begin{align}
		\label{eq:f_tmax_bound}
		w(t) \leq e^{-\gamma t} \quad \text{for all $t \in [0,\tau_{\max}]$.}
	\end{align}
	\mbox{} \vspace{-5pt}
\end{lemma}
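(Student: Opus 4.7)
The claim has two parts: first, the positivity of $\gamma$, and second, the pointwise bound $w(t)\le e^{-\gamma t}$ on $[0,\tau_{\max}]$. My plan is to handle the first part by a direct inspection of $w(\tau_{\max})$, and then to reduce the second part to a convexity argument applied to $\log w$.

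\textbf{Step 1: Positivity of $\gamma$.} Since $\varepsilon\in(0,1)$, both summands in the definition of $w$ are strictly positive, so $w(\tau_{\max})>0$. On the other hand, $e^{-c\tau_{\max}}<1$ because $c,\tau_{\max}>0$, and therefore
\[
w(\tau_{\max}) = e^{-c\tau_{\max}}(1-\varepsilon)+\varepsilon < (1-\varepsilon)+\varepsilon = 1.
\]
Hence $-\log w(\tau_{\max})>0$, which yields $\gamma>0$ by the definition $\gamma=W(\tau_{\max})$.

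\textbf{Step 2: Reduction to convexity.} Introduce the auxiliary function $\phi(t) \coloneqq \log w(t)+\gamma t$ on $[0,\tau_{\max}]$. A direct check gives $\phi(0)=\log 1=0$ and, using $\gamma\tau_{\max}=-\log w(\tau_{\max})$, also $\phi(\tau_{\max})=0$. The desired inequality $w(t)\le e^{-\gamma t}$ is equivalent to $\phi(t)\le 0$. So it suffices to prove that $\phi$ is convex on $[0,\tau_{\max}]$, because any convex function that vanishes at both endpoints of a compact interval is nonpositive on that interval (by the standard chord estimate $\phi(t)\le(1-t/\tau_{\max})\phi(0)+(t/\tau_{\max})\phi(\tau_{\max})$).

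\textbf{Step 3: Convexity of $\phi$.} Since the linear term $\gamma t$ is harmless, convexity reduces to showing $(\log w)''\ge 0$, i.e., $w''(t)w(t)\ge w'(t)^2$. A short computation with $w'(t)=-c(1-\varepsilon)e^{-ct}$ and $w''(t)=c^{2}(1-\varepsilon)e^{-ct}$ gives
\[
w''(t)w(t)-w'(t)^{2} = c^{2}(1-\varepsilon)e^{-ct}\bigl[(1-\varepsilon)e^{-ct}+\varepsilon\bigr] - c^{2}(1-\varepsilon)^{2}e^{-2ct} = c^{2}\varepsilon(1-\varepsilon)e^{-ct},
\]
which is strictly positive. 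Combined with Step~2 and Step~1, this establishes \eqref{eq:f_tmax_bound} and completes the proof.

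\textbf{Anticipated obstacle.} The only nontrivial point is the convexity computation in Step~3: one could be tempted to try a monotonicity argument on $W$ directly, but $W(t)=-t^{-1}\log w(t)$ is awkward because it involves a quotient. The convex-function-with-matching-endpoints trick in Step~2 avoids this entirely and replaces the analysis of $W$ with the much cleaner second-derivative check on $\log w$.
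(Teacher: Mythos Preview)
Your proof is correct. Both your argument and the paper's hinge on the same computation
\[
w''(t)w(t)-w'(t)^{2}=c^{2}\varepsilon(1-\varepsilon)e^{-ct}>0,
\]
but you package it differently. The paper does exactly what you anticipated in your ``obstacle'' remark: it works with $W(t)=-t^{-1}\log w(t)$ directly, introduces the auxiliary function $\Lambda(t)=-tw'(t)/w(t)+\log w(t)$, shows $\Lambda'<0$ via the identity above, uses $\Lambda(0)=0$ to get $\Lambda<0$, and hence $W'<0$; monotonicity of $W$ then gives $W(t)\ge W(\tau_{\max})=\gamma$. Your route is more streamlined: the same identity says $\log w$ is strictly convex, and the chord inequality between the two zeros of $\phi(t)=\log w(t)+\gamma t$ finishes immediately. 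In effect, the paper's monotonicity of $W$ is the ``increasing secant slope from the origin'' consequence of the convexity you establish, so your argument is a cleaner repackaging of the same analytic content, trading the two-layer $\Lambda$--$W$ chain for a single convexity/endpoint step.
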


Second, we focus on 
the first sampling time $t_1$ and 
show that statement (i) of Theorem~\ref{thm:log_case} is true in the case $k=0$. 
To this end, we use the condition \eqref{eq:rho_sigma_ineq}, i.e., 
the first inequality in \eqref{eq:thres_cond}.
\begin{lemma}
	\label{lem:mim_inter_event}
	Suppose that Assumptions~\ref{assump:closed}, \ref{assump:open}, and 
	\ref{assump:initial_cond} hold.
	Let 
	$q_0 \coloneqq Q_{\log}(x_0)$ for some
	$x_0  \in \bcl(R / L_{\cl})$.
	Assume that  the quantization density $\rho \in (0,1)$ and
	the threshold parameter $\sigma \in (0,\sigma_0]$
	satisfy the condition
	\eqref{eq:rho_sigma_ineq}.
	Then 
	the sampling time $t_1$ defined by the STM~\eqref{eq:STM} satisfies
	\begin{equation}
		\label{eq:t1_ineq}
		t_1 \geq \min\{ \tau_{\max},\,
		\tau_{\min},\, \widetilde \tau_{\min}
		\} > 0.
	\end{equation}
\end{lemma}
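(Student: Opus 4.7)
The plan is to track the first inter-sampling time $\tau_0$ (so that $t_1 = \tau_0$) by examining the two triggering conditions \eqref{eq:log_SMT2} and \eqref{eq:log_SMT3} separately, and to show that each of them individually holds at least up to one of the three constants $\tau_{\max}$, $\tau_{\min}$, $\widetilde\tau_{\min}$. I would lift the candidate quantized state $q_0$ to a safe region through Lemma~\ref{lem:Qlog_prop}, then combine Lemmas~\ref{lem:is_diff} and \ref{lem:open_loop_bound} with the equation defining $\tau_{\min}$.

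First, since $x_0 \in \bcl(R/L_{\cl})$, Lemma~\ref{lem:Qlog_prop} yields $q_0 = Q_{\log}(x_0) \in \bcl(\lambda R)$ for the $\lambda$ fixed in the statement. Because $\|x\|_{\op} \leq \Gamma \|x\|_{\cl}$ and $\Gamma R \leq R_2$ by the definition of $R$, this gives $\|q_0\|_{\op} < \Gamma \lambda R \leq \lambda R_2$, i.e.\ $q_0 \in \bop(\lambda R_2)$. Lemma~\ref{lem:open_loop_bound} then produces a unique solution $x_{q_0}$ of \eqref{eq:x_q_ODE} on $[0,\widetilde\tau_{\min}]$ with $x_{q_0}(\tau) \in \bop(R_2)$, i.e.\ $\|x_{q_0}(\tau)\|_{\op} < R_2$, for every $\tau \in [0,\widetilde\tau_{\min}]$. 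Reading the definition \eqref{eq:log_SMT3} this immediately implies $\widetilde\tau_0 \geq \min\{\tau_{\max},\widetilde\tau_{\min}\}$.

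Next I would estimate $\psi_{\log}(\tau,q_0)$ on $[0,\widetilde\tau_0)$. Applying Lemma~\ref{lem:is_diff} to $x_{q_0}$ yields $\|x_{q_0}(\tau) - q_0\|_{\op} \leq \nu(\tau)\|q_0\|_{\op}$, so
\[
\psi_{\log}(\tau,q_0) \leq \left(\frac{L_{\op}(1-\rho)}{1+\rho}e^{d_1\tau} + \nu(\tau)\right)\|q_0\|_{\op} \leq \Gamma\left(\frac{L_{\op}(1-\rho)}{1+\rho}e^{d_1\tau} + \nu(\tau)\right)\|q_0\|_{\cl}.
\]
The bracketed function is continuous and strictly increasing in $\tau$, starts at $L_{\op}(1-\rho)/(1+\rho)$, and equals $\sigma/\Gamma$ at $\tau = \tau_{\min}$; the hypothesis \eqref{eq:rho_sigma_ineq} is precisely what makes $\tau_{\min} > 0$. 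Hence, for all $\tau \in [0,\tau_{\min})$, $\psi_{\log}(\tau,q_0) \leq \sigma\|q_0\|_{\cl}$. Reading \eqref{eq:log_SMT2} this gives $\tau_0 \geq \min\{\widetilde\tau_0,\tau_{\min}\}$, and combined with the previous step,
\[
t_1 = \tau_0 \geq \min\{\tau_{\max},\widetilde\tau_{\min},\tau_{\min}\} > 0,
\]
as desired. The degenerate case $q_0 = 0$ is covered automatically since then $x_{q_0} \equiv 0$ and the triggering condition \eqref{eq:log_SMT2} is trivially satisfied (both sides are zero) on $(0,\widetilde\tau_0)$.

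I do not expect a serious obstacle here; the argument is essentially a bookkeeping exercise that glues together the open-loop estimates already established. The only subtlety that needs care is the inclusion $\bcl(\lambda R) \subseteq \bop(\lambda R_2)$, which needs the specific choice $R = \min\{R_1, R_2/\Gamma\}$ so that Lemma~\ref{lem:open_loop_bound} is actually applicable to $q_0$ and one obtains strict, not merely non-strict, inequality $\|x_{q_0}(\tau)\|_{\op} < R_2$ on the closed interval $[0,\widetilde\tau_{\min}]$, which is what \eqref{eq:log_SMT3} demands on the open interval $(0,\tau_{\max})$.
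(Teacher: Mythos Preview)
Your proof is correct and follows essentially the same approach as the paper's own proof: use Lemma~\ref{lem:Qlog_prop} to place $q_0$ in $\bcl(\lambda R)$, invoke Lemma~\ref{lem:open_loop_bound} to bound $\widetilde\tau_0$ from below, then use Lemma~\ref{lem:is_diff} together with $\|q_0\|_{\op}\le\Gamma\|q_0\|_{\cl}$ and the definition of $\tau_{\min}$ to bound $\tau_0$ from below. You are in fact slightly more explicit than the paper in verifying the inclusion $\bcl(\lambda R)\subseteq\bop(\lambda R_2)$ needed to apply Lemma~\ref{lem:open_loop_bound}, and in handling the degenerate case $q_0=0$.
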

\begin{proof}
	Choose 
	a constant $\lambda$ as in Lemma~\ref{lem:Qlog_prop}.
	Then this lemma  shows that $
	q_0 \in  \bcl(\lambda R)
	$
	holds
	under 
	Assumption~\ref{assump:initial_cond}.
	By Lemma~\ref{lem:open_loop_bound},
	the ODE \eqref{eq:x_q_ODE} with $q=q_0$
	has the unique solution $x_{q_0}$ on $[ 0, \widetilde \tau_{\min} ]$, and
	$x_{q_0}(\tau) \in \bop(R_2)$ for all $
	\tau \in [ 0, \widetilde \tau_{\min} ]$.
	Hence, $\widetilde \tau_0$ given by \eqref{eq:log_SMT3} satisfies
	$\widetilde \tau_0 \geq \min\{ \tau_{\max}, \,\widetilde \tau_{\min}\}  > 0$.

	Next, we obtain a lower bound of $\tau_0$, where $\tau_0$ is 
	defined  by \eqref{eq:log_SMT2}.
	For all $\tau \in [0,  \widetilde \tau_0)$,
	Lemma~\ref{lem:is_diff} yields
	\[
	\|x_{q_0}(\tau) - q_0 \|_{\op}  \leq \nu(\tau) \|q_0\|_{\op}
	\]
	and hence $\psi_{\log}$ defined by \eqref{eq:psi_log} satisfies
	\begin{align*}
		\psi_{\log}(\tau,q_0) 
		\leq
		\left(\frac{L_{\op}(1-\rho)}{1+\rho} e^{d_1 \tau}+ 
		\nu(\tau)\right)   \|q_0\|_{\op}.
	\end{align*}
	Since 
	$
	\|q_0\|_{\op} \leq \Gamma \|q_0\|_{\cl},
	$
	the following implication holds  for all $\tau \in [0,  \widetilde \tau_0)$:
	\begin{align*}
		\Gamma
		&\left(\frac{L_{\op}(1-\rho)}{1+\rho} e^{d_1 \tau}+ 
		\nu(\tau)\right)   \|q_0\|_{\cl} \leq \sigma \|q_0\|_{\cl} \\
		&\qquad
		\Rightarrow
		\quad 
		\psi_{\log}(\tau, q_0) \leq \sigma \|q_0\|_{\cl}.
	\end{align*}
	Therefore, 
	$\tau_0 \geq \min\{ \widetilde \tau_0 ,\, \tau_{\min} \}$. 
	In addition,
	the inequality \eqref{eq:rho_sigma_ineq} yields
	$\tau_{\min} > 0$. 
	Thus,
	we obtain the desired conclusion \eqref{eq:t1_ineq}.
\end{proof}

Third, we show that statements (ii) and (iii) 
of Theorem~\ref{thm:log_case} are
true on the first sampling interval $[0,t_1]$.
The stability property follows from the condition $\sigma < \sigma_1$, i.e., the second inequality in \eqref{eq:thres_cond}. 
\begin{lemma}
	\label{lem:first_conv}
	Suppose that Assumptions~\ref{assump:closed}--\ref{assump:control_part}
	and \ref{assump:initial_cond} hold. 
	Assume that the quantization density $\rho \in (0,1)$ and
	the threshold parameter $\sigma\in (0,\sigma_0]$ 
	satisfy the condition
	\eqref{eq:thres_cond}.
	Then 
	there exists a unique 
	solution $x$ of the following ODE  on $[0,t_1]$:
	\begin{equation}
		\label{eq:log_ODE}
		\dot x(t) = f\big(x(t),g(q_0)\big),\quad x(0) = x_0 \in   \bcl( R / L_{\cl}),
	\end{equation}
	where $q_0 \coloneqq Q_{\log}(x_0)$ and $t_1 \in \mathbb{R}_{>0}$ is defined by
	the STM~\eqref{eq:STM}.
	Furthermore,
	the solution $x$ satisfies
	\begin{equation}
		\label{eq:x_conv_log}
		\|x(t)\|_{\cl} \leq 
		e^{-\gamma t} \|x_0\|_{\cl} 
	\end{equation}
	for all $t \in [0 , t_1]$.	
\end{lemma}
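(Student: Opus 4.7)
The plan is to recast the ODE \eqref{eq:log_ODE} in the form $\dot x = F(x,e)$ with the measurement error $e(t) \coloneqq q_0 - x(t)$; since $x+e = q_0$ we have $f(x, g(q_0)) = F(x,e)$. Theorem~\ref{thm:basic_prop_of_contraction3} then becomes applicable once $\|e(t)\|_{\op}$ is controlled, and that control will come from combining Lemma~\ref{lem:x_q_diff} with the STM triggering condition \eqref{eq:log_SMT2}. The decay bound \eqref{eq:x_conv_log} will follow by applying Lemma~\ref{lem:decay_bound} to the scalar envelope produced by Theorem~\ref{thm:basic_prop_of_contraction3}.

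The core argument is a continuation/bootstrap on $[0, t_1]$. Assumption~\ref{assump:open}(i) yields local existence and uniqueness of $x$ near $x_0$. Define
\[
T^{*} \coloneqq \sup\big\{\tau \in [0, t_1] : x \text{ extends to } [0,\tau] \text{ with } x(s) \in \bcl(R) \text{ for all } s \in [0,\tau) \big\},
\]
so $T^{*} > 0$ since $x_0 \in \bcl(R/L_{\cl}) \subsetneq \bcl(R)$. From the proof of Lemma~\ref{lem:mim_inter_event}, $x_{q_0}$ exists and remains in $\bop(R_2)$ on $[0, \widetilde\tau_0] \supseteq [0, t_1]$, while on $[0, T^{*})$ the trajectory $x$ stays in $\bcl(R) \subseteq \bop(R_2)$. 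Thus Lemma~\ref{lem:x_q_diff}, together with the quantization estimate \eqref{eq:log_prop1} bounding $\|x_0 - q_0\|_{\op}$, gives
\[
\|x(s) - q_0\|_{\op} \leq \psi_{\log}(s, q_0) \leq \sigma\|q_0\|_{\cl}, \qquad s \in [0, T^{*}),
\]
the last inequality being \eqref{eq:log_SMT2} (note $T^{*} \leq t_1 = \tau_0$). Lemma~\ref{lem:Qlog_prop} yields $\|q_0\|_{\cl} < R$, so with $\sigma \leq \sigma_0$ one has $e(s) \in \bop(\sigma_0 R)$ throughout.

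Theorem~\ref{thm:basic_prop_of_contraction3} then gives, for $t \in [0, T^{*})$,
\[
\|x(t)\|_{\cl} \leq e^{-ct}\|x_0\|_{\cl} + \frac{\alpha(1-e^{-ct})}{c}\,\sigma\|q_0\|_{\cl}.
\]
Bounding $\|q_0\|_{\cl} \leq \frac{L_{\cl}(1+\rho)}{2\rho}\|x_0\|_{\cl}$ via \eqref{eq:log_prop2} and using the definition of $\sigma_1$ in \eqref{eq:thres_cond}, the coefficient collapses exactly to $w(t) = e^{-ct}(1-\sigma/\sigma_1) + \sigma/\sigma_1$, the scalar function of Lemma~\ref{lem:decay_bound}. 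Since $\sigma < \sigma_1$, $w(t) < 1$ strictly for $t>0$, hence $\|x(t)\|_{\cl} < \|x_0\|_{\cl} < R/L_{\cl}$ on $(0, T^{*})$. If $T^{*} < t_1$, continuity gives $\|x(T^{*})\|_{\cl} \leq w(T^{*})\|x_0\|_{\cl} < R/L_{\cl}$, and local existence at $x(T^{*})$ extends $x$ strictly beyond $T^{*}$ while staying in $\bcl(R)$, contradicting maximality. Therefore $T^{*} = t_1$. Finally, since $t_1 \leq \tau_{\max}$ by the STM design, Lemma~\ref{lem:decay_bound} upgrades the envelope to $w(t) \leq e^{-\gamma t}$ on $[0, t_1]$, which is \eqref{eq:x_conv_log}.

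The principal obstacle is the circular dependence between "$x(t) \in \bcl(R)$" (required to invoke Theorem~\ref{thm:basic_prop_of_contraction3}) and "the contraction bound keeps $x$ inside $\bcl(R)$"; the continuation argument above is what breaks the circularity. A related technicality is verifying $e(s) \in \bop(\sigma_0 R)$ on the whole bootstrap interval, which rests on the small-ball property $\|q_0\|_{\cl} < R$ supplied by Lemma~\ref{lem:Qlog_prop}.
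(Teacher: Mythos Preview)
Your proposal is correct and follows essentially the same approach as the paper. Both arguments break the circular dependence between ``$x(t)\in\bcl(R)$'' and ``the contraction bound holds'' by a bootstrap: the paper argues by contradiction with the first exit time from $\{\|x\|_{\cl}\le\|x_0\|_{\cl}\}$, whereas you phrase it as a forward continuation with the maximal interval on which $x$ remains in $\bcl(R)$; the remaining ingredients (Lemma~\ref{lem:x_q_diff} plus \eqref{eq:log_prop1} to bound $\|e\|_{\op}$ by $\psi_{\log}$, the STM condition \eqref{eq:log_SMT2}, Theorem~\ref{thm:basic_prop_of_contraction3}, the collapse to $w(t)$ via \eqref{eq:log_prop2}, and Lemma~\ref{lem:decay_bound}) are identical.
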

\begin{proof}
	The first sampling time $t_1$
	satisfies $t_1 >0$ by Lemma~\ref{lem:mim_inter_event} and
	$t_1 \leq \tau_{\max}$ by definition.
	We start by showing that 
	the ODE \eqref{eq:log_ODE} has a unique solution $x$ on $[0,t_1]$ and that $x$ satisfies
	\begin{equation}
		\label{eq:xt_x0_bounded}
		\|x(t)\|_{\cl} \leq \|x_0\|_{\cl} \quad \text{for all
			$t \in
			[0, t_1 )$.}
	\end{equation}
	Assume, to get a contradiction, that 
	\begin{itemize}
		\item
		the solution $x$ of 
		the ODE \eqref{eq:log_ODE}
		either does not exist or is not unique 
		on
		$
		[0, t_1]
		$; or that
		\item 
		there exists a unique solution $x$
		of the ODE \eqref{eq:log_ODE}
		on
		$
		[0, t_1]
		$, but
		the solution $x$ does not satisfy the inequality \eqref{eq:xt_x0_bounded}.
	\end{itemize}
	In both scenarios, there exists $s_0 \in (0,t_1)$ such that 
	the ODE \eqref{eq:log_ODE} has 
	a unique solution $x$
	on
	$
	[0, s_0]
	$ satisfying
	$ \|x(s_0)\|_{\cl}  > \|x_0\|_{\cl} $.
	Define
	\[
	s_1 \coloneqq 
	\inf\{t \in \mathbb{R}_{> 0}:
	\|x(t)\|_{\cl} > \|x_0\|_{\cl} 
	\} \in [0,s_0].
	\]
	From the continuity of $x$, we obtain $s_1 < s_0$ and
	\[
	\|x(s_1)\|_{\cl} = 
	\|x_0\|_{\cl} < R.
	\] 
	Hence,  there exists $\delta \in (0, s_0 - s_1)$ such that 
	for all $t \in [0, s_1+\delta)$,
	\begin{equation}
		\label{eq:x_bopR2}
		x(t) \in \bcl(R) \subseteq \bop(R_2).
	\end{equation}
	The unique solution $x_{q_0}(t)$
	of the ODE~\eqref{eq:x_q_ODE}
	with $q = q_0$ exists and also satisfies
	$x_{q_0}(t) \in \bop(R_2)$ 
	for all $t \in [0, t_1)$ by the definition \eqref{eq:log_SMT3} of $\widetilde \tau_1$
	and the inequality $\widetilde \tau_1 \geq \tau_1 = t_1$.
	
	Define $e(t) \coloneqq q_0- x(t)$ for $t \in [0, s_1+\delta)$. 
	Since 
	\[
	x(t), x_{q_0}(t)  \in \bop(R_2)
	\] 
	for all $t \in [0, s_1+\delta)$, 
	Lemma~\ref{lem:x_q_diff} and the STM~\eqref{eq:STM} yield
	\begin{equation}
		\label{eq:error_log}
		\| e(t) \|_{\op} \leq \psi_{\log}(t,q_0) \leq \sigma\|q_0\|_{\cl}
	\end{equation}
	for all $t \in [0, s_1+\delta)$.
	Combining this with 
	\[
	\|q_0\|_{\cl} < R \quad  \text{and}
	\quad 
	\sigma \leq \sigma_0,
	\]
	we have 
	$e(t) \in \bop(\sigma_0R)$ for all $t \in [0, s_1+\delta)$.

	Recall that $x$ is the solution of the ODE
	\[
	\dot x(t) = f\big(x(t),g(q_0)\big) = F\big(x(t),e(t)\big),\quad 
	x(0) = x_0.
	\]
	on $[0,  s_0]$.
	Since
	$x(t) \in \bcl(R)$ for all $t \in [0, s_1+\delta)$, we see from
	Theorem~\ref{thm:basic_prop_of_contraction3}
	and the inequality \eqref{eq:error_log} that 
	under  Assumptions~\ref{assump:closed} and \ref{assump:control_part},
	\begin{align*}
		\|
		x(t)
		\|_{\cl} 
		&\leq e^{-ct} \|x_0\|_{\cl}  + \frac{\sigma  \alpha(1-e^{-ct})}{c} \|q_0\|_{\cl}
	\end{align*}
	holds
	for all $t \in [0, s_1+\delta]$.
	Applying the property \eqref{eq:log_prop2} of the logarithmic quantizer,
	we obtain
	\begin{align}
		\label{eq:xt_bounded_initial}
		\|
		x(t)
		\|_{\cl} 
		&\leq  	\left(
		e^{-ct}
		\left(
		1 - \frac{\sigma}{\sigma_1}
		\right) + 
		\frac{\sigma}{\sigma_1}
		\right) \|x_0\|_{\cl} 
	\end{align}
	for all $t \in [0, s_1+\delta]$, where $\sigma_1$ is as in
	\eqref{eq:thres_cond}.
	From the condition \eqref{eq:thres_cond}, we have $\sigma / \sigma_1 < 1$. Hence 
	$
	\|
	x(t)
	\|_{\cl} \leq \|x_0\|_{\cl}
	$ 
	for all $t \in [0, s_1+\delta]$.
	This contradicts the definition of $s_1$.

	We have shown  that 
	a unique solution $x$
	of the ODE \eqref{eq:log_ODE}
	exists on
	$
	[0, t_1]
	$ and satisfies the inequality \eqref{eq:xt_x0_bounded}.
	Then \eqref{eq:x_bopR2} also holds for all $t \in [0, t_1)$.
	Therefore, we can
	replace $s_1 + \delta $ by $t_1$ in the above argument, and
	the inequality
	\eqref{eq:xt_bounded_initial} is satisfied for all $t \in [0, t_1]$.
	By Lemma~\ref{lem:decay_bound}, the constant $\gamma $ defined 
	by \eqref{eq:gamma_def} satisfies
	\[
	e^{-ct}
	\left(
	1 - \frac{\sigma}{\sigma_1}
	\right) + 
	\frac{\sigma}{\sigma_1}
	\leq e^{-\gamma t}
	\]
	for all $t \in [0, \tau_{\max}]$.
	Thus,
	the desired inequality \eqref{eq:x_conv_log} holds for all $t \in [0, t_1]$.
\end{proof}

After these preparations,
we are now ready to prove Theorem~\ref{thm:log_case}.

\noindent\hspace{1em}{\textit{Proof of Theorem~\ref{thm:log_case}.} }
Let $x_0 \in \bcl(R / L_{\cl})$. 
By Lemma~\ref{lem:first_conv}, 
\[
\|x(t_1)\|_{\cl} \leq e^{-\gamma t_1} \|x_0\|_{\cl},
\]
and hence $x(t_1) \in \bcl(R / L_{\cl})$. 
Lemma~\ref{lem:mim_inter_event} shows that 
\[
t_2 - t_1 \geq \min\{ \tau_{\max},\,
\tau_{\min},\, \widetilde \tau_{\min}
\} > 0.
\] 
Using Lemma~\ref{lem:first_conv} again, we have that
there exists a unique solution $x$ of the closed-loop system \eqref{eq:closed_loop}
on $[0,t_2]$ and that $x$ satisfies
\[
\|x(t)\|_{\cl} \leq e^{-\gamma (t-t_1)} \|x(t_1)\|_{\cl}
\]
for all $t \in [t_1, t_2]$.
Repeating this argument yields the desired conclusions.
\hspace*{\fill} $\blacksquare$

We conclude this section by making a  remark on discretization of inter-sampling
times, which is applied when the STM sends inter-sampling times through
communication channels.
\begin{remark}[Discretization of inter-sampling times]
	The inter-sampling times generated by the STM \eqref{eq:STM} can take values
	on the interval $[\min \{ \tau_{\min},\,\widetilde \tau_{\min} \}, \tau_{\max}]$.
	The proposed STM can be easily modified so that 
	inter-sampling times belong to a finite or countable set 
	$S \subset \mathbb{R}_{>0}$.
	Assume that 
	\[
	0< \inf S \leq 
	\min \{ \tau_{\min},\,\widetilde \tau_{\min} \} \leq \sup S \leq \tau_{\max}.
	\]
	We define the new $k$-th inter-sampling time $\tau_{\text{dis},k}$ by
	\[
	\tau_{\text{dis},k} \coloneqq \sup \{ \tau \in S : \tau \leq \tau_k\},
	\]
	where $\tau_k$ is determined by the STM \eqref{eq:STM}.
	Then 
	the argument in this section shows that the same result as in
	Theorem~\ref{thm:log_case} holds for
	the discretized inter-sampling times
	$\{ \tau_{\text{dis},k} \}_{k \in \mathbb{Z}_{\geq 0}}$, although the sampling frequency increases.
	\hspace*{\fill} $\triangle$ 
\end{remark}

\section{Self-triggered stabilization under zooming quantization}
\label{sec:zoom_case}
In this section, we study the co-design of 
zooming quantization and self-triggered sampling 
for contracting systems.
First, we briefly explain the zooming quantizer introduced in \cite{Brockett2000,Liberzon2003Automatica}
and make an assumption on the initial zoom parameter.
Second, we propose an STM that uses the zoom parameter for 
a threshold of measurement errors.
Finally, we present an update rule for the zoom parameter and
give a sufficient condition for stabilization.

\subsection{System model}
\subsubsection{Zooming quantization}
Let $M,\Delta \in \mathbb{R}_{>0}$. 
Using the norms in Assumptions~\ref{assump:closed} and \ref{assump:open},
we assume that 
the quantization function $Q \colon \mathbb{R}^n \to \mathbb{R}^n$
satisfies $\|Q(x) - x\|_{\op} \leq \Delta$ if $\|x\|_{\cl} < M$.
Notice that we use the norm $\|\cdot\|_{\cl}$ for quantization ranges but
the norm $\|\cdot\|_{\op}$ for quantization errors.

For a fixed $\mu \in \mathbb{R}_{>0}$,
define the function $Q_{\mu}$ by
\[
Q_\mu(x) \coloneqq \mu Q\left(\frac{x}{\mu} \right),\quad 
x \in \mathbb{R}^n.
\]
Then $Q_{\mu}$ satisfies
\begin{equation}
	\label{eq:Qmu_cond}
	\|Q_{\mu}(x) - x \|_{\op} \leq \Delta \mu\quad \text{for all $x \in \bcl(M\mu)$}.
\end{equation}
We call $\mu$ {\em the zoom parameter}. We make an assumption on an initial zoom parameter.
\begin{assumption}
	\label{assump:initial}
	The initial zoom parameter $\mu_0 \in \mathbb{R}_{>0}$ satisfies
	\begin{equation}
		\label{eq:mu0_def}
		\mu_0 < \frac{R}{M+\Gamma \Delta},
	\end{equation}
	where the constants $\Gamma, R \in \mathbb{R}_{>0}$ are  as in
	Section~\ref{sec:nonlinear_assump}.
\end{assumption}

The following lemma shows that 
the quantized value of $x \in \bcl(M\mu_0)$ belongs to a ball
strictly smaller 
than $\bcl(R)$ under Assumption~\ref{assump:initial}.
\begin{lemma}
	\label{lem:lambda_choice}
	Suppose that Assumptions~\ref{assump:closed}, \ref{assump:open}, and \ref{assump:initial} hold, and let
	$\mu \in (0, \mu_0]$.
	Then the constant $\lambda$ defined by
	\begin{equation}
		\label{eq:lambda_def_TV}
		\lambda \coloneqq \frac{\mu_0}{R} (M+\Gamma \Delta) 
	\end{equation}
	satisfies $\lambda < 1$. Moreover,
	$Q_{\mu}(x) \in \bcl(\lambda R)$
	for all $x\in \bcl(M\mu)$.
\end{lemma}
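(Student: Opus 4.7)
The claim splits cleanly into two essentially independent pieces: checking $\lambda<1$, and bounding $\|Q_\mu(x)\|_{\cl}$ by $\lambda R$ for $x\in\bcl(M\mu)$.

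For the first piece, I simply rearrange Assumption~\ref{assump:initial}. The bound $\mu_0<R/(M+\Gamma\Delta)$ is equivalent to $\mu_0(M+\Gamma\Delta)/R<1$, which is exactly $\lambda<1$ by the definition \eqref{eq:lambda_def_TV}. No further input is needed here.

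For the second piece, I fix $\mu\in(0,\mu_0]$ and $x\in\bcl(M\mu)$, then estimate $Q_\mu(x)$ via the triangle inequality
\[
\|Q_\mu(x)\|_{\cl}\leq\|x\|_{\cl}+\|Q_\mu(x)-x\|_{\cl}.
\]
The first summand is controlled directly by hypothesis, $\|x\|_{\cl}<M\mu$. For the second summand, observe that $\|x\|_{\cl}<M\mu$ is precisely the hypothesis under which the zoomed quantizer bound \eqref{eq:Qmu_cond} applies, yielding $\|Q_\mu(x)-x\|_{\op}\leq\Delta\mu$. I then invoke the norm-equivalence constant $\Gamma$ between $\|\cdot\|_{\op}$ and $\|\cdot\|_{\cl}$ introduced in Section~\ref{sec:nonlinear_assump} to convert this $\op$-bound into a $\cl$-bound of magnitude $\Gamma\Delta\mu$ on the quantization error. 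Adding the two estimates and using $\mu\leq\mu_0$ gives
\[
\|Q_\mu(x)\|_{\cl}<(M+\Gamma\Delta)\mu\leq(M+\Gamma\Delta)\mu_0=\lambda R,
\]
so $Q_\mu(x)\in\bcl(\lambda R)$.

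The proof is ultimately a one-line triangle inequality calculation, and I do not expect any real obstacle. The only point worth a moment of care is the source of the strict inequality in the final display: it is inherited from the strict inclusion $x\in\bcl(M\mu)$ and survives the addition of the finite (but non-strict) quantization-error bound $\Gamma\Delta\mu$. This parallels the role of Lemma~\ref{lem:Qlog_prop} in the logarithmic case, and delivers exactly what the subsequent self-triggered analysis needs in order to apply Lemma~\ref{lem:open_loop_bound} with this value of $\lambda$ on a predicted trajectory starting from $Q_\mu(x)$.
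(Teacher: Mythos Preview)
Your proof is correct and follows essentially the same route as the paper: first read off $\lambda<1$ directly from Assumption~\ref{assump:initial}, then apply the triangle inequality $\|Q_\mu(x)\|_{\cl}\leq\|Q_\mu(x)-x\|_{\cl}+\|x\|_{\cl}$, bound the quantization error in $\|\cdot\|_{\cl}$ via $\Gamma$ and \eqref{eq:Qmu_cond}, and finish with $\mu\leq\mu_0$. The paper's argument is identical in structure and detail.
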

\begin{proof}
	The inequality $\lambda < 1$ immediately follows from
	\eqref{eq:mu0_def}.
	Let $x\in \bcl(M\mu)$ for some $\mu\in\mathbb{R}_{>0}$. By
	\eqref{eq:Qmu_cond},
	\begin{equation*}
		\|Q_{\mu}(x) \|_{\cl} \leq 
		\|Q_{\mu}(x) - x\|_{\cl} + \|x\|_{\cl} <
		\Gamma \Delta \mu+ M\mu.
	\end{equation*}
	If $\mu \leq \mu_0$, then
	this and \eqref{eq:lambda_def_TV} 
	yield $\|Q_{\mu}(x) \|_{\cl} <  \lambda R$.
\end{proof}

\subsubsection{Self-triggered control system}
Let 
$\{t_k\}_{k \in \mathbb{Z}_{\geq 0} } 
$  be a strictly increasing sequence with $t_0 \coloneqq 0$.
We consider the following closed-loop system:
\begin{equation}
	\label{eq:closed_TV}
	\left\{
	\begin{aligned}
		\dot x(t) &= f\big(x(t),u(t)\big), & & t \in \mathbb{R}_{\geq 0};\qquad x(0) = x_0\\
		u(t) &= g(q_k),& &
		t \in [t_k, t_{k+1}),~k \in \mathbb{Z}_{\geq 0} \\
		q_k &= Q_{\mu_k}\big(x(t_k)\big),& & k \in \mathbb{Z}_{\geq 0},
	\end{aligned}
	\right.
\end{equation}
where $\mu_k \in \mathbb{R}_{>0}$ is the $k$-th zoom parameter for $k \in \mathbb{Z}_{\geq 0}$.

In this section, 
the sampling times $\{t_k\}_{k \in \mathbb{Z}_{\geq 0} }$  are
chosen from the discrete 
set $\{p h: p \in \mathbb{Z}_{\geq 0} \}$ for some
$h \in \mathbb{R}_{> 0}$. We refer to $h$ as the period for the STM.
Let $\ell_{\max} \in \mathbb{N}$ and
\[
\floor_h(\tau) \coloneqq \max\{\ell \in \mathbb{Z}: \ell h \leq \tau \},
\] 
for $\tau \in \mathbb{R}$,
that is,
$\floor_h(\tau) $ is the greatest integer less than or equal to $\tau / h $.
The inequalities \eqref{eq:x_q_diff}  and 
\eqref{eq:Qmu_cond} motivates us 
to define the function $\psi_{\text{zo}}$ by
\begin{equation}
	\label{eq:psi_dyn}
	\psi_{\text{zo}}(\tau, q, \mu) \coloneqq  
	\Delta \mu  e^{d_1 \tau}+  
	\|x_q(\tau) - q \|_{\op}
\end{equation}
for $\tau\in \mathbb{R}_{\geq0}$ and $q \in \bcl(R)$
such that there exists a unique solution $x_q$ of the ODE \eqref{eq:x_q_ODE} on $[0,\tau]$.
We employ the following STM:
\begin{subequations}
	\label{eq:STM_tm}
	\begin{empheq}[left = {\empheqlbrace \,}, right = {}]{align}
		t_{k+1} &\coloneqq t_k + \ell_k h \label{eq:tv_SMT1}\\
		\ell_k &\coloneqq 
		\begin{cases}
			\widetilde \ell_k&
			\text{if $\psi_{\text{zo}}(\tau, q_k, \mu_k) \leq \sigma M\mu_k$
				for all $\tau \in (h, \widetilde \ell_k h)$} \\
			\floor_h(\inf \{
			\tau >h:
			\psi_{\text{zo}}( \tau, q_k, \mu_k) > \sigma M \mu_k 
			\})& \text{otherwise}  
			\label{eq:tv_SMT2}
		\end{cases} \\
		\widetilde \ell_k &\coloneqq 
		\begin{cases}
			\ell_{\max} &
			\text{if $\|x_{q_k}(\tau) \|_{\op} < R_2$
				for all $\tau \in (h, \ell_{\max} h)$} \\
			\floor_h (\inf \{
			\tau >h :
			\|x_{q_k}(\tau)  \|_{\op} \geq R_2
			\}) \hspace{26pt} & \text{otherwise}, \label{eq:tv_SMT3}
		\end{cases}
	\end{empheq}
\end{subequations}
where  $x_{q_k}$ is the solution of the ODE \eqref{eq:x_q_ODE}  with $q=q_k$.
Since $1 \leq \ell_k \leq \widetilde \ell_k \leq \ell_{\max}$
by construction, we have 
\[
h \leq t_{k+1} - t_k \leq \ell_{\max}h
\]
for all $k \in \mathbb{Z}_{\geq 0}$.
Hence Zeno behavior does not occur.
Fig.~\ref{fig:closed_loop2} illustrates the closed-loop system
and the data flow in the networked control framework.
Note that 
$\ell_k$ is transmitted not 
only to the sensor but also the encoder and the decoders, because
$\ell_k$ 
is used to update
the zoom parameter $\mu_k$.

\begin{figure}[bt]
	\centering
	\includegraphics[width = 7.5cm]{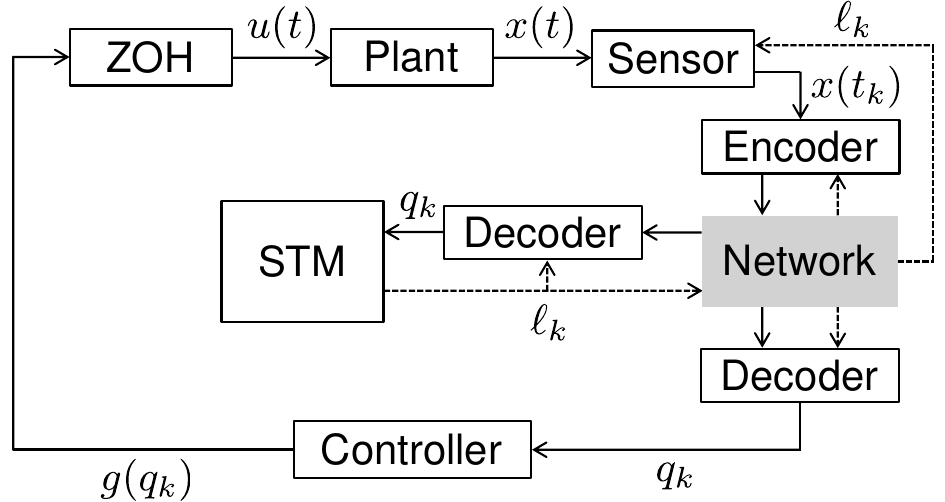}
	\caption{Closed-loop system in networked control framework. Encoding and decoding of $\ell_k$ are omitted for brevity.}
	\label{fig:closed_loop2}
\end{figure}

\stepcounter{equation}
The STM~\eqref{eq:STM_tm} aims at guaranteeing that the measurement error
$\| x(t_k+\tau) -q_k\|_{\op}$ is upper-bounded by
$\sigma M \mu_k$ for all $\tau \in [0, t_{k+1} - t_k)$ and 
$k \in \mathbb{Z}_{\geq 0}$. 
The roles of the triggering conditions in \eqref{eq:tv_SMT2}
and \eqref{eq:tv_SMT3} are the same as those in the case of 
logarithmic quantization.
In fact,
\eqref{eq:tv_SMT2}
yields
\begin{equation}
	\label{eq:psi_zo_bound_explanation}
	\psi_{\text{zo}}(\tau, q_k, \mu_k) \leq \sigma  M \mu_k
\end{equation}
for all $\tau \in (h, \ell_kh)$.
If 
$h$ is sufficiently small, then
the ODE \eqref{eq:x_q_ODE} with $q=q_k$
has a unique solution $x_{q_k}$ on $[0,h]$ satisfying
$x_{q_k}(\tau) \in \bop(R_2)$ for all $\tau \in [0,h]$, and
the inequality \eqref{eq:psi_zo_bound_explanation} 
holds on $[0,h]$,
as will be discussed in Lemma~\ref{lem:0h_bound} below.
By \eqref{eq:tv_SMT3},
the solution $x_{q_k}$ can be continued to 
$[0,\widetilde \ell_k h]$ uniquely, and 
furthermore,
$x_{q_k} (\tau) \in \bop(R_2)$ 
for all $\tau \in [0, \widetilde \ell_k h)$.
From this fact and Lemma~\ref{lem:x_q_diff}, we will see that
\[
\|x(t_k+\tau) - q_k\|_{\op} \leq \psi_{\text{zo}}(\tau, q_k, \mu_k) 
\]
for all $\tau \in [0, \ell_kh)$.

\subsection{Stability analysis}
\subsubsection{Main result in zooming quantization case}
We present the main result of this section.
The following theorem 
gives an update rule for the zoom parameter $\mu_k$,
which guarantees that 
state trajectories starting in $\bcl(M\mu_0)$ exponentially converge
to the origin under the STM~\eqref{eq:STM_tm}.
\begin{theorem}
	\label{thm:varying_case}
	Suppose that Assumptions~\ref{assump:closed}--\ref{assump:control_part} and \ref{assump:initial}  hold, and define $\lambda \in (0,1)$ and $\widetilde \tau_{\min} \in \mathbb{R}_{>0}$
	as in \eqref{eq:lambda_def_TV} and \eqref{eq:tilde_tau_def}, respectively. Assume that
	the parameters $(M,\Delta) \in \mathbb{R}_{>0}^2$ of quantization and 
	the parameters $(h,\sigma) \in (0,\widetilde \tau_{\min}]
	\times (0,\sigma_0]$ of self-triggered sampling satisfy
	\begin{align}
		\label{eq:thres_cond_varying}
		\frac{\Delta}{M} \big(e^{d_1 h}+\nu(h) \big) + \Gamma \nu(h) 
		\leq 
		\sigma < \frac{c}{\alpha}.
	\end{align}
	If the zoom parameter $\mu_k$ is defined by
	\begin{equation}
		\label{eq:zoom_parameter_update}
		\mu_{k+1} \coloneqq \left(
		e^{-c(t_{k+1}-t_k)}
		\left(
		1 - \frac{\alpha  \sigma}{c}
		\right) + 
		\frac{\alpha  \sigma}{c}
		\right) \mu_k
	\end{equation}
	for $k \in \mathbb{Z}_{\geq 0}$,
	then the closed-loop system \eqref{eq:closed_TV} with STM~\eqref{eq:STM_tm} has
	the following properties
	for every initial state $x_0 \in \bcl (M \mu_0)$
	and upper bound $\ell_{\max} \in \mathbb{N}$ of $\{\ell_k \}_{k \in \mathbb{Z}_{\geq 0}}$:
	\begin{enumerate}
		\renewcommand{\labelenumi}{(\roman{enumi})}
		\item There exists a unique solution $x$ of the closed-loop system \eqref{eq:closed_TV} with STM~\eqref{eq:STM_tm} on $\mathbb{R}_{\geq 0}$.
		\item
		The solution $x$ satisfies
		\[
		\|x(t)\|_{\cl} < 
		M\mu_0  e^{-\gamma t} 
		\]
		for all $t \in \mathbb{R}_{\geq 0}$, where
		\begin{equation}
			\label{eq:bound_decay_rate_zo}
			\gamma \coloneqq \frac{-1}{\ell_{\max} h }
			\log
			\left( e^{-c\ell_{\max} h}
			\left(
			1 - \frac{\alpha\sigma }{c }  
			\right) + 
			\frac{\alpha\sigma }{c }   \right)>0.
		\end{equation}
	\end{enumerate}
\end{theorem}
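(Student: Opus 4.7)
The plan is to mirror the logarithmic-case proof (Theorem~\ref{thm:log_case}) by induction on the sampling index $k$, carrying the invariant $\|x(t_k)\|_{\cl} \leq M\mu_k$ and $\mu_k \leq \mu_0$. Since $\sigma < c/\alpha$, the coefficient in the update rule \eqref{eq:zoom_parameter_update} lies strictly between $\alpha\sigma/c$ and $1$, so $\{\mu_k\}$ is monotonically decreasing and bounded above by $\mu_0$, which propagates the second part of the invariant; consequently $x(t_k) \in \bcl(M\mu_k) \subseteq \bcl(M\mu_0)$, and Lemma~\ref{lem:lambda_choice} delivers $q_k \in \bcl(\lambda R)$. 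Feeding this into Lemma~\ref{lem:open_loop_bound} shows that the predicted trajectory $x_{q_k}$ exists on $[0,\widetilde\tau_{\min}]$ with values in $\bop(R_2)$, and since $h \leq \widetilde\tau_{\min}$, this guarantees $\widetilde\ell_k \geq 1$, so the STM~\eqref{eq:STM_tm} is well defined at every step.

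The next task is to verify the measurement-error bound $\psi_{\text{zo}}(\tau,q_k,\mu_k) \leq \sigma M \mu_k$ on the whole sub-interval $(0, t_{k+1}-t_k]$, not only on $(h, \ell_k h)$ as given directly by \eqref{eq:tv_SMT2}. For $\tau \in (0, h]$ the strategy is to exploit monotonicity of $\psi_{\text{zo}}$ in $\tau$ and combine $\|q_k\|_{\op} \leq \Gamma\|x(t_k)\|_{\cl} + \Delta \mu_k \leq (\Gamma M + \Delta)\mu_k$ with Lemma~\ref{lem:is_diff} to get
\[
\psi_{\text{zo}}(h, q_k, \mu_k) \leq \Delta \mu_k e^{d_1 h} + \nu(h)(\Gamma M + \Delta)\mu_k = \bigl(\tfrac{\Delta}{M}(e^{d_1 h}+\nu(h)) + \Gamma \nu(h)\bigr) M \mu_k,
\]
which is bounded by $\sigma M \mu_k$ by the first inequality in \eqref{eq:thres_cond_varying}. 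This is the role of the auxiliary Lemma~\ref{lem:0h_bound} already flagged in the text.

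With the uniform error bound in hand, the remainder of the inductive step copies the bootstrap argument of Lemma~\ref{lem:first_conv}: assuming existence up to some $s_0 \in (t_k, t_{k+1})$ and maximality of $s_1$, one uses Lemma~\ref{lem:x_q_diff} (valid because $\bcl(R) \subseteq \bop(R_2)$ by the definition of $R$) to transport the predicted-trajectory estimate to the closed-loop trajectory, yielding $\|x(t_k+\tau) - q_k\|_{\op} \leq \sigma M \mu_k$ and hence, via $\sigma \leq \sigma_0$, an admissible measurement error for Theorem~\ref{thm:basic_prop_of_contraction3}. That theorem then gives
\[
\|x(t_k+\tau)\|_{\cl} \leq \bigl(e^{-c\tau}(1-\alpha\sigma/c) + \alpha\sigma/c\bigr) M\mu_k,
\]
contradicting the supposed blow-up and producing existence and uniqueness on $[t_k, t_{k+1}]$; evaluating at $\tau = \ell_k h$ recovers $\|x(t_{k+1})\|_{\cl} \leq M\mu_{k+1}$ exactly by the update rule \eqref{eq:zoom_parameter_update}, closing the induction and proving statement (i).

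Finally, for statement (ii), applying Lemma~\ref{lem:decay_bound} with $\varepsilon = \alpha\sigma/c$ and $\tau_{\max} = \ell_{\max}h$ upgrades the previous display to $\|x(t_k+\tau)\|_{\cl} \leq M \mu_k e^{-\gamma \tau}$ on $[0, \ell_k h]$; the same lemma also gives $\mu_{k+1} \leq e^{-\gamma \ell_k h}\mu_k$, so an inductive telescoping yields $\mu_k \leq \mu_0 e^{-\gamma t_k}$ and thereby the global bound $\|x(t)\|_{\cl} < M\mu_0 e^{-\gamma t}$, with strict inequality inherited from $x_0 \in \bcl(M\mu_0)$. The two delicate points are (a) establishing the $\tau \in (0, h]$ piece of the $\psi_{\text{zo}}$ bound, which is the raison d'être of the first inequality in \eqref{eq:thres_cond_varying}, and (b) the bootstrap contradiction that simultaneously keeps $x(t)$ inside $\bcl(R)$ and $x_{q_k}(t-t_k)$ inside $\bop(R_2)$ while the error estimate is being built up.
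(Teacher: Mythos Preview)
Your proposal is correct and follows essentially the same route as the paper: the paper packages your $(0,h]$-interval argument as Lemma~\ref{lem:0h_bound} and your bootstrap-plus-contraction step as Lemma~\ref{lem:conv_TV_kth}, then runs the same induction and applies Lemma~\ref{lem:decay_bound} exactly as you describe. Two cosmetic points: the invariant should be the strict inequality $x(t_k)\in\bcl(M\mu_k)$ (needed for the quantizer error bound \eqref{eq:Qmu_cond} and for Lemma~\ref{lem:lambda_choice}), and it is the upper bound $\Delta\mu_k e^{d_1\tau}+\nu(\tau)\|q_k\|_{\op}$ from Lemma~\ref{lem:is_diff}, not $\psi_{\text{zo}}$ itself, that is monotone in $\tau$---but your displayed computation already uses only that.
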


In Theorem~\ref{thm:varying_case},
the state norm $\|x(t)\|_{\cl}$ is upper-bounded
by using the initial state bound $M\mu_0$, instead of
the initial state norm $\|x_0\|_{\cl}$.
Consequently,
the decrease of $\|x(t)\|_{\cl}$ might not be monotonic 
in the zooming quantization case, unlike the logarithmic quantization case.

The zooming quantizer and 
the STM \eqref{eq:STM_tm} interact with each other.
In fact,
the zoom parameter of the quantizer decreases
depending on inter-sampling times; see \eqref{eq:zoom_parameter_update}.
On the other hand, the STM \eqref{eq:STM_tm} uses the zoom parameter
for a threshold.

\begin{remark}[Parameter dependency]
	As the  number 
	of quantization levels increases, 
	the fraction
	$\Delta / M$ in 
	\eqref{eq:thres_cond_varying} becomes smaller.
	Therefore, 
	a smaller threshold parameter $\sigma$
	can be chosen when finer quantization is applied.
	Moreover, as $\sigma$ becomes smaller, 
	the zoom parameter $\mu_k$ and therefore quantization errors
	decrease faster. 
	The period $h$ for the STM  has to satisfy $h \leq \widetilde \tau_{\min}$.
	This upper bound $\widetilde \tau_{\min}$ becomes larger
	as 
	the quantization parameters $M$, $\Delta$, and $\mu_0$  decrease, 
	where
	we used the relation between $\lambda$ and the quantization parameters; see
	\eqref{eq:lambda_def_TV}.
	Note that, as
	$M$ and $\mu_0$ decrease,
	the region $\bcl (M \mu_0)$ for initial states shrinks.
	As in the logarithmic quantization case,
	the upper bound $\ell_{\max}$ of $\{\ell_k \}_{k \in \mathbb{Z}_{\geq 0}}$
	appears only in the definition \eqref{eq:bound_decay_rate_zo}
	of the upper bound $\gamma$ on the 
	decay rate. \hspace*{\fill} $\triangle$
\end{remark}

\subsubsection{Proof of main result}
To prove Theorem~\ref{thm:varying_case},
we need two lemmas. We first
note that the  STM~\eqref{eq:STM_tm} does not check the conditions
for $\tau \in [0, h]$.
This is because the triggering conditions are not satisfied  on the interval $[0,h]$ if 
the first inequality in \eqref{eq:thres_cond_varying} holds, as shown in 
the following lemma.
\begin{lemma}
	\label{lem:0h_bound}
	Suppose that Assumptions~\ref{assump:closed} and \ref{assump:open}  hold.
	Let $\lambda \in (0,1)$, and define 
	$\widetilde \tau_{\min} \in \mathbb{R}_{>0}$
	by \eqref{eq:tilde_tau_def}. 
	Assume that 
	the parameters $(M,\Delta) \in \mathbb{R}_{>0}^2$ of quantization and 
	the parameters $(h,\sigma) \in  (0,\widetilde \tau_{\min}]
	\times (0,\sigma_0]$ of self-triggered sampling satisfy 
	\begin{equation}
		\label{eq:sigma_lower_bound}
		\frac{\Delta}{M} \big(e^{d_1 h}+\nu(h)\big) + \Gamma \nu(h) 
		\leq 
		\sigma.
	\end{equation}
	Then the following statements hold for all $q \in \bcl(\lambda R)$:
	\begin{enumerate} 
		\renewcommand{\labelenumi}{(\roman{enumi})}
		\item 
		There exists a unique solution $x_q$ of the ODE \eqref{eq:x_q_ODE} on $[0,h]$,
		and 
		$x_{q}(\tau)  \in \bop(R_2)$ for all 
		$\tau \in [0, h]$.
		\item 
		Let $\mu \in \mathbb{R}_{>0}$. 
		If $q = Q_{\mu}(x)$ for some $x \in \bcl(M\mu)$, then
		\[
		\psi_{\text{zo}}(\tau,q,\mu) < \sigma M \mu
		\]
		for all $\tau \in [0, h]$.
	\end{enumerate}
\end{lemma}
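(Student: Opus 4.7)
My plan is to derive (i) directly from Lemma~\ref{lem:open_loop_bound} once I verify that $\bcl(\lambda R) \subseteq \bop(\lambda R_2)$, and then derive (ii) by plugging the bounds of Lemma~\ref{lem:is_diff} and the zooming-quantizer error bound \eqref{eq:Qmu_cond} into the definition \eqref{eq:psi_dyn} of $\psi_{\text{zo}}$ and using monotonicity in $\tau$.

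\textbf{Step 1: the inclusion and statement (i).} For any $q \in \bcl(\lambda R)$, the equivalence constant $\Gamma$ together with $R \leq R_2/\Gamma$ gives $\|q\|_{\op} \leq \Gamma \|q\|_{\cl} < \Gamma \lambda R \leq \lambda R_2$, so $q \in \bop(\lambda R_2)$. Lemma~\ref{lem:open_loop_bound} then yields a unique solution $x_q$ of \eqref{eq:x_q_ODE} on $[0,\widetilde\tau_{\min}]$ with $x_q(\tau)\in\bop(R_2)$ throughout. Since $h \leq \widetilde\tau_{\min}$ by hypothesis, restricting to $[0,h]$ proves (i).

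\textbf{Step 2: bounding $\|q\|_{\op}$ for (ii).} With $q = Q_\mu(x)$ and $x \in \bcl(M\mu)$, the quantizer property \eqref{eq:Qmu_cond} gives $\|q - x\|_{\op} \leq \Delta \mu$, and the norm equivalence gives $\|x\|_{\op} \leq \Gamma \|x\|_{\cl} < \Gamma M \mu$. The triangle inequality then yields $\|q\|_{\op} < (\Delta + \Gamma M)\mu$. Applying Lemma~\ref{lem:is_diff} to the solution from Step 1 gives $\|x_q(\tau) - q\|_{\op} \leq \nu(\tau)\|q\|_{\op}$ for all $\tau \in [0,h]$.

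\textbf{Step 3: assembling the bound.} Substituting into \eqref{eq:psi_dyn} and collecting terms,
\[
\psi_{\text{zo}}(\tau,q,\mu) \;\leq\; \Delta \mu\, e^{d_1\tau} + \nu(\tau)\|q\|_{\op} \;<\; \mu M\!\left[\frac{\Delta}{M}\bigl(e^{d_1\tau} + \nu(\tau)\bigr) + \Gamma \nu(\tau)\right].
\]
Since $\tau \mapsto e^{d_1\tau}$ and $\tau \mapsto \nu(\tau)$ are both nondecreasing on $[0,h]$ (with $\nu(h)>0$ because $d_2>0$), the bracketed expression is maximized at $\tau = h$, and the hypothesis \eqref{eq:sigma_lower_bound} bounds it above by $\sigma$, giving $\psi_{\text{zo}}(\tau,q,\mu) < \sigma M \mu$ for all $\tau \in [0,h]$.

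\textbf{Main obstacle.} The calculation itself is routine; the only subtle point is preserving strict inequality at the endpoint $\tau = 0$, which reduces to verifying $\Delta/M < \sigma$. This is not immediate from the hypothesis as stated (which is a non-strict inequality), but it follows because $\nu(h) > 0$ forces $\Gamma\nu(h) > 0$, leaving strict room on the right-hand side. Handling the two branches of $\nu$ in \eqref{eq:nu_def} (the cases $d_1 \ne 0$ and $d_1 = 0$) uniformly is a minor but nontrivial bookkeeping issue that I would address by using $\nu$ as a black box through its monotonicity and positivity, rather than its explicit form.
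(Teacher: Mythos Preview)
Your proof is correct and follows essentially the same approach as the paper: statement~(i) via Lemma~\ref{lem:open_loop_bound}, statement~(ii) via Lemma~\ref{lem:is_diff} combined with the quantizer bound \eqref{eq:Qmu_cond} and the norm equivalence $\|x\|_{\op}\leq\Gamma\|x\|_{\cl}$. The only difference is the order in which you apply monotonicity and the strict bound on $\|q\|_{\op}$: the paper first replaces $\tau$ by $h$ to get $\psi_{\text{zo}}(\tau,q,\mu)\leq \Delta\mu e^{d_1h}+\nu(h)\|q\|_{\op}$, and only then substitutes the strict inequality $\|q\|_{\op}<(\Delta+\Gamma M)\mu$, so that the factor $\nu(h)>0$ automatically carries the strictness through for all $\tau\in[0,h]$ without any separate treatment of $\tau=0$.
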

\begin{proof}
	Statement (i) directly follows from 
	Lemma~\ref{lem:open_loop_bound} and the inequality $h \leq \widetilde \tau_{\min}$. 
	To show statement (ii), we apply Lemma~\ref{lem:is_diff}
	and then obtain
	\[
	\psi_{\text{zo}}(\tau, q,\mu) \leq 
	\Delta \mu e^{d_1 h} + \nu(h) \|q\|_{\op}
	\]
	for all $\tau \in [0, h]$.
	Moreover, if $q = Q_{\mu}(x)$ for some $x \in \bcl(M\mu)$, then
	the inequality \eqref{eq:Qmu_cond} yields
	\begin{align*}
		\|q\|_{\op} \leq \|q - x\|_{\op} + \|x\|_{\op}
		< \Delta \mu + \Gamma M \mu. 
	\end{align*}
	Therefore, 
	\begin{equation*}
		\psi_{\text{zo}}(\tau, q,\mu) <  \Big(\Delta \big(e^{d_1 h}+\nu(h)\big) 
		+ \Gamma M \nu(h)\Big) \mu
	\end{equation*}
	for all $\tau \in [0, h]$. Combining this with the inequality \eqref{eq:sigma_lower_bound},
	we see that statement (ii) holds.
\end{proof}

Next, we show
that the state $x$ 
has an upper bound that decreases on $[t_k,t_{k+1}]$ for  $k \in \mathbb{Z}_{\geq 0}$.
To this end, we use the condition $\sigma < c / \alpha$,
i.e., the second inequality in \eqref{eq:thres_cond_varying}.
\begin{lemma}
	\label{lem:conv_TV_kth}
	Suppose that Assumptions~\ref{assump:closed}--\ref{assump:control_part} and
	\ref{assump:initial} hold, and 
	define 
	$\widetilde \tau_{\min} \in \mathbb{R}_{>0}$
	by \eqref{eq:tilde_tau_def}, where $\lambda \in (0,1)$ is as in 
	\eqref{eq:lambda_def_TV}.
	Assume that the parameters $(M,\Delta) \in \mathbb{R}_{>0}^2$ of quantization and 
	the parameters $(h,\sigma) \in (0,\widetilde \tau_{\min}]
	\times (0,\sigma_0]$ of self-triggered sampling satisfy
	the condition
	\eqref{eq:thres_cond_varying}.
	Let $k \in \mathbb{Z}_{\geq 0}$ and $t_k \in \mathbb{R}_{\geq 0}$.
	If $x_k \in \bcl(M\mu_k)$ for some $\mu_k \in (0, \mu_0]$, then
	the ODE
	\begin{equation}
		\label{eq:closed_loop_kth}
		\left\{
		\begin{aligned}
			\dot x(t) &= f\big(x(t),g(q_k) \big),\quad x(t_k) = x_k\\
			q_k &= Q_{\mu_k}\big(x(t_k)\big)
		\end{aligned}
		\right.
	\end{equation}
	has a unique solution on $[t_k,t_{k+1}]$, 
	where $t_{k+1}$ is determined by the STM \eqref{eq:STM_tm}.
	Furthermore, the solution $x$ satisfies
	\begin{equation}
		\label{eq:state_decrease_TV_kth}
		\|x(t)\|_{\cl} <
		M\mu_k\left(
		e^{-c(t-t_k)}
		\left(
		1 - \frac{\alpha  \sigma}{c}
		\right) + 
		\frac{\alpha  \sigma}{c}
		\right) 
	\end{equation}
	for all $t \in [t_k,t_{k+1}]$. 
\end{lemma}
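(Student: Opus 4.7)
The proof closely mirrors that of Lemma~\ref{lem:first_conv}, with $M\mu_k$ playing the role of $\|x_0\|_{\cl}$ and with the $\mu_k$-dependent threshold $\sigma M\mu_k$ replacing $\sigma\|q_0\|_{\cl}$. First I would invoke Lemma~\ref{lem:lambda_choice} (applicable because $\mu_k \leq \mu_0$ and $x_k \in \bcl(M\mu_k)$) to conclude $q_k \in \bcl(\lambda R)$. Lemma~\ref{lem:open_loop_bound} with $q = q_k$ then yields a unique solution $x_{q_k}$ of \eqref{eq:x_q_ODE} on $[0,\widetilde{\tau}_{\min}]$ with $x_{q_k}(\tau) \in \bop(R_2)$ throughout. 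Since $h \leq \widetilde{\tau}_{\min}$, statement (ii) of Lemma~\ref{lem:0h_bound} shows that the triggering condition in \eqref{eq:tv_SMT2} is strictly satisfied on $[0,h]$ and the condition in \eqref{eq:tv_SMT3} is also met there, so the STM \eqref{eq:STM_tm} produces a well-defined $\ell_k \geq 1$ with $t_{k+1} = t_k + \ell_k h > t_k$. By construction, $\psi_{\text{zo}}(\tau, q_k, \mu_k) \leq \sigma M \mu_k$ and $x_{q_k}(\tau) \in \bop(R_2)$ for all $\tau \in [0,\ell_k h)$.

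\paragraph{Contradiction argument.} I would then assume, for contradiction, that either the ODE \eqref{eq:closed_loop_kth} fails to have a unique solution on $[t_k, t_{k+1}]$, or a unique solution exists but violates \eqref{eq:state_decrease_TV_kth}. By continuity and the fact that the right-hand side of \eqref{eq:state_decrease_TV_kth} equals $M\mu_k > \|x_k\|_{\cl}$ at $t = t_k$, there is some $s_0 \in (t_k, t_{k+1}]$ on which a unique solution $x$ exists and at which the strict inequality first fails. Defining
\[
s_1 \coloneqq \inf\Bigl\{ t > t_k : \|x(t)\|_{\cl} \geq M\mu_k\Bigl( e^{-c(t-t_k)}\bigl(1 - \tfrac{\alpha\sigma}{c}\bigr) + \tfrac{\alpha\sigma}{c}\Bigr)\Bigr\} \in [t_k,s_0),
\]
the continuity of $x$ gives $\|x(s_1)\|_{\cl} < M\mu_k < R$, so on some right neighborhood $[t_k, s_1+\delta) \subset [t_k,s_0)$ we have $x(t) \in \bcl(R) \subseteq \bop(R_2)$.

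\paragraph{Main estimate.} Setting $e(t) \coloneqq q_k - x(t)$ on $[t_k, s_1+\delta)$, I would apply Lemma~\ref{lem:x_q_diff} with the time variable shifted by $t_k$ together with the STM bound to get $\|e(t)\|_{\op} \leq \psi_{\text{zo}}(t-t_k, q_k, \mu_k) \leq \sigma M\mu_k$. The key observation specific to the zooming case is that $M\mu_k \leq M\mu_0 < R$ by Assumption~\ref{assump:initial}, so together with $\sigma \leq \sigma_0$ this gives $e(t) \in \bop(\sigma_0 R)$. Theorem~\ref{thm:basic_prop_of_contraction3} (with time shifted so that $t_k$ corresponds to $0$) under Assumptions~\ref{assump:closed} and \ref{assump:control_part} then yields
\[
\|x(t)\|_{\cl} \leq e^{-c(t-t_k)} \|x_k\|_{\cl} + \frac{\alpha(1-e^{-c(t-t_k)})}{c}\,\sigma M\mu_k.
\]
Using $\|x_k\|_{\cl} < M\mu_k$ and rearranging, I arrive at the strict form of \eqref{eq:state_decrease_TV_kth} on $[t_k, s_1+\delta]$. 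Here the condition $\sigma < c/\alpha$ from \eqref{eq:thres_cond_varying} is exactly what makes the coefficient $1 - \alpha\sigma/c$ of $e^{-c(t-t_k)}$ positive, so the upper bound remains below $M\mu_k$ throughout the interval. This contradicts the definition of $s_1$.

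\paragraph{Main obstacle.} The argument is essentially a book-keeping exercise, and the real subtlety lies not in any single deep estimate but in carefully time-shifting between the STM clock (measuring $\tau$ from $t_k$) and Theorem~\ref{thm:basic_prop_of_contraction3} (which starts at $0$), while simultaneously verifying the three containments $x(t) \in \bcl(R)$, $x_{q_k}(t-t_k) \in \bop(R_2)$, and $e(t) \in \bop(\sigma_0 R)$ on the same interval. Of these, the containment $e(t) \in \bop(\sigma_0 R)$ is the one requiring an observation absent from the logarithmic case: it rests on the geometric constraint $M\mu_0 < R$ built into Assumption~\ref{assump:initial}, rather than on a quantizer-density inequality.
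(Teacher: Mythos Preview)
Your proposal is correct and follows essentially the same approach as the paper: place $q_k$ in $\bcl(\lambda R)$ via Lemma~\ref{lem:lambda_choice}, use Lemma~\ref{lem:0h_bound} and the STM definition to keep $x_{q_k}$ in $\bop(R_2)$ and to bound $\psi_{\text{zo}}$ by $\sigma M\mu_k$, then run a contradiction argument in which Lemma~\ref{lem:x_q_diff} controls $\|e(t)\|_{\op}$ and Theorem~\ref{thm:basic_prop_of_contraction3} yields the decay estimate. The only cosmetic difference is that the paper tracks the simpler time-constant invariant $\|x(t)\|_{\cl} < M\mu_k$ in the contradiction step (so no $\delta$-extension is needed, and the sharper bound \eqref{eq:state_decrease_TV_kth} is obtained afterward by replacing $s_1$ with $t_{k+1}$), whereas you track the full time-varying bound directly; your range $s_1\in[t_k,s_0)$ should read $(t_k,s_0]$, but this does not affect the argument.
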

\begin{proof}
	Let $x_k \in \bcl(M\mu_k)$ for some $\mu_k\in  (0, \mu_0]$.
	First, we prove that the ODE \eqref{eq:closed_loop_kth}
	has
	a unique solution $x$ on $[t_k,t_{k+1}]$
	and that $x$ satisfies
	\begin{equation}
		\label{eq:x_Mmu_k}
		\|x(t)\|_{\cl} < M\mu_k \quad \text{for all $t \in [t_k, t_{k+1})$}.
	\end{equation}
	Assume, to get a contradiction, that 
	\begin{itemize}
		\item
		the solution $x$
		of the ODE \eqref{eq:closed_loop_kth} 
		either does not exist or is not unique 
		on $[t_k,t_{k+1}]$; or that
		\item 
		the ODE \eqref{eq:closed_loop_kth} 
		has a unique solution $x$ on $[t_k,t_{k+1}]$, but
		$x$ does not satisfy the inequality \eqref{eq:x_Mmu_k}.
	\end{itemize}
	In both cases, there exists $s_0 \in (t_k,t_{k+1})$ such that 
	the ODE \eqref{eq:closed_loop_kth} has a unique 
	solution $x$ on $[t_k,s_0]$ satisfying
	$\|x(s_0)\|_{\cl} \geq  M\mu_k $.
	Define
	\[
	s_1 \coloneqq \inf\{
	t >t_k :
	\|x(t)\|_{\cl} \geq M\mu_k
	\} \in (t_k, s_0].
	\]
	The continuity of $x$ yields
	\begin{equation}
		\label{eq:xs1}
		\|x(s_1)\|_{\cl} = M\mu_k.
	\end{equation}
	
	From $x_k \in \bcl(M\mu_k)$ and 
	Lemma~\ref{lem:lambda_choice}, we have that
	$q_k \in \bcl(\lambda R)$ under
	Assumption~\ref{assump:initial}.
	Therefore, the unique solution $x_{q_k}(\tau)$ of the ODE \eqref{eq:x_q_ODE} with $q = q_k$ exists and satisfies
	\[
	x_{q_k}(\tau)  \in \bop(R_2)
	\] 
	for all $\tau \in [0, h]$ by statement (i) of Lemma~\ref{lem:0h_bound} and for all
	$\tau \in (h, t_{k+1} - t_k)$ by
	the definition \eqref{eq:tv_SMT3} of $\widetilde \ell_k$.
	
	Define 
	$e(t) \coloneqq q_k- x(t)$ for $t \in [t_k,  s_1)$. 
	Since 
	\[
	x(t),x_{q_k}(t-t_k)  \in \bop(R_2)
	\]
	for all $t \in [t_k, s_1 )$, we see  from 
	Lemma~\ref{lem:x_q_diff} that 
	\[
	\| e(t)  \|_{\op} \leq \psi_{\text{zo}}(t-t_k,q_k,\mu_k)
	\]
	for all $t \in [t_k, s_1 )$.
	Statement (ii) of Lemma~\ref{lem:0h_bound} shows that
	\begin{equation}
		\label{eq:psi_bound_k}
		\psi_{\text{zo}}(t-t_k,q_k,\mu_k) \leq \sigma M \mu_k
	\end{equation}
	for all $t \in [t_k, t_k+h]$. If $t_k+h< s_1 $, then this 
	inequality~\eqref{eq:psi_bound_k}
	holds also for all $t \in (t_k+h, s_1)$ by
	the definition \eqref{eq:tv_SMT2} of $\ell_k$.
	Hence
	\begin{equation}
		\label{eq:error_bound_zoom}
		\|e(t) \|_{\op} \leq \sigma M \mu_k 
	\end{equation}
	for all $t \in [t_k, s_1)$.
	In particular, using
	\[
	M\mu_k \leq M\mu_0 < R \quad  \text{and}
	\quad 
	\sigma \leq \sigma_0,
	\]
	we obtain $e(t) \in \bop(\sigma_0R)$ for all $t \in [t_k, s_1)$.
	
	Since $x$ is the solution of the ODE
	\[
	\dot x(t) = f\big(x(t),g(q_k)\big) = F\big(x(t),e(t)\big),\quad 
	x(t_k) = x_k
	\]
	on $[t_k, s_0]$,
	Theorem~\ref{thm:basic_prop_of_contraction3} 
	and the inequality~\eqref{eq:error_bound_zoom}
	show that 
	\begin{align*}
		\|x(t )\|_{\cl} 
		&\leq e^{-c(t-t_k) } \|x_k\|_{\cl}  +\alpha  \sigma M
		\mu_k \frac{1-e^{-c(t-t_k)}}{c} 
	\end{align*}
	holds
	for all $t \in [t_k, s_1]$ 	under Assumptions~\ref{assump:closed} and \ref{assump:control_part}. 
	From $\|x_k\|_{\cl} < M\mu_k$, we have that
	\begin{align}
		\|x(t )\|_{\cl}  < M\mu_k&\left(
		e^{-c (t-t_k)}
		\left(
		1 - \frac{\alpha  \sigma}{c}
		\right) + 
		\frac{\alpha  \sigma}{c}
		\right) \quad \text{for all $t \in [t_k, s_1]$}.
		\label{eq:x_tk_tau}
	\end{align}
	The condition \eqref{eq:thres_cond_varying}
	gives $\alpha \sigma /c <1$, and hence
	\[
	\|x(s_1)\|_{\cl} < M\mu_k,
	\]
	which contradicts \eqref{eq:xs1}.
	
	We have shown that the ODE \eqref{eq:closed_loop_kth}
	has a unique solution $x$
	on $[t_k,t_{k+1}]$ and that $x$ satisfies
	the inequality \eqref{eq:x_Mmu_k}.
	Replacing $s_1$ by $t_{k+1}$ in the above argument, we conclude from \eqref{eq:x_tk_tau} that
	the desired inequality \eqref{eq:state_decrease_TV_kth}
	is satisfied 
	for all $t \in [t_k, t_{k+1}]$.
\end{proof}

Now we are in the position to prove Theorem~\ref{thm:varying_case}.

\noindent\hspace{1em}{ {\textit{Proof of Theorem~\ref{thm:varying_case}.}} }
Let
$x(0) = x_0 \in \bcl(M\mu_0)$. By Lemma~\ref{lem:conv_TV_kth},
there exists a unique solution $x$ of the closed-loop system 
\eqref{eq:closed_loop} on $[0,t_1]$,
and the solution $x$ satisfies
\[
\|x(t)\|_{\cl} < M\mu_0 \left(
e^{-ct}
\left(
1 - \frac{\alpha  \sigma}{c}
\right) + 
\frac{\alpha  \sigma}{c}
\right) 
\]
for all $t \in [0,t_1]$.
In particular, we have $x(t_1) \in \bcl(M\mu_1)$. 
Repeating this argument shows that there exists a unique solution $x$ of the closed-loop system \eqref{eq:closed_loop} on $\mathbb{R}_{\geq 0}$.
Moreover, 
the inequality \eqref{eq:state_decrease_TV_kth} holds for all $t \in [t_k,t_{k+1}]$ and $k \in \mathbb{Z}_{\geq 0}$.
Since Lemma~\ref{lem:decay_bound} shows that 
the constant $\gamma$ defined by \eqref{eq:bound_decay_rate_zo}
satisfies
\[
e^{-c \tau }
\left(
1 - \frac{\alpha  \sigma}{c}
\right) + 
\frac{\alpha  \sigma}{c} \leq e^{-\gamma \tau}
\]
for all $\tau \in [0, \tau_{\max}]$,
we obtain
\[
\|x(t)\|_{\cl} <
M\mu_k  e^{-\gamma (t-t_k)}  \leq 
M\mu_0  e^{-\gamma t} 
\]
for all $t \in [t_k, t_{k+1}]$ and $k \in \mathbb{Z}_{\geq 0}$.
\hspace*{\fill} $\blacksquare$

\section{Lur'e system}
\label{sec:lure}
Consider the Lur'e system
\begin{equation}
	\label{eq:Lure}
	\dot x(t) = Ax(t) + Bu(t) + \xi \varphi\big(\eta^{\top }x(t)\big)
\end{equation}
for $t \in \mathbb{R}_{\geq 0}$ with the initial state
$x(0) = x_0 \in \mathbb{R}^n$,
where $A \in \mathbb{R}^{n \times n}$, $B \in \mathbb{R}^{n\times m}$, 
$\xi, \eta \in \mathbb{R}^n \setminus \{0 \}$, and 
$\varphi\colon \mathbb{R} \to \mathbb{R}$ is continuously differentiable
such that $\varphi(0) = 0$.
In the absence of  quantization and self-triggered sampling,
the control input $u$ is given by
\begin{equation}
	\label{eq:linear_feedback}
	u(t) = Kx(t)
\end{equation}
for $t \in \mathbb{R}_{\geq 0}$,
where $K \in \mathbb{R}^{m\times n}$.
To apply the proposed methods, here we show how to check whether
Assumptions~\ref{assump:closed}--\ref{assump:control_part}
are satisfied for the Lur'e system above.
In this and the next section,
we choose the norms $\|\cdot\|_{\cl}$ and $\|\cdot\|_{\op}$
from the class of diagonally-weighted $\infty$-norms.

Let $R_0 \in  \mathbb{R}_{>0} \cup \{\infty \}$, and set
\begin{equation}
	\label{eq:kappa_bound}
	\kappa_{\min} \coloneqq \inf_{-R_0 < z < R_0} \varphi'(z),\quad 
	\kappa_{\max} \coloneqq \sup_{-R_0 < z < R_0} \varphi'(z).
\end{equation}
Define an open convex set $C_{\eta}$ by
\[
C_{\eta} \coloneqq \{x \in \mathbb{R}^n : -R_0 < \eta^{\top} x < R_0 \}.
\]
For $x\in \mathbb{R}^n$, 
define the functions $F_0$ and $f_q$ by
\begin{align*}
	F_0(x) &\coloneqq (A+BK)x + \xi \varphi(\eta^{\top }x)
\end{align*}
and
\begin{align*}
	f_q(x) &\coloneqq Ax +  \xi \varphi(\eta^{\top }x) + BKq,
\end{align*}
where $q\in \mathbb{R}^n$. 

\subsubsection*{On Assumption~\ref{assump:closed}}
Using the technique developed in 
\cite[Theorem~29]{Davydov2025}, we can find
a constant $c \in \mathbb{R}_{>0}$ and a diagonally-weighted $\infty$-norm $\|\cdot\|_{\cl}$
satisfying
\begin{equation}
	\label{eq:contraction_cond}
	\sup_{x \in C_{\eta} }
	\mu_{\cl}\big(DF_0(x)\big) \leq -c.
\end{equation}
In fact,
the inequality \eqref{eq:contraction_cond} holds if and only if
\begin{subequations}
	\label{eq:LP_for_c}
	\begin{align}
		\lceil (A+BK)+ \kappa_{\min} \xi \eta^{\top}\rceil_{\mathrm{Mzr}} \,\theta_{\cl}^{-1} &\leq -c\, \theta_{\cl}^{-1} \quad \text{and} 
		\label{eq:LP_for_c1}\\
		\lceil (A+BK)+ \kappa_{\max} \xi \eta^{\top}\rceil_{\mathrm{Mzr}} \, \theta_{\cl}^{-1} &\leq -c \, \theta_{\cl}^{-1}
		\label{eq:LP_for_c2}
	\end{align}
\end{subequations}
are satisfied,
where $\theta_{\cl}\in \mathbb{R}^n_{>0}$ is the weighting
vector for the norm $\|\cdot\|_{\cl}= \|\cdot \|_{\infty, [\theta_{\cl}]}$.
For a fixed $c\in \mathbb{R}_{>0}$, the problem of 
finding $\theta_{\cl} \in \mathbb{R}^n_{>0}$ satisfying 
the above two inequalities \eqref{eq:LP_for_c1} and 
\eqref{eq:LP_for_c2} can be solved by linear programming.
Since
\begin{align}
	\label{eq:Kx_bound}
	|\eta^{\top} x| \leq 
	\|\eta\|_{1,[\theta]^{-1} }  \|x\|_{\infty,[\theta]} ,\quad x \in \mathbb{R}^n,~
	\theta \in \mathbb{R}^n_{>0},
\end{align}
the maximum constant $R_1$ satisfying
$\bcl(R_1) \subseteq C_{\eta}$ is given by
\begin{equation}
	\label{eq:R1_cond_Lure}
	R_1 = \frac{R_0}{\|\eta\|_{1, [\theta_{\cl}]^{-1} } }.
\end{equation}

\subsubsection*{On Assumption~\ref{assump:open}}
As above, we can find
a constant $d_1 \in \mathbb{R}_{\geq 0}$ 
and a diagonally-weighted $\infty$-norm $\|\cdot\|_{\op}$
satisfying
\[
\sup_{x \in C_{\eta} }
\mu_{\op}\big(Df_q(x)\big) 
\leq d_1
\]
for all $q \in \mathbb{R}^n$, by solving the inequities 
\begin{subequations}
	\label{eq:LP_for_d}
	\begin{align}
		\lceil A+ \kappa_{\min} \xi \eta^{\top}\rceil_{\mathrm{Mzr}} \,\theta_{\op}^{-1} &\leq d_1  \theta_{\op}^{-1}  \quad  \text{and} 
		\label{eq:LP_for_d1} \\
		\lceil A+ \kappa_{\max} \xi \eta^{\top}\rceil_{\mathrm{Mzr}} \,\theta_{\op}^{-1} &\leq d_1 \theta_{\op}^{-1},
		\label{eq:LP_for_d2} 
	\end{align}
\end{subequations}
where $\theta_{\op}\in \mathbb{R}^n_{>0}$ is the weighting
vector for the norm $\|\cdot\|_{\op}= \|\cdot \|_{\infty, [\theta_{\op}]}$.
Moreover,
we have from the inequality \eqref{eq:Kx_bound} that 
\begin{align*}
	\|F_0(x) \|_{\op}
	&\leq \|A+BK\|_{\op} \|x\|_{\op}+ \kappa \|\xi\|_{\op}\|\eta\|_{1,[\theta_{\op}]^{-1} } \|x\|_{\op}
\end{align*}
for all $x \in C_{\eta}$, 
where 
$
\kappa \coloneqq \max\{|\kappa_{\min}|,\, |\kappa_{\max}|  \}.
$
Hence, if 
\begin{equation}
	\label{eq:Lure_d2}
	d_2 = \|A+BK\|_{\op} + \kappa \|\xi\|_{\op}\|\eta\|_{1,[\theta_{\op}]^{-1} },
\end{equation}
then
$\|F_0(x) \|_{\op} \leq d_2 \|x\|_{\op}$ for all $x \in C_{\eta}$.
As in \eqref{eq:R1_cond_Lure} for $R_1$,
the maximum constant $R_2 $ satisfying
$\bop(R_2) \subseteq C_{\eta}$ is given by
\begin{equation}
	\label{eq:R2_cond_Lure}
	R_2 = \frac{R_0}{\|\eta\|_{1,[\theta_{\op}]^{-1} }}.
\end{equation}

\subsubsection*{On Assumption~\ref{assump:control_part}}
To obtain $R \coloneqq \min \{R_1,\, R_2 / \Gamma \}$, 
we need a constant $\Gamma \in \mathbb{R}_{>0}$ 
satisfying  $\|x\|_{\op} \leq \Gamma\|x\|_{\cl}$.
The minimum constant $\Gamma$ satisfying this inequality
is given by
\begin{equation}
	\label{eq:mim_Gam}
	\Gamma =\max_{i=1,\dots,n} \frac{\theta_{\op,i}}{\theta_{\cl,i}}.
\end{equation}
The inequality \eqref{eq:F_e_Lip} is equivalent to
\[
\|BKe\|_{\cl} \leq \alpha \|e\|_{\op},\quad e \in \mathbb{R}^n.
\]
Then $\sigma_0 \in \mathbb{R}_{>0}$ can be chosen arbitrarily, and
the minimum constant $\alpha$ satisfying
the above inequality is given by
\begin{equation}
	\label{eq:alpha_Lure}
	\alpha = \big\| [\theta_{\cl}] BK [\theta_{\op}]^{-1} \big\|_{\infty}.
\end{equation}

\section{Numerical simulations of a two-tank system}
\label{sec:simulation}
In this section, we apply the proposed self-triggered control schemes to
a two-tank system.
First, we find constants satisfying
Assumption~\ref{assump:closed}--\ref{assump:control_part} by
using the results in Section~\ref{sec:lure}.
Next, we present simulation results of the STMs~\eqref{eq:STM}
and \eqref{eq:STM_tm}.
Finally, we 
make a comparison between 
the logarithmic and zooming quantization cases
with respect to 
the relative error of the state from the ideal closed-loop
system.

\subsection{Dynamics of a two-tank system}
Let a function $ \Phi\colon \mathbb{R} \to \mathbb{R}$ be given, and
consider the following system
\begin{equation}
	\label{eq:tank_system}
	\left\{
	\begin{aligned}
		\dot x_1(t) &= \Phi\big(x_2(t)- x_1(t)\big) + u(t) \\
		\dot x_2(t) &= -\Phi\big(x_2(t)- x_1(t)\big)
	\end{aligned}
	\right.
\end{equation}
for $t \in \mathbb{R}_{\geq 0}$ with the initial state
\[
\begin{bmatrix}
	x_1(0) \\ x_2(0)
\end{bmatrix}
=
\begin{bmatrix}
	x_{1,0}  \\ x_{2,0} 
\end{bmatrix} \in \mathbb{R}^2
\]
This equation represents a 
system of two tanks with equal cross-sectional areas, connected via a pipe. For $i=1,2$,
let $H_i\in \mathbb{R}_{\geq 0}$ be the nominal liquid level
of tank~$i$.
The $i$-th element $x_i$ of the state  is the deviation from the nominal level in tank~$i$.
The input $u$ is the control flow to tank~$1$.

Assume that $H \coloneqq H_2 - H_1 >0$, and define 
\[
\Phi(z) \coloneqq a \sqrt{H+z} - a\sqrt{H},\quad z \geq -H, 
\]
where $a\in\mathbb{R}_{>0}$ is a flow constant of the pipe.
The term $a \sqrt{H+z}$ represents the flow from tank~$2$ 
to tank~$1$ when $z = x_2 - x_1$.
Here we assume that, in addition to  the control flow $u$, 
the constant flow $a\sqrt{H}$ exits tank~$1$ and 
enters tank~$2$ to maintain 
the nominal liquid level.
We set $a \coloneqq 2$ and  $H \coloneqq 1$ in this study.
Fig.~\ref{fig:two_tank} illustrates the two-tank system.

\begin{figure}[tb]
	\centering
	\includegraphics[width = 5cm]{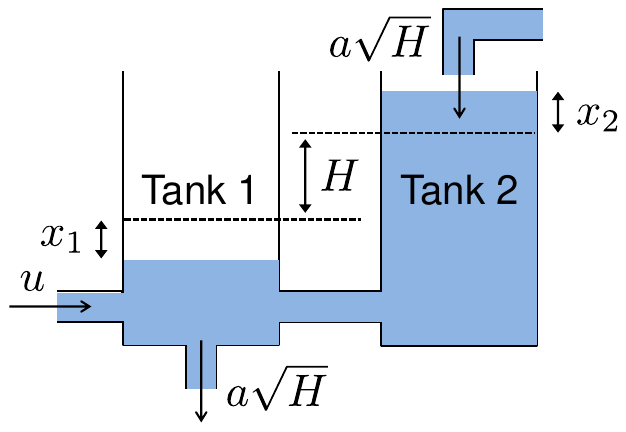}
	\caption{Two-tank system.}
	\label{fig:two_tank}
\end{figure}

Define $\varphi (z) \coloneqq  \Phi(z) - z$.
Then, the ODEs \eqref{eq:tank_system} can be rewritten  as
the Lur'e system \eqref{eq:Lure} with 
\[
A \coloneqq \begin{bmatrix}
	-1 & 1 \\
	1 & -1
\end{bmatrix},~~
B \coloneqq \begin{bmatrix}
	1 \\ 0
\end{bmatrix},~~
\xi \coloneqq 
\begin{bmatrix}
	1 \\ -1
\end{bmatrix},~~
\eta \coloneqq
\begin{bmatrix}
	-1 \\ 1
\end{bmatrix},
\]
where the eigenvalues of $A$ are $0$ and $-2$.
The feedback gain $K$ of the controller \eqref{eq:linear_feedback} is given by
\[
K \coloneqq
\begin{bmatrix}
	-0.7979 &   -0.6163
\end{bmatrix},
\]
which is the gain of the linear quadratic regulator with 
state weight $I$ and input weight $1$ for the linear part $(A,B)$.

By transforming the ODEs \eqref{eq:tank_system} into the
Lur'e system \eqref{eq:Lure}, we can check whether Assumptions~\ref{assump:closed}--\ref{assump:control_part}
are satisfied for the two-tank system,
as explained in Section~\ref{sec:lure}.
For simulations, we set 
$R_0 \coloneqq 0.45$.
Then 
the constants $\kappa_{\min}$ and $\kappa_{\max}$ in \eqref{eq:kappa_bound} 
are given by
$\kappa_{\min} = -0.17$ and $\kappa_{\max} = 0.35$.
Solving the inequalities \eqref{eq:LP_for_c1} and 
\eqref{eq:LP_for_c2} with $c = 0.4$,
we obtain the vector $\theta_{\cl} = [1 ~~0.5180]^{\top}$.
Similarly, we see that the vector $\theta_{\op} = [1 ~~1]^{\top}$
satisfies the inequalities  \eqref{eq:LP_for_d1} and \eqref{eq:LP_for_d2}  with $d_1 = 0$.
We set $\|\cdot\|_{\cl} \coloneqq \|\cdot\|_{\infty,[\theta_{\cl}]}$ and
$\|\cdot\|_{\op} \coloneqq \|\cdot\|_{\infty,[\theta_{\op}]}$.
Moreover, 
we have $R_1 = 0.1536$ by
\eqref{eq:R1_cond_Lure} and
$R_2 = 0.225$ by
\eqref{eq:R2_cond_Lure}. 
Using \eqref{eq:Lure_d2}, we calculate 
the constant $d_2$ to be $2.8817$.
From \eqref{eq:mim_Gam}, we obtain $\Gamma = 1.9305$. Then  
\[
R \coloneqq \min\left\{R_1,\,
\frac{R_2}{\Gamma} \right\} =  0.1166.
\] 
We have $\alpha = 1.4142$ by \eqref{eq:alpha_Lure}.
Note that the constant $\sigma_0 \in \mathbb{R}_{>0}$ 
in Assumption~\ref{assump:control_part} can be chosen arbitrarily.
We summarize the parameters for 
Assumptions~\ref{assump:closed}--\ref{assump:control_part}
in Table~\ref{tab:assumption_parameters}.
\begin{remark}
	If we set $\|\cdot\|_{\op} \coloneqq \|\cdot\|_{\cl}$,
	then $d_1 \geq 1.2562$, which leads to frequent sampling; see the definitions
	of $\psi_{\log}$ and $\psi_{\text{zo}}$ given in \eqref{eq:psi_log} and \eqref{eq:psi_dyn}, respectively.
	For this reason, we use different norms in Assumptions~\ref{assump:closed} and
	\ref{assump:open}.
	\hspace*{\fill} $\triangle$ 
\end{remark}

\begin{table}[tb]
	\centering
	\caption{Parameters for Assumptions~\ref{assump:closed}--\ref{assump:control_part}.}
	\label{tab:assumption_parameters}
	\begin{tabular}{c|cc||c|c} \hline 
		\textbf{Parameter} & \textbf{Value} && \textbf{Parameter} & \textbf{Value} \\ \hline
		$c$ & $0.4$ && $(d_1,d_2)$ & $(0,2.8817)$ 	 \\
		$\theta_{\cl}$ & $[1 ~~0.5180]^{\top}$  && $\theta_{\op}$ & $[1 ~~1]^{\top}$  \\
		$R_1$ & $0.1536$ && $R_2$ & $0.225$ 	 \\
		$\Gamma$ & $1.9305$ &&	$R$ & $0.1166$    \\
		$\alpha$ & $1.4142$ &&  $\sigma_0$ & arbitrary	\\
		\hline	
	\end{tabular}
\end{table}

\subsection{Simulation results of self-triggered control under logarithmic quantization}
First, we consider the STM \eqref{eq:STM} for logarithmic quantization.
The constants $L_{\cl}$ and $L_{\op}$ in \eqref{eq:Lcl_Lop_def} 
are given by $L_{\cl} = L_{\op} = 1$.
Fig.~\ref{fig:sigma_cond} shows 
the range of the threshold parameter $\sigma$ such that 
the condition~\eqref{eq:thres_cond} is satisfied for a given
quantization density $\rho$.
The blue and the red lines indicate 
the lower bound $\Gamma  L_{\op}(1-\rho)/(1+\rho)$ and 
the upper bound $2c\rho / (\alpha L_{\cl}(1+\rho))$, respectively.
When the pair $(\rho,\sigma)$  belongs to the gray region, stabilization 
can be achieved by the STM \eqref{eq:STM}.
The upper and lower bounds intersect and take the value $0.2467$ at 
\[
\rho = \rho_{\min} \coloneqq 0.7734.
\]
Therefore, when $\rho > \rho_{\min}$,
the state trajectory starting in $\bcl(R/L_{\cl})$ exponentially converges to the origin
without Zeno behavior
for a suitable $\sigma$ by Theorem~\ref{thm:log_case}. On the other hand,
the upper bound $2c\rho / (\alpha L_{\cl}(1+\rho))$ is  $0.2828$
at $\rho = 1$. Hence,
the state convergence is not guaranteed for any $\sigma \geq 0.2828$ under the STM \eqref{eq:STM}.

\begin{figure}[t]
	\centering
	\includegraphics[width = 9cm]{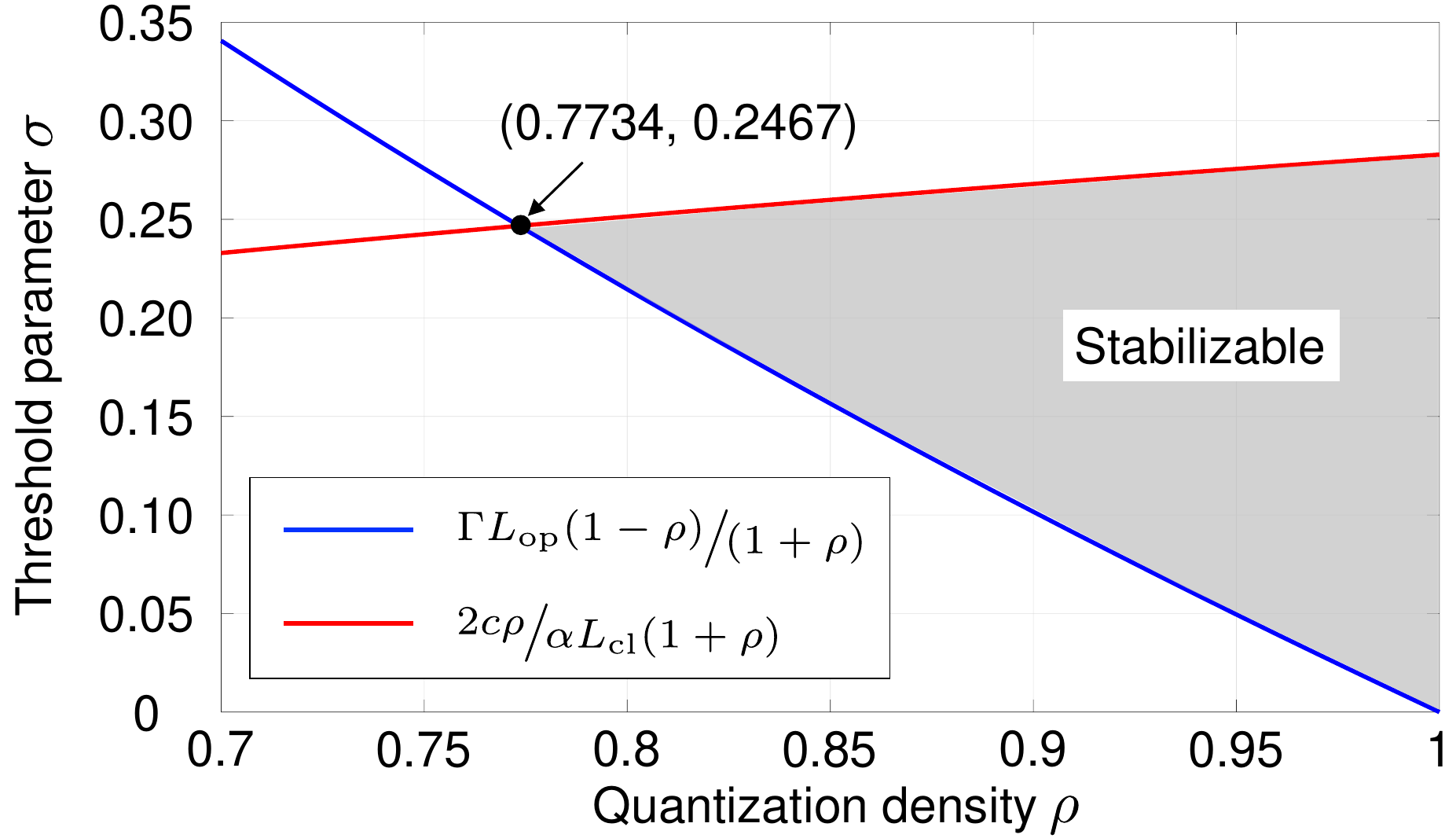}
	\caption{Stabilizable region of $(\rho, \sigma)$.}
	\label{fig:sigma_cond}
\end{figure}

For simulations of time-responses,
the parameters of logarithmic quantization are given by
$\chi_{0} = R = 0.1166$ and $\rho = 0.85$. 
Then Assumption~\ref{assump:initial_cond}
is satisfied.
The condition \eqref{eq:thres_cond} on 
the threshold parameter $\sigma$ 
is written as 
\[
0.1565 < \sigma < 0.2599,
\]
and  we set
$\sigma = 0.25$. 
The upper bound $\tau_{\max} \in \mathbb{R}_{>0}$ 
of the inter-sampling times is arbitrary, and 
we set $\tau_{\max} = 0.18$.
We see from \eqref{eq:min_inter_event_time} that 
the inter-sampling times of the STM
\eqref{eq:STM} are bounded from below by 
\[
\min\{ \tau_{\min},\, \widetilde \tau_{\min} \} = 
\min\{
0.0168,\, 0.0281
\} = 
0.0168,
\]
where 
$\widetilde \tau_{\min} = 0.0281$ is obtained by using
the constant $\lambda = 0.9251$ in Lemma~\ref{lem:Qlog_prop}.
Table~\ref{tab:log_parameters} summarizes the parameters used for the simulations
in the logarithmic quantization case.

\begin{table}[t]
	\centering
	\caption{Parameter settings for logarithmic quantization case.}
	\label{tab:log_parameters}
	\begin{tabular}{c|cc||c|c} \hline
		\textbf{Parameter} & \textbf{Value} && \textbf{Parameter} & \textbf{Value} \\ \hline 
		$\chi_0$ & $0.1166$ && 	$\rho$ & $0.85$  \\
		$\sigma$ & $0.25$  && 	$\tau_{\max}$ & $0.18$\\
		\hline
	\end{tabular}
\end{table}

Figs.~\ref{fig:state_log}--\ref{fig:ie_log} show the time-responses with 
the initial state 
$[x_{1,0}~~x_{2,0}]^{\top} = [0.1~~{-0.2}]^{\top}$, 
where the time-step of the simulation
is given by $10^{-5}$.
We illustrate the state $x$ in Fig.~\ref{fig:state_log}, the input $u$ in Fig.~\ref{fig:input_log}, and
the inter-sampling time $t_{k+1}-t_k$ in Fig.~\ref{fig:ie_log}. 
The total number of sampling instants on the interval $(0,6]$ is $98$.
In Fig.~\ref{fig:state_log},
the solid blue and red lines represent the first and second elements $x_1$ and $x_2$ of the  state of the
quantized self-triggered control system, respectively.
The dashed brawn and green lines are the first and second 
elements $x_{\text{ideal},1}$ and $x_{\text{ideal},2}$ of the state 
of the ideal closed-loop system, respectively, where the input $u_{\text{ideal}}$ is given by $u_{\text{ideal}}(t) = K[x_{\text{ideal},1}(t)~~x_{\text{ideal},2}(t)]^{\top}$ for all $t \in 
\mathbb{R}_{\geq 0}$.
In Fig.~\ref{fig:input_log}, the solid blue and dashed red lines correspond to
the input $u$ of the quantized self-triggered control system and the 
input $u_{\text{ideal}}$ of the ideal closed-loop system, respectively.

From Fig.~\ref{fig:state_log}, we see that the state trajectory of the quantized self-triggered 
control system looks identical to that of the ideal closed-loop system.
However, Fig.~\ref{fig:input_log} shows that the input oscillates 
on the interval $[0,1]$ due to quantization.
As shown in Fig.~\ref{fig:ie_log},
the inter-sampling time is smaller on the same interval, because
the state changes rapidly.
We also observe that the inter-sampling time remains almost constant
as the state converges.

\begin{figure}[t]
	\centering
	\includegraphics[width = 9cm]{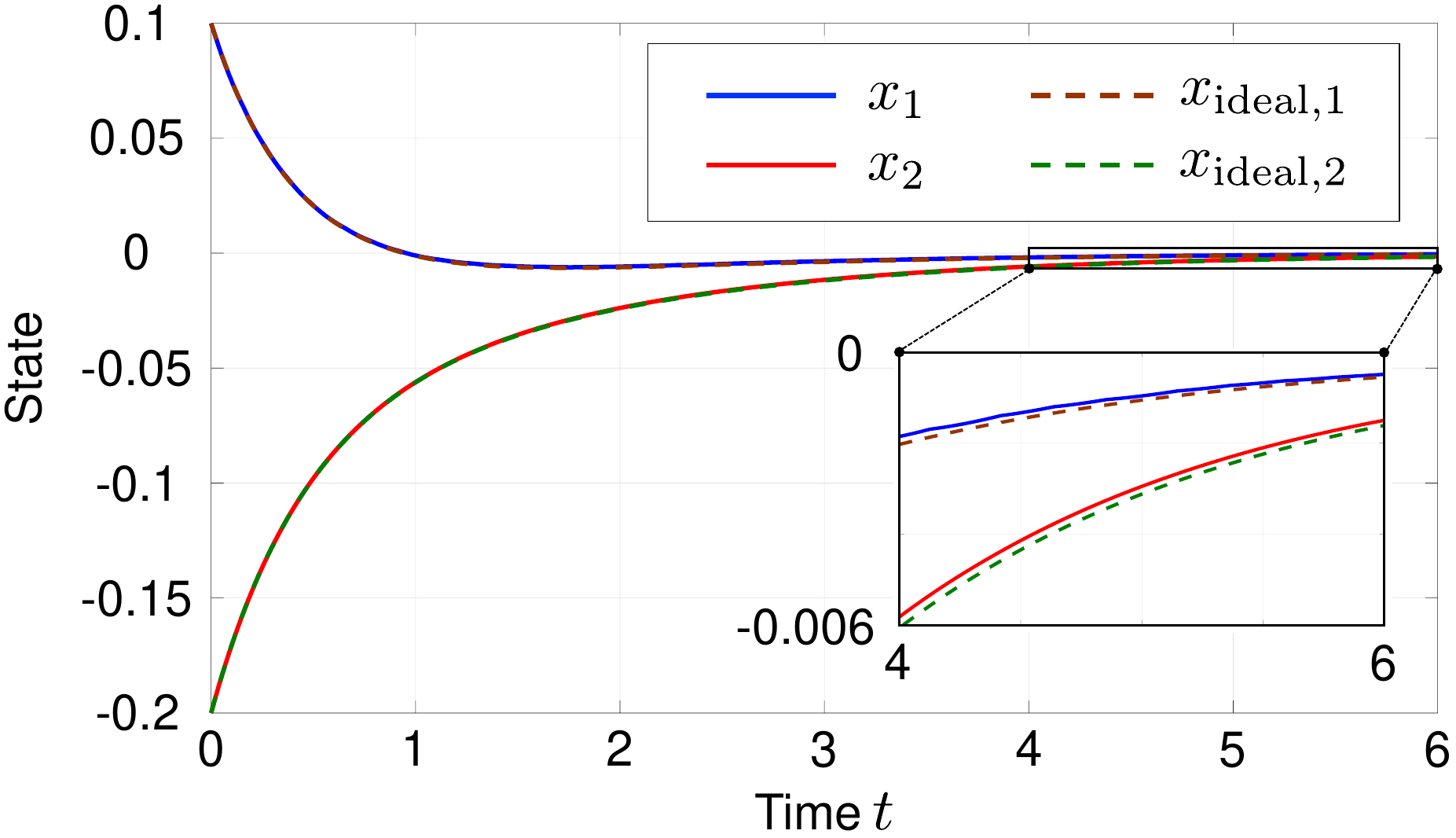}
	\caption{State under logarithmic quantization.}
	\label{fig:state_log}
\end{figure}

\begin{figure}[t]
	\centering
	\includegraphics[width = 9cm]{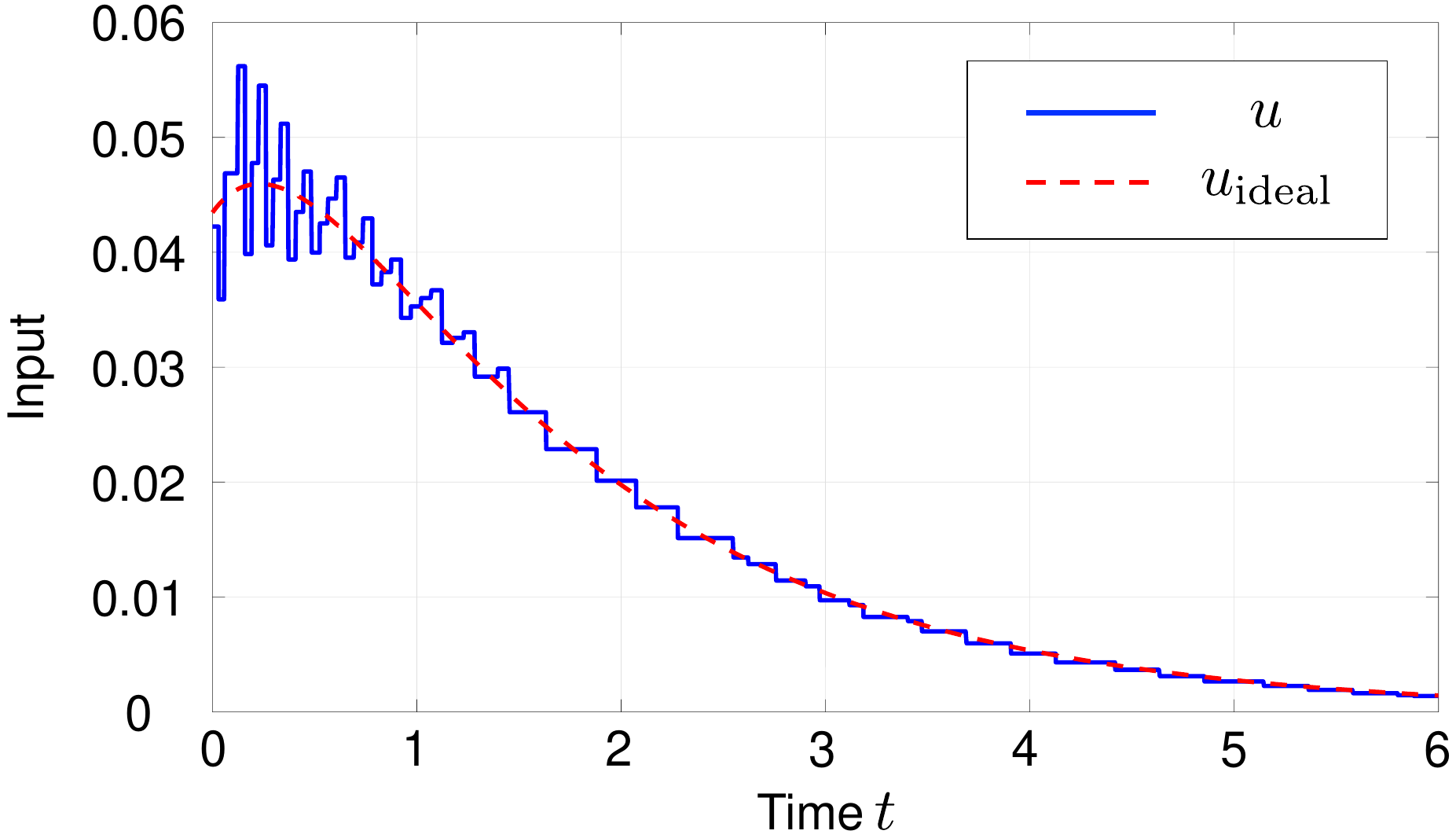}
	\caption{Input under logarithmic quantization.}
	\label{fig:input_log}
\end{figure}

\begin{figure}[t]
	\centering
	\includegraphics[width = 9cm]{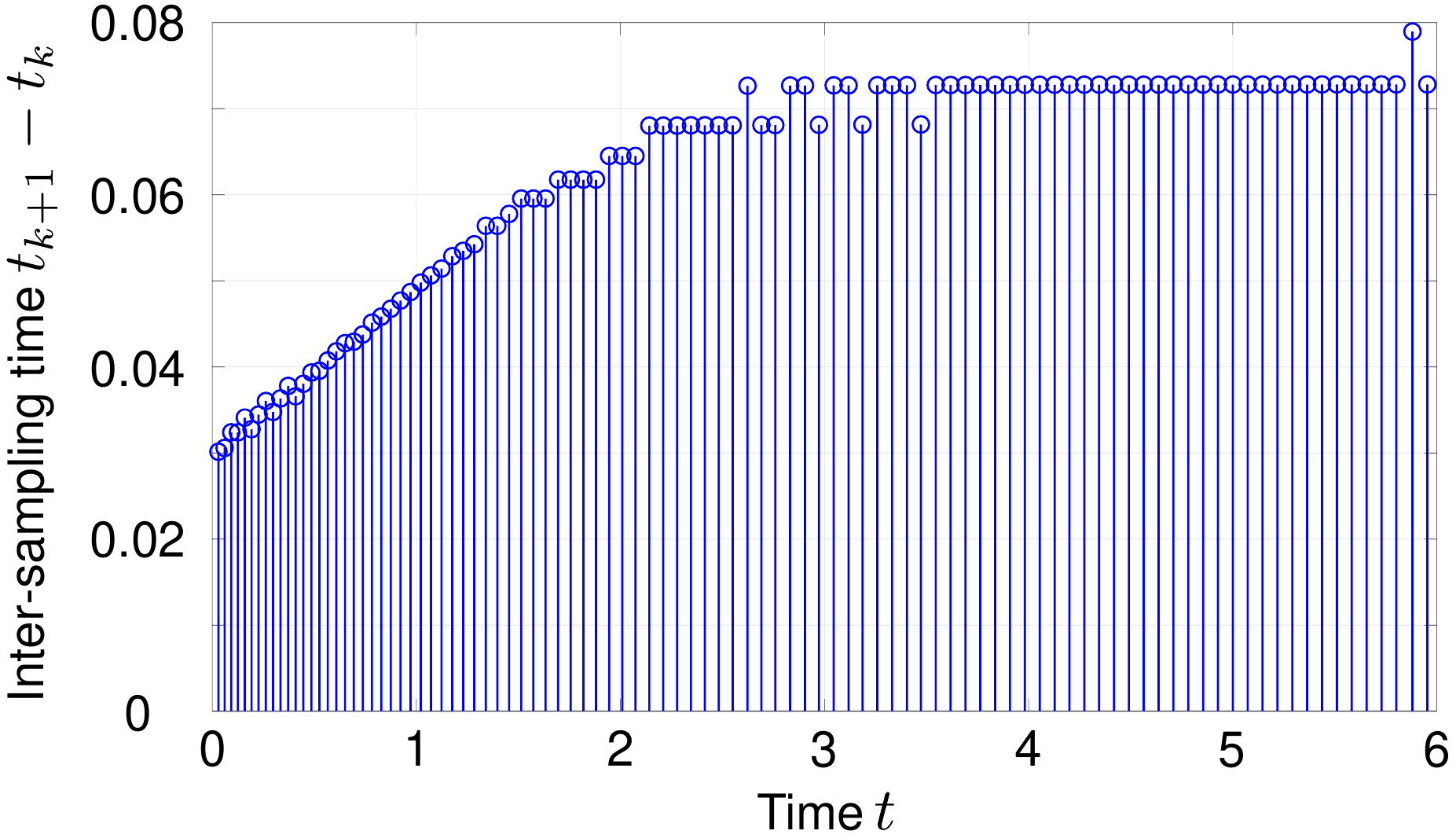}
	\caption{Inter-sampling times under logarithmic quantization.}
	\label{fig:ie_log}
\end{figure}

\subsection{Simulation results of self-triggered control under zooming quantization}
Next, we give simulation results in the case of zooming quantization.
For the $i$-th element $x_i$ of the state, 
the function $Q$ performs uniform quantization with a  
step size of $2\Delta / \theta_{\op,i}$,
where $\theta_{\op,i}$ is the $i$-th element of 
the weighting vector $\theta_{\op}$. In other words,
the quantized value of $x_i$ is given by the nearest value in the set 
$\{ 2p\Delta / \theta_{\op,i}: p \in \mathbb{Z}\}$.
The parameters of the zooming quantizer are given by
$M = 0.105$, $\Delta = 0.005$, and $\mu_0 = 1$.
Then Assumption~\ref{assump:initial}
is satisfied.
The period $h$ 
for the STM
\eqref{eq:STM_tm} has to satisfy
\[
0 < h \leq \widetilde \tau_{\min} = 0.0057,
\]
where $\lambda = 0.9837$ obtained from \eqref{eq:lambda_def_TV} is used
for the computation of $\widetilde \tau_{\min}$.
We set $h = 0.001$ for simulations.  The upper bound $\ell_{\max} \in \mathbb{N}$ 
of $\ell_k$ is given by $\ell_{\max} = \tau_{\max} /h = 
180$.
The condition \eqref{eq:thres_cond_varying} 
with this period $h$ is 
written as 
\[
0.0533 \leq \sigma < 0.2828,
\]
and 
we set $\sigma = 0.075$. 
The parameters used for the simulations
in the zooming quantization case are summarized in
Table~\ref{tab:zoom_parameters}.

Note that the comparison between the threshold parameters $\sigma$ of 
the STMs \eqref{eq:STM} and
\eqref{eq:STM_tm} does not make sense. In fact,
$\sigma$ is the coefficient of 
the quantized value $q_k$ in the logarithmic quantization case, while $\sigma$ is the coefficient 
of
the quantization range $M \mu_k$ in the zooming quantization case.
Moreover, a smaller $\sigma$  leads to
fast decay of the zoom parameter $\mu_k$ and hence quantization errors.
Therefore,
we choose the small threshold in the zooming  quantization case.

\begin{table}[t]
	\centering
	\caption{Parameter settings for zooming quantization case.}
	\label{tab:zoom_parameters}
	\begin{tabular}{c|cc||c|c} \hline
		\textbf{Parameter} & \textbf{Value} && \textbf{Parameter} & \textbf{Value} \\ \hline 
		$M$ & $0.105$ && 	$\Delta$ & $0.005$  \\
		$\mu_0$ & $1$  && 	$h$  &  $0.001$ \\
		$\ell_{\max}$ & $180$  && 	$\sigma$ & $0.075$  \\
		\hline	
	\end{tabular}
\end{table}

The time-responses with the initial state $[x_{1,0}~~x_{2,0}]^{\top} = [0.1~~{-0.2}]^{\top}$ are shown in Figs.~\ref{fig:state_zo}--\ref{fig:ie_zo}, where the time-step of the simulation is set to $10^{-5}$.
We illustrate the state $x$ in Fig.~\ref{fig:state_zo} and the input $u$ in 
Fig.~\ref{fig:input_zo}, where each line represents the same as in 
Figs.~\ref{fig:state_log} and \ref{fig:input_log}.  Fig.~\ref{fig:ie_zo}
shows
the inter-sampling time $t_{k+1}-t_k$. 
The total number of sampling instants 
on the interval $(0,6]$ is $100$, which is closely aligns with 
the number of sampling instants, $98$, in 
the logarithmic quantization case.
From Figs.~\ref{fig:state_zo}--\ref{fig:ie_zo}, we observe that the responses in
the zooming  quantization case have properties similar to those in
the logarithmic quantization case.
In fact, the state trajectory by quantized self-triggered control
is quite close to that by the ideal control, and
the input by quantized self-triggered  control oscillates in  the early response phase.

Compared with the case of logarithmic quantization,
the change of the input $u$ in Fig.~\ref{fig:input_zo} is still large on the interval $[4,6]$ due to 
coarse quantization.
From Fig.~\ref{fig:state_zo}, we observe that 
this leads to the oscillation of the state $x_1$ on the interval $[4,6]$.
In Fig.~\ref{fig:ie_zo}, the
inter-sampling time is smaller on the interval $[0,1]$ but larger on the interval
$[2,6]$ than that in the logarithmic quantization case.
This is because the STM~\eqref{eq:STM_tm} for zooming quantization uses
the quantization range
$M\mu_k$ for the threshold.
To avoid quantizer saturation, the update rule for the zoom parameter $\mu_k$
is designed to be conservative. In other words,
$\mu_k$ decreases more slowly than $x$.
This makes the inter-sampling time on the interval $[2,6]$ larger  than
in the case of logarithmic quantization.

\begin{figure}[t]
	\centering
	\includegraphics[width = 9cm]{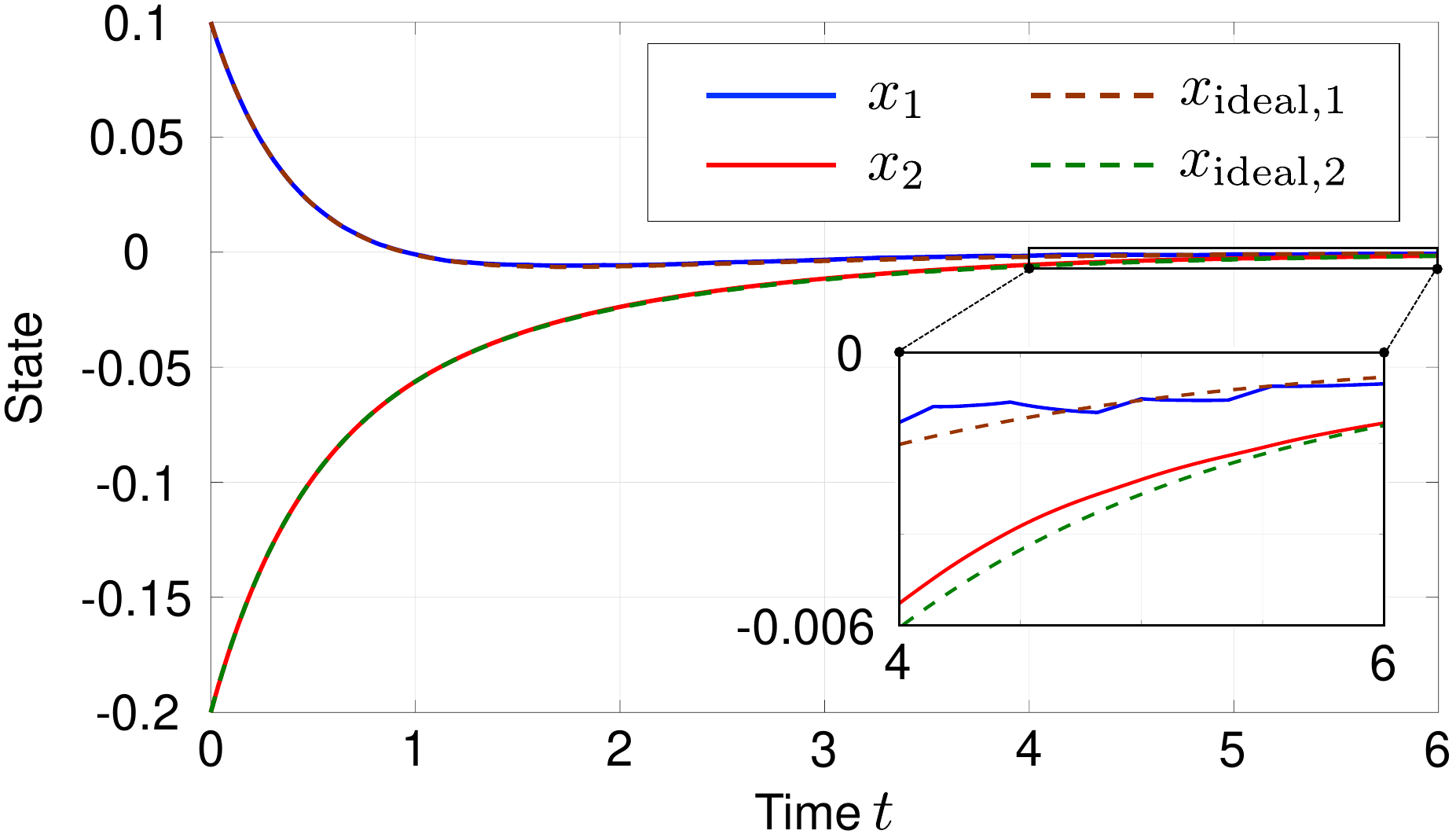}
	\caption{State under zooming quantization.}
	\label{fig:state_zo}
\end{figure}

\begin{figure}[t]
	\centering
	\includegraphics[width = 9cm]{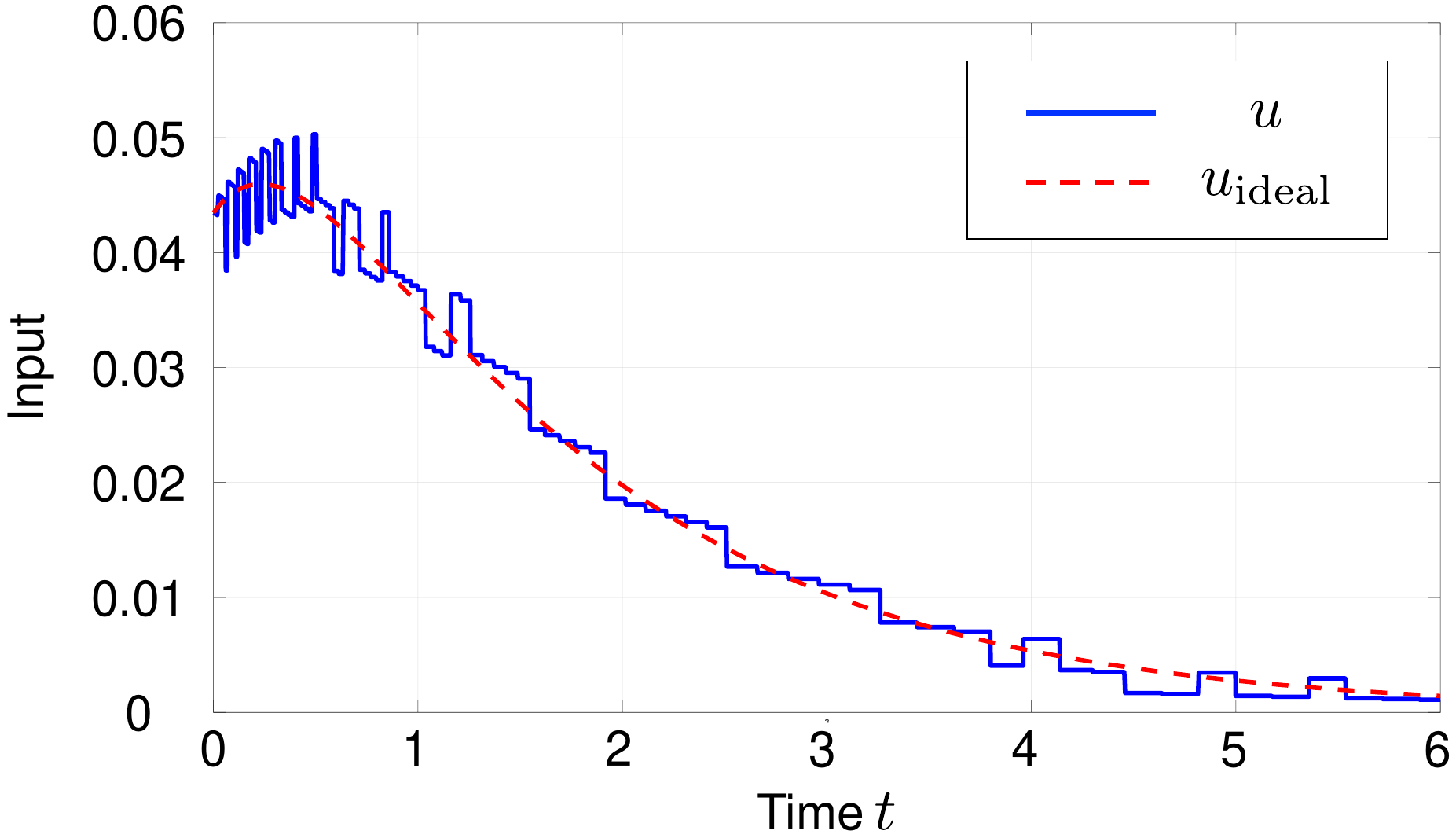}
	\caption{Input under zooming quantization.}
	\label{fig:input_zo}
\end{figure}

\begin{figure}[t]
	\centering
	\includegraphics[width = 9cm]{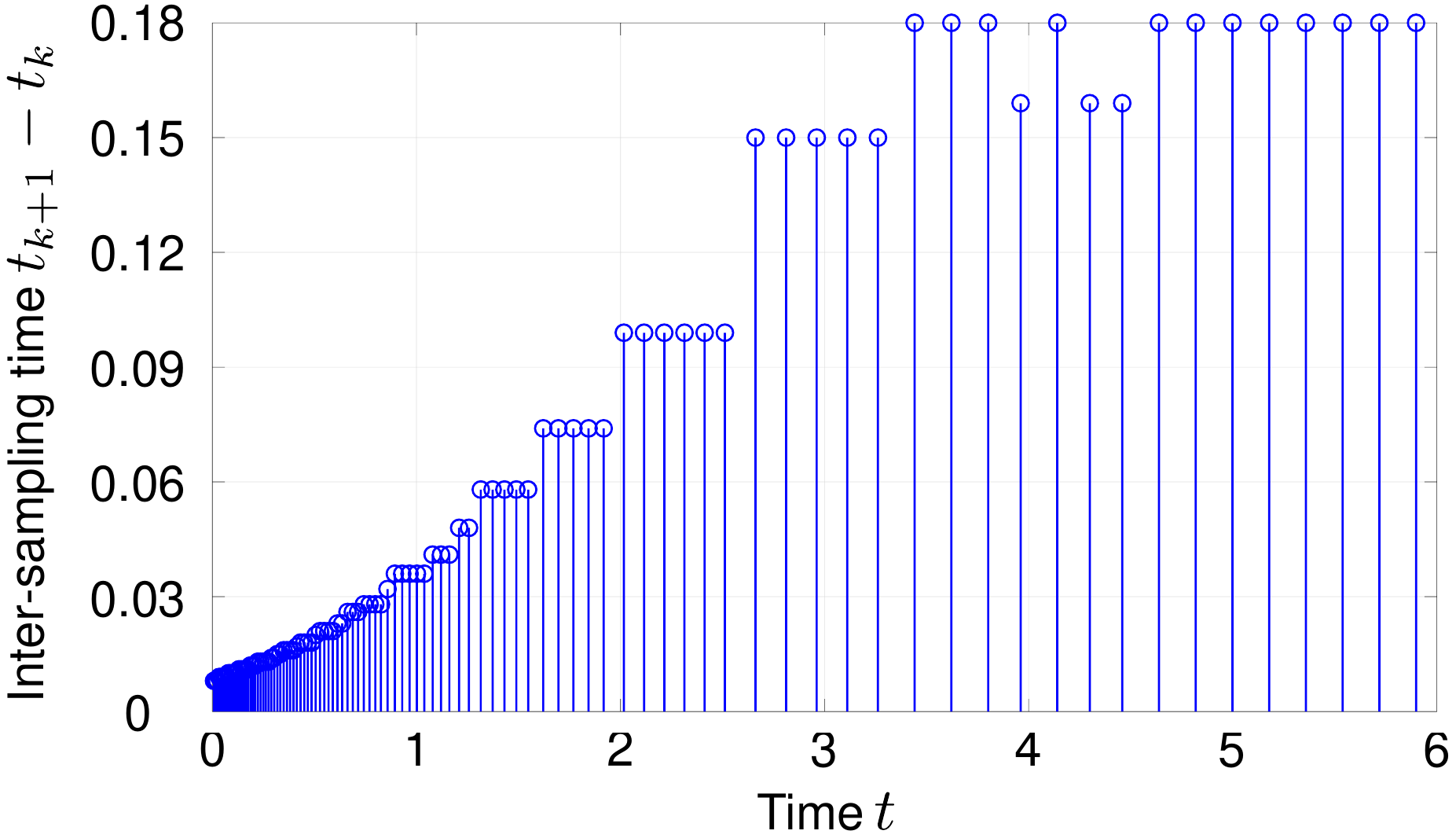}
	\caption{Inter-sampling times under zooming quantization.}
	\label{fig:ie_zo}
\end{figure}

\begin{figure}[t]
	\centering
	\includegraphics[width = 9cm]{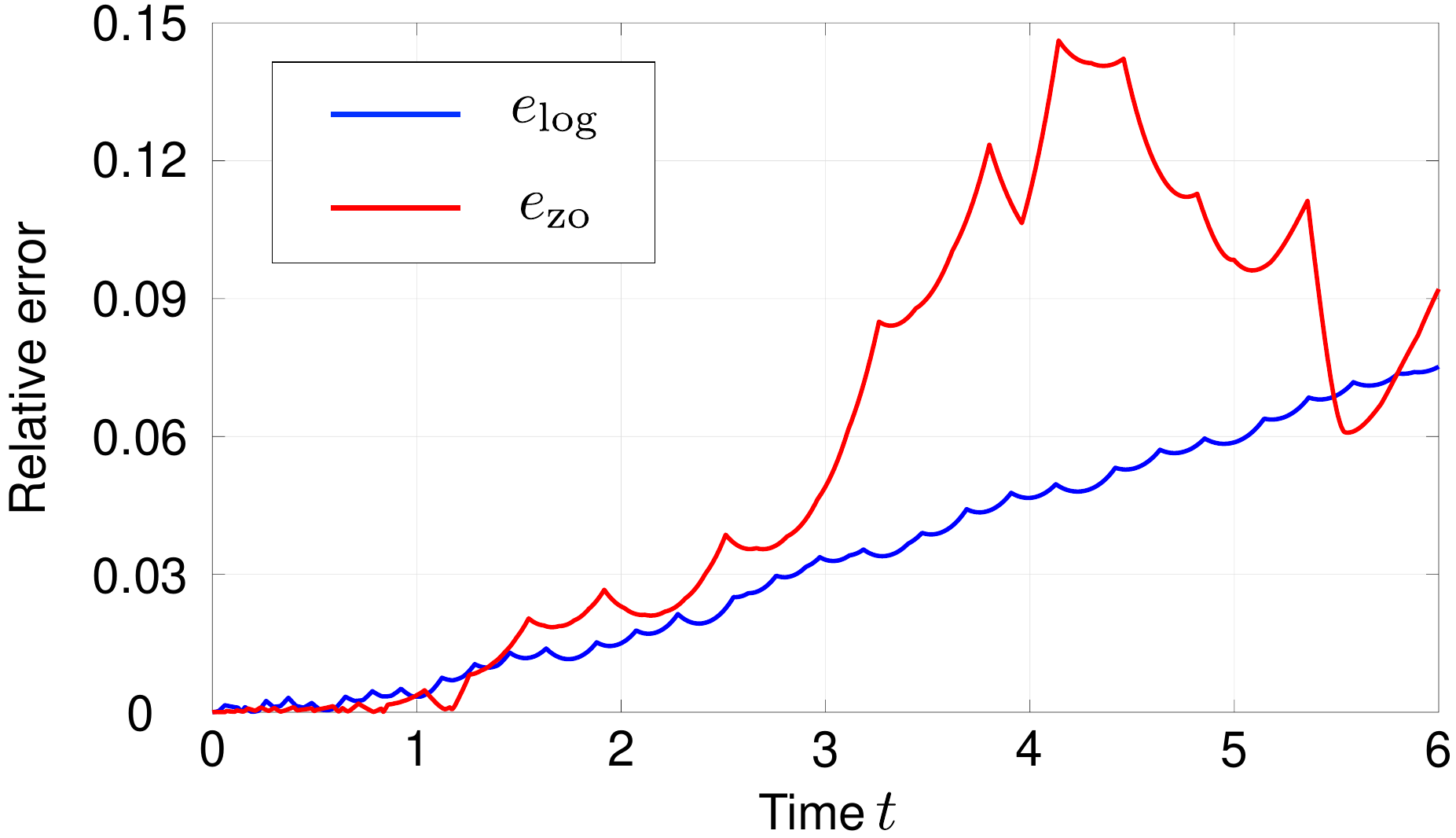}
	\caption{Comparison of relative errors.}
	\label{fig:relative_error}
\end{figure}

\subsection{Comparison of relative errors}
In Fig.~\ref{fig:relative_error}, we 
compare the cases of logarithmic quantization and zooming quantization
with respect to the relative error of the state.
Let $x_{\log,1}$ and $x_{\log,2}$ denote the first and 
second elements of the state of the self-triggered control system
with logarithmic quantization, whose parameters are as in Table~\ref{tab:log_parameters}. Then the relative error of the state under 
logarithmic quantization is defined by
\begin{align*}
	e_{\log}(t)
	\coloneqq \frac{
		\sqrt{
			\sum_{i=1}^2
			|x_{\log,i}(t) - x_{\text{ideal},i}(t)|^2
	} }
	{\sqrt{
			\sum_{i=1}^2
			| x_{\text{ideal},i}(t)|^2
	}}
\end{align*}
for $t \geq 0$. The relative error $e_{\text{zo}}$ is defined in the same way
for the state under zooming quantization, where the parameters are  as
in Table~\ref{tab:zoom_parameters}.
In Fig.~\ref{fig:relative_error},
the blue and red lines correspond to $e_{\log}$
and $e_{\text{zo}}$, respectively.
We see that 
$e_{\log}$ grows linearly, while
$e_{\text{zo}}$ increases rapidly
on the interval $[3,4]$. 
Under logarithmic quantization, the quantization error is bounded by a constant multiple of the state norm; see the inequalities \eqref{eq:log_prop2} and \eqref{eq:log_prop1}.
On the other hand, the zoom parameter $\mu_k$
is updated such that its exponential rate of
decay is smaller than that of the state. As a result,
the relative quantization error may increase exponentially
in the zooming quantization case.
Thus, $e_{\text{zo}}$  has
a growth behavior different from 
$e_{\log}$.

\section{Conclusion}
\label{sec:conclusion}
We studied the problem of stabilizing nonlinear systems under quantization and 
self-triggered sampling.
In the closed-loop system we consider,  only the quantized data of the state
are available to the controller and the STM.
The key assumption for stabilization 
is that the closed-loop system is contracting 
in the ideal case without quantization or self-triggered sampling.
First, we presented a self-triggered control scheme that establishes,
under logarithmic quantization, the exponential
convergence of all state trajectories starting in a given region.
Next, we proposed a co-design method for zooming quantization and self-triggered 
sampling to achieve stabilization. In both cases of logarithmic quantization and zooming quantization,
the proposed STMs estimate the measurement error by
predicting the future state trajectory
from the quantized state.
We also discussed the assumptions of the proposed methods 
for Lur'e systems.

There are still some open problems in quantized self-triggered control for 
nonlinear systems.
Since the system model is used for state prediction,
the next sampling time cannot be computed correctly in the presence of 
disturbances and model uncertainties. 
Therefore, it would be beneficial to
extend the techniques presented here to
uncertain systems with disturbances. In applications to networked control systems, 
it is also important to explicitly 
address the robustness of the proposed control schemes with respect to transmission delays.
Other future research directions include finding norms that
lead to the optimal choice of parameters for quantization and self-triggered
sampling.

\appendix
\noindent\hspace{1em}{\textit{Proof of Lemma~\ref{lem:decay_bound}.} }
Since 
$
w(t) <  1
$
for all $t \in \mathbb{R}_{>0}$,
the definition of $W$ gives 
$
\gamma = W(\tau_{\max}) > 0.
$

To prove the inequality \eqref{eq:f_tmax_bound},
we show that the function $W$ 
is monotonically decreasing on $\mathbb{R}_{>0}$.
Let $t \in \mathbb{R}_{>0}$. 
Since
\[
W'(t) = \frac{1}{t^2}
\left( - \frac{w'(t)}{w(t)}t +  \log w(t)   \right),
\]
it follows that 
\begin{equation}
	\label{eq:app_equiv1}
	W'(t) < 0 \quad \Leftrightarrow \quad \Lambda(t) := -\frac{w'(t)}{w(t)} t + \log w(t) < 0.
\end{equation}
Moreover,
\[
\Lambda'(t) = \frac{t\big(w'(t)^2 - w(t) w''(t)\big)}{w(t)^2},
\]
and hence
\begin{equation}
	\label{eq:app_equiv2}
	\Lambda'(t) < 0 \quad \Leftrightarrow \quad w'(t)^2 - w(t) w''(t) <0.
\end{equation}
We have
\[
w'(t) = -c(1-\varepsilon ) e^{-c t}\quad  \text{and} \quad 
w''(t) = c^2(1-\varepsilon ) e^{-c t}.
\]
From $0 < \varepsilon < 1$, we obtain
\[
w'(t)^2 - w(t) w''(t) = -\varepsilon (1-\varepsilon )c^2 e^{-2c t} <0,
\]
and therefore $\Lambda'(t) < 0$ by \eqref{eq:app_equiv2}.
Since $w(0) = 1$, we have
$\Lambda(0) = 0.
$
This yields $\Lambda(t) < 0$.
Hence, $W$ is monotonically decreasing on $\mathbb{R}_{>0}$ by
\eqref{eq:app_equiv1}.

From the monotonic decreasing property of $W$, we obtain
$W(t) \geq W(\tau_{\max})$ for all $t \in (0,\tau_{\max}]$,
which implies that
\[
\log w(t)\leq -\gamma t
\]
for all $t \in [0,\tau_{\max}]$. Thus, 
the inequality \eqref{eq:f_tmax_bound} holds.
\hspace*{\fill} $\blacksquare$

\end{document}